\documentclass[a4paper,oneside,portrait,12pt]{AmsArt}

\usepackage[margin=1in]{geometry}

%\pdfoutput=1

%\linespread{1.6}

\usepackage {graphicx}
\usepackage{amsfonts}
\usepackage{amsthm}
\usepackage{amsmath}
\usepackage{amsfonts}
\usepackage{latexsym}
\usepackage{amssymb}
\usepackage{epsfig,color}
\usepackage{graphicx, amssymb}

\usepackage{hyperref}
\hypersetup{
  bookmarks=true,
}

\newcommand{\A}{\mathcal{A}_{reg}}
\newcommand{\M}{\mathcal{M}_{reg}}

\newtheorem{theorem}{Theorem}[]

\newtheorem{corollary}[theorem]{Corollary}
\newtheorem{proposition}[theorem]{Proposition}
\newtheorem{remark}[theorem]{Remark}

%[teo] sets the same counter for all of them to have consecutive numbers.

\theoremstyle{definition}

\newtheorem{definition}{Definition}[section]

%\numberwithin{equation}{section}

\title{Monodromy of the $SL_{2}$ Hitchin fibration}
\author{Laura P. Schaposnik }

\address{Mathematical Institute, 24-29 St. Giles', Oxford, UK  OX1 3LB.}
\email{schaposnik@maths.ox.ac.uk}

%\thanks{This work was funded by the Oxford University Press through a Clarendon Award, and by New College, Oxford.}

\date{\today}

\begin{document}

\markboth{Laura P. Schaposnik}
{Monodromy of the $SL_{2}$ Hitchin fibration}

%%%%%%%%%%%%%%%%%%%%% Publisher's Area please ignore %%%%%%%%%%%%%%
%\catchline{}{}{}{}{}
%%%%%%%%%%%%%%%%%%%%%%%%%%%%%%%%%%%%%%%%%%%%%%%%%%%%%%%%%%%%%%%%%%%

%\title{Monodromy of the $SL_{2}$ Hitchin fibration}

%\author{Laura P. Schaposnik}

%\address{Mathematical Institute, 24-29 St. Giles', Oxford, UK  OX1 3LB. \\ schaposnik@maths.ox.ac.uk}

\maketitle

\begin{abstract}
We calculate the monodromy action on the mod 2 cohomology for $SL(2,\mathbb{C})$ Hitchin systems and give an application of our results in terms of the moduli space of semistable $SL(2,\mathbb{R})$-Higgs bundles. 
\end{abstract}

%\keywords{Hitchin fibration, Higgs bundles, representation theory, real forms}

%\ccode{14D20, 14D21, 20C33}

\section{Introduction}	

Let $\Sigma$ be a Riemann surface, and denote by $K$ its  canonical bundle. An $SL(2,\mathbb{C})$-Higgs bundle as defined by  Hitchin \cite{N1} and  Simpson \cite{simpson} is given by a pair $(E,\Phi)$, where $E$ is a rank 2 holomorphic vector bundle with ${\rm det}(E)=\mathcal{O}$, and the Higgs field $\Phi$ is a   section of ${\rm End}_{0}E\otimes K$, for ${\rm End}_{0}E$   the bundle of traceless endomorphisms. A Higgs bundle is said to be semistable if for any subbundle $F\subset E$ such that $\Phi(F)\subset F\otimes K$ one has ${\rm deg}(F)\leq 0$. When ${\rm deg}(F)< 0$ we say the pair is stable.

Considering  the moduli space $\mathcal{M}$ of $S$-equivalence classes of semistable $SL(2,\mathbb{C})$-Higgs bundles and the map $\Phi \mapsto {\rm det}(\Phi)$, one may define the so-called Hitchin fibration \cite{N1}:
\begin{eqnarray}h:\mathcal{M}&\rightarrow& \mathcal{A}:=H^{0}(\Sigma,K^{2}).\nonumber
%\\ (V,\Phi)&\mapsto& a_{1}:=-{\rm det}(\Phi)\nonumber
\end{eqnarray}
The moduli space $\mathcal{M}$ is homeomorphic to the moduli space of reductive representations of the fundamental group of $\Sigma$ in $SL(2,\mathbb{C})$ via non-abelian Hodge theory \cite{cor},\cite{donald},\cite{N1},\cite{simpson}. 
The  involution on $SL(2,\mathbb{C})$ corresponding to the real form $SL(2,\mathbb{R})$ defines an antiholomorphic involution on the moduli space of representations which, in the Higgs bundle complex structure, is the holomorphic involution $\Theta:(E,\Phi)\mapsto (E,-\Phi)$. In particular, the isomorphism classes of stable Higgs bundles fixed by the involution $\Theta$ correspond to $SL(2,\mathbb{R})$-Higgs bundles  $(E=V\oplus V^{*}, \Phi)$, where $V$ is a line bundle on $\Sigma$, and the Higgs field $\Phi$ is given by
\[\Phi=\left(\begin{array}
              {cc}
0&\beta\\\gamma&0
             \end{array}
\right)\] 
for $\beta:V^{*}\rightarrow V\otimes K$ and $\gamma:V\rightarrow V^{*}\otimes K$. We shall denote by $\mathcal{M}_{SL(2,\mathbb{R})}$ the moduli space of $S$-equivalence classes of semistable $SL(2,R)$-Higgs bundles.

Let $\M$ be the regular fibres of the Hitchin map $h$, and   $\A$ be the regular locus of the base, which is given by quadratic differentials with simple zeros. From \cite {N1} one knows that the smooth fibres are tori of real dimension $6g-6$. There is a section of the fibration fixed by $\Theta$ and this allows us to identify  each fibre with a Prym variety in Section \ref{sec:fib}. The involution $\Theta$ leaves invariant ${\rm det}(\Phi)$ and so defines an involution on each fibre. 

 In Section \ref{sl} we show that the fixed points of $\Theta$ become the elements of order 2 in the abelian variety. 
As a consequence,  the points corresponding to $SL(2,\mathbb{R})$-Higgs bundles give a finite covering of an open set in the base and thus the natural way of understanding the topology of the moduli space of $SL(2,\mathbb{R})$-Higgs bundles is via the monodromy of the covering, the action of the fundamental group as permutations of the fibres. These elements of order 2 can be canonically identified with those flat line bundles over $\Sigma$ which have holonomy in $\mathbb{Z}_{2}$, and which are invariant by the involution $\Theta$. Hence, we can determine the monodromy by looking at the action on mod 2 cohomology.

In Section 4 we recall the construction of the Gauss-Manin connection associated to the Hitchin fibration, and in Section \ref{sec:copeland} we give a short account of the combinatorial study of the monodromy of this connection which appears in \cite{cope1}.                                                                                                                                                                            
 In Sections \ref{sec:group1}-\ref{sec:group3} we study the action on the cohomology of the fibres, and obtain our main result:

\begin{theorem}
 The monodromy group $G\subset GL(6g-6,\mathbb{Z}_{2})$ on the first mod 2 cohomology of the fibres of the Hitchin fibration is given by the group of matrices   of the form
\begin{eqnarray}
\left(\begin{array} {c|c}
I_{2g}&A\\\hline
0&\pi
                \end{array}
\right),
\end{eqnarray}
\noindent  where  \begin{itemize}
                   \item $\pi$ is the quotient action on $\mathbb{Z}_{2}^{4g-5}/(1,\cdots,1)$ induced by the permutation action of the symmetric group $S_{4g-4}$ on $\mathbb{Z}_{2}^{4g-5}$.
\item  $A$  is any $(2g)\times (4g-6)$ matrix with entries in $\mathbb{Z}_{2}$.
                  \end{itemize}

\end{theorem}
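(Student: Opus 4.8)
The plan is to understand the monodromy of the Hitchin fibration $h: \M \to \A$ by relating it to the monodromy of the associated spectral curve family. Over a point $a \in \A$ given by a quadratic differential with simple zeros, the spectral curve $\Sigma_a \subset \mathrm{Tot}(K)$ is a double cover of $\Sigma$ branched over the $4g-4$ zeros of $a$, and each Hitchin fibre is the Prym variety $\mathrm{Prym}(\Sigma_a/\Sigma)$. The mod $2$ cohomology $H^1(\mathrm{Prym}(\Sigma_a), \mathbb{Z}_2)$ thus sits inside $H^1(\Sigma_a, \mathbb{Z}_2)$, and the monodromy representation is the action of $\pi_1(\A)$ on this space induced by transporting the branch points. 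So first I would set up the exact sequence relating $H^1(\Sigma, \mathbb{Z}_2)$, $H^1(\Sigma_a, \mathbb{Z}_2)$, and the relative term coming from the branch locus, and identify the Prym piece as a subquotient; this is where the block structure $\left(\begin{smallmatrix} I_{2g} & A \\ 0 & \pi\end{smallmatrix}\right)$ originates, with the $I_{2g}$ block reflecting the pullback $H^1(\Sigma, \mathbb{Z}_2) \hookrightarrow H^1(\Sigma_a, \mathbb{Z}_2)$ on which monodromy acts trivially, and the $\pi$ block capturing the permutation of branch points.

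**Identifying the two blocks.**

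For the lower-right block, I would invoke the combinatorial description of the monodromy recalled in Section \ref{sec:copeland} (following \cite{cope1}): the fundamental group of the configuration space of $4g-4$ points on $\Sigma$ surjects, via its action on branch points, onto the symmetric group $S_{4g-4}$, and the induced action on the relevant mod $2$ homology is exactly the permutation action on $\mathbb{Z}_2^{4g-4}$, which descends to the quotient $\mathbb{Z}_2^{4g-5} = \mathbb{Z}_2^{4g-4}/(\text{diagonal or sum-zero constraint})$ — the constraint coming from the fact that the $4g-4$ branch points, viewed as a divisor, have a fixed image in $H^1(\Sigma,\mathbb{Z}_2)$ or must sum to zero in the covering data. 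That one relation gives the $4g-5$. This identifies $\pi$. For the upper-right block $A$, the content is that the extension is non-split as a representation and moreover that \emph{every} value of $A$ is achieved: monodromy loops that permute branch points trivially (lying in the kernel of the map to $S_{4g-4}$) can still act non-trivially by adding a branch-point class to a pulled-back class from $\Sigma$. I would exhibit enough such loops — e.g. loops dragging one branch point around a non-separating cycle of $\Sigma$ — to generate all of $\mathrm{Hom}(\mathbb{Z}_2^{4g-6}, \mathbb{Z}_2^{2g}) = \mathrm{Mat}_{2g \times (4g-6)}(\mathbb{Z}_2)$, using the computations of Sections \ref{sec:group1}--\ref{sec:group3}.

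**The main obstacle.**

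The hardest part will be proving that the monodromy group is \emph{exactly} this set of matrices and not a proper subgroup — both the surjectivity onto all of the permutation quotient $\pi$ (closure: ensuring no hidden invariant survives, i.e. that $S_{4g-4}$ and not merely a subgroup is realized, and that the mod-$2$ reduction does not collapse further) and, more delicately, the surjectivity onto \emph{all} $(2g)\times(4g-6)$ matrices $A$. The upper bound (that monodromy preserves the block form, hence lands in this group) follows from the invariance of the pulled-back classes from $\Sigma$ and the fact that monodromy commutes with the covering structure; but the lower bound requires explicitly producing generators. I would organize this as: (i) show containment in the group via the filtration $H^1(\Sigma,\mathbb{Z}_2) \subset (\text{Prym mod }2)$; (ii) compute the images of a spanning set of loops — the "braid-type" loops giving $\pi$, and the "sliding" loops giving arbitrary $A$ — and check by a rank count that together they generate the full group. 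The rank count and the verification that the $A$-block loops are independent modulo the $\pi$-block is the computational crux, and this is presumably exactly what Sections \ref{sec:group1}--\ref{sec:group3} carry out in detail; here I would cite those computations rather than reproduce them. A final remark: one should check the edge cases in low genus (e.g. $g=2$, where $4g-6=2$) are consistent with the general formula, which they are.
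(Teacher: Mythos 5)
Your overall architecture does line up with the paper's: the invariant $2g$-dimensional block is the pullback of $H^{1}(\Sigma,\mathbb{Z}_{2})$ (in the paper's combinatorial model, the cycle space $Z_{1}/\langle x_{1},x_{2},x_{4}\rangle$, with the geometric interpretation via the $2^{2g}$ sections coming from square roots of $K$); the lower block is the permutation action on the $4g-4$ branch points restricted to the even-sum subspace $B_{0}\cong\mathbb{Z}_{2}^{4g-5}$ and then reduced modulo $(1,\dots,1)=\partial x_{5}$; and everything is driven by Copeland's edge generators $\sigma_{e}(x)=x+\langle x,e\rangle e$ from Theorem \ref{copeland}. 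The identification of the two blocks and of the computational crux (exactness of the group, i.e.\ realizing all of $S_{4g-4}$ and all matrices $A$) is correct.

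The gap is in how you handle the $A$-block. You assert that loops in the kernel of $G_{1}\to S_{4g-4}$ generate all of ${\rm Hom}(\mathbb{Z}_{2}^{4g-6},\mathbb{Z}_{2}^{2g})$ and then propose to ``cite the computations of Sections \ref{sec:group1}--\ref{sec:group3}'' --- but those sections \emph{are} the proof, so as a blind argument this is circular, and in deferring to them you miss the one genuinely delicate step they contain. At the level of $C_{1}=\mathbb{Z}_{2}[E]$ the kernel $N$ is \emph{not} the full set of such matrices: Proposition \ref{muy} shows the rows of $A$ indexed by $\Delta_{e}$ for $e\in E_{0}-\{u_{5},l_{6}\}$ are forced to have an even number of $1$'s, the row for $x_{4}$ vanishes, and only the two rows for $\Delta_{u_{5}},\Delta_{l_{6}}$ are unconstrained; reading the theorem off at this stage would give a proper subgroup. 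The full matrix group emerges only after passing to the quotient $P[2]=C_{1}/\langle x_{1},x_{2},x_{4},x_{5}\rangle$, where the relations (\ref{sum1}) rewrite $\Delta_{l_{6}}$ and $\Delta_{u_{5}}$ as sums of the remaining boundaries, and it is precisely the two unconstrained rows, redistributed through these identities, that destroy the parity condition on every row of $A$. Your proposed alternative mechanism (``dragging one branch point around a non-separating cycle'') is never made precise and is not what does the work in the paper. A smaller point: your closing claim that $g=2$ is consistent with the general formula is unsupported --- the combinatorial model requires genus greater than $2$, and the paper explicitly notes that $\pi_{1}(\A)$ has a different structure when $g=2$, so that case is outside the scope of this argument.
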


  In Section \ref{sec:orbit} we study the orbits of the monodromy group and,  as an application,  we give a new proof of the following result  (\cite{N1},\cite{goldman2}) in Section \ref{sec:component}:

\begin{corollary}  The number of connected components of the moduli space of semistable $SL(2,\mathbb{R})$-Higgs bundles  is $
2\cdot 2^{2g}+2g-3
$. 
\end{corollary}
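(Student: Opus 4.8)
The plan is to realise the connected components of $\mathcal{M}_{SL(2,\mathbb{R})}$ as the orbits of the monodromy group $G$ of the Theorem above, acting on the fibre of a naturally associated covering space, and then to read the count off the explicit matrix form of $G$.

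First I would reduce to the regular locus. The degree $d:=\deg V$ of the line subbundle is a locally constant invariant, so $\mathcal{M}_{SL(2,\mathbb{R})}=\bigsqcup_{d}\mathcal{M}^{d}$ is a disjoint union of open and closed subspaces, and the Milnor--Wood inequality (semistability applied to $V$ together with the nonvanishing of $\beta$ or of $\gamma$) forces $|d|\le g-1$. I would then check that every connected component of each $\mathcal{M}^{d}$ meets $h^{-1}(\A)$: given $q\in\A$ with set of simple zeros $Z(q)$, $\#Z(q)=4g-4$, choose a subset $S\subseteq Z(q)$ of size $2d+2g-2$, a square root $V$ of $\mathcal{O}_{\Sigma}(S)\otimes K^{-1}$, and let $\beta,\gamma$ be the sections vanishing precisely on $S$ and on $Z(q)\setminus S$; this produces an $SL(2,\mathbb{R})$-Higgs bundle with $\det\Phi=q$ and subbundle of degree $d$, automatically stable since it sits in a regular fibre. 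For $|d|=g-1$ one of $\beta,\gamma$ can be taken to be any nonzero quadratic differential, so every component of $\mathcal{M}^{g-1}$ and of $\mathcal{M}^{-(g-1)}$ — each labelled by a theta characteristic — occurs this way; for $|d|<g-1$ one invokes density of the regular part inside $\mathcal{M}^{d}$, its complement having positive codimension. Hence $\pi_{0}(\mathcal{M}_{SL(2,\mathbb{R})})=\pi_{0}\bigl(\mathcal{M}_{SL(2,\mathbb{R})}\cap h^{-1}(\A)\bigr)$.

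Next I would set up the covering. By Section~\ref{sl}, over $\A$ the $\Theta$-fixed stable $SL(2,\mathbb{C})$-Higgs bundles are exactly the $2$-torsion points of the Prym variety, and promoting such a point to an $SL(2,\mathbb{R})$-Higgs bundle is the additional choice of which eigenline of $\Phi$ is $V$ — equivalently, of the subset $S=Z(\beta)\subseteq Z(q)$. Thus $\mathcal{M}_{SL(2,\mathbb{R})}\cap h^{-1}(\A)\to\A$ is a covering map with fibre over $q$ equal to $F_{q}=\bigsqcup_{d}F^{d}_{q}$, where
\[
F^{d}_{q}=\bigl\{(S,V)\ :\ S\subseteq Z(q),\ \#S=2d+2g-2,\ V^{\otimes 2}\cong\mathcal{O}_{\Sigma}(S)\otimes K^{-1}\bigr\};
\]
this has cardinality $2^{6g-5}$ and double-covers the $2$-torsion of the Prym. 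Its connected components are the orbits of $\pi_{1}(\A)$ on $F_{q}$, an action that preserves each $F^{d}_{q}$. The bridge to the Theorem is the dictionary: the permutation block $\pi$ is the monodromy of $\A$ permuting $Z(q)$, acting on even subsets modulo complementation; the block $I_{2g}$ records that the monodromy is trivial on $\mathrm{Jac}(\Sigma)[2]=H^{1}(\Sigma,\mathbb{Z}_{2})$, over which the set of square roots $V$ is a torsor, $\Sigma$ itself not moving; and the block $A$, \emph{arbitrary} by the Theorem, is precisely the $\mathrm{Jac}(\Sigma)[2]$-twist that $V$ acquires as $S$ moves, so that for $S\neq\emptyset$ the monodromy realises every translation of the torsor of square roots, whereas for $S=\emptyset$ or $S=Z(q)$ it fixes $V$.

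Finally I would count the orbits on $F_{q}$. If $|d|=g-1$, then $S$ is forced to be $\emptyset$ or $Z(q)$, so the $A$-block and $\pi$ act trivially and, by the $I_{2g}$-block, so does the monodromy on the $2^{2g}$ square roots; thus $F^{g-1}_{q}$ and $F^{-(g-1)}_{q}$ are each a set of $2^{2g}$ fixed points, contributing $2\cdot 2^{2g}$ components. If $|d|\le g-2$, then $\#S=2d+2g-2\in[2,4g-6]$, so $S$ is a nonempty proper even subset; the permutation monodromy is transitive on all such subsets, and for fixed $S$ the $A$-block moves $V$ through every translate (here $S\neq\emptyset$), so $F^{d}_{q}$ is a single orbit, contributing one component for each of the $2(g-2)+1=2g-3$ admissible values of $d$. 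Summing gives $\#\pi_{0}(\mathcal{M}_{SL(2,\mathbb{R})})=2\cdot 2^{2g}+2g-3$. The main obstacle I anticipate is the dictionary of the third step — identifying the combinatorial $(S,V)$-model with the linear-algebra model of the Theorem, and in particular confirming that the block $A$ accounts for exactly the $\mathrm{Jac}(\Sigma)[2]$-twist of $V$ when $S$ is nonempty; a secondary point requiring care is the density statement of the first step near the strictly semistable and non-regular loci.
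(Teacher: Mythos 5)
Your route is genuinely different from the paper's in how it organises the final count, and it is essentially sound in outline. The paper first counts the orbits of $G$ on $P[2]$ itself, obtaining $2^{2g}+g-1$ components of the fixed-point set of $\Theta$ inside $\mathcal{M}$ (i.e.\ of $\mathcal{M}_{SL(2,\mathbb{R})}$ up to $SL(2,\mathbb{C})$-equivalence), certifies that this upper bound is attained by matching it against the lower bound coming from the Euler class and the $2^{2g}$ Hitchin components, and only then doubles the components with $k\neq 0$ to pass from complex to real equivalence, via the decomposition $\mathcal{M}_{SL(2,\mathbb{R})}=\mathcal{M}^{0}\sqcup(\bigsqcup_{i}\mathcal{M}^{\pm(g-1),i})\sqcup(\bigsqcup_{k}\mathcal{M}^{\pm k})$. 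You instead count orbits once, directly on the double cover $F_{q}$ of $P[2]$ given by the $(S,V)$-data, where the deck transformation $(S,V)\mapsto(Z(q)\setminus S,V^{*})$ encodes exactly the real-versus-complex ambiguity; the invariant $d$ then splits $F_{q}$ canonically and the "count then double" step disappears. This is a cleaner packaging, and your cardinality check ($2^{4g-5}\cdot 2^{2g}=2^{6g-5}=2\cdot|P[2]|$) and the orbit counts ($2\cdot 2^{2g}$ fixed points for $|d|=g-1$, one orbit for each of the $2g-3$ values $|d|\le g-2$) agree with the paper's. The price is the dictionary you yourself flag: you must verify that the monodromy of $F_{q}\to\A$ is the lift of the group $G$ of the Theorem in which $\pi$ acts on honest subsets of $Z(q)$ (not subsets modulo complementation) and the $\mathbb{Z}_{2}^{2g}$ block is translation on the torsor of square roots; the first point is geometrically clear since $\pi_{1}(\A)\to S_{4g-4}$ physically permutes the zeros, and the second is the paper's closing Remark about the $2^{2g}$ sections, but neither is free.

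There is one genuine gap beyond the flagged dictionary. Your first step concludes $\pi_{0}(\mathcal{M}_{SL(2,\mathbb{R})})=\pi_{0}(\mathcal{M}_{SL(2,\mathbb{R})}\cap h^{-1}(\A))$ from the statement that every component meets the regular locus. That gives only surjectivity of $\pi_{0}(\mathcal{M}_{SL(2,\mathbb{R})}\cap h^{-1}(\A))\to\pi_{0}(\mathcal{M}_{SL(2,\mathbb{R})})$, hence only the upper bound: two distinct monodromy orbits could a priori be joined by a path passing over the discriminant locus or through the strictly semistable ($S$-equivalence) locus, where the covering-space argument says nothing. The paper closes this with the Milnor--Wood lower bound: the Euler class and, for $|k|=g-1$, the theta characteristic are locally constant on all of $\mathcal{M}_{SL(2,\mathbb{R})}$ and already separate $2\cdot 2^{2g}+2g-3$ classes, so the orbits cannot merge; it also needs the separate argument (Proposition \ref{part1}) that the strictly semistable points fall into the $k=0$ component rather than creating new ones. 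You have the needed invariants in hand ($d$, and the square root $V$ of $K$ when $|d|=g-1$), but you must say explicitly that they extend over the non-regular and strictly semistable loci and distinguish your $2\cdot 2^{2g}+2g-3$ orbits there; "density of the regular part" alone does not do this, and the density claim itself is what the paper's explicit deformation through the symmetric product $S^{2g-2-2d}\Sigma$ is supplying.
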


These connected components are known to be parametrized by the Euler class $k$ of the associated flat $\mathbb{RP}^{1}$ bundle. From our point of view this number $k$ relates to the orbit of a  subset of $1,2,\dots, 4g-4$ with $2g-2k-2$ points under the action of the symmetric group.

\section{The regular fibres of the Hitchin fibration }\label{sec:fib}

%\section{THE REGULAR FIBRES OF THE HITCHIN FIBRATION} 
\label{sec1}

Consider the Hitchin map $h:\mathcal{M}\rightarrow \mathcal{A}$ given by $(E,\Phi)\mapsto {\rm det}(\Phi)$. From \cite{N1} the map $h$ is proper and surjective, and its regular fibres are abelian varieties. Moreover, for any $a\in \mathcal{A}-\{0\}$ the fibre $\mathcal{M}_{a}$ is connected  \cite{goand}.  An isomorphism class   $(E,\Phi)$ in $\mathcal{M}$ defines  a spectral curve $\rho:S\rightarrow \Sigma$ in the total space of $K$ with equation
\[{\rm det}(\Phi-\eta I)= \eta^{2}+a=0,\]
where $a={\rm det}(\Phi)\in \mathcal{A}$, and $\eta$  the tautological section of $\rho^{*}K$. The curve $S$ is non-singular over $\A$, and the ramification points are given by the intersection of $S$ with the zero section.
Since there is a  natural involution $\tau(\eta)=-\eta$ on $S$,  one can define the Prym variety ${\rm Prym}(S,\Sigma)$ as the set of line bundles $M\in {\rm Jac}(S)$ which satisfy
\begin{eqnarray}\tau^{*} M\cong M^{*}.\label{P1}\end{eqnarray}
Following the ideas of Theorem 8.1 in \cite{N1}, and as seen in Section 2 of \cite{N3}, one has the following description of the regular fibres of the $SL(2,\mathbb{C})$ Hitchin fibration:

\begin{proposition} \cite{N1} The fibre $h^{-1}(a)$ of $\M$ for $a\in  A$  is biholomorphically equivalent to the ${\rm Prym}$ variety of the double covering $S$ of $\Sigma$ determined by the 2-valued differential $\sqrt{a}$.
 \label{fibras}  \end{proposition}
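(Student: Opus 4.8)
The plan is to identify a point of the fibre $h^{-1}(a)$ with a line bundle on the spectral curve $S$ via the standard BNR (Beauville–Narasimhan–Ramanan) spectral correspondence, and then to track how the $SL(2,\mathbb{C})$ conditions $\det E = \mathcal{O}$ and $\mathrm{tr}\,\Phi = 0$ translate into the Prym condition $\tau^* M \cong M^*$.

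First I would recall the spectral correspondence for $GL(2,\mathbb{C})$-Higgs bundles: given $(E,\Phi)$ with $\det(\Phi) = a$, the characteristic equation $\eta^2 + a = 0$ cuts out the curve $\rho : S \to \Sigma$ in the total space of $K$, and over $\A$ this $S$ is smooth (since $a$ has simple zeros, the branch points are simple). The cokernel of $\eta - \Phi$ acting on $\rho^*E$ is a line bundle $M$ on $S$, and conversely $E = \rho_* M$ with $\Phi$ induced by multiplication by the tautological section $\eta$; this gives a bijection between Higgs bundles in the fibre and line bundles $M$ on $S$ of the appropriate degree. The degree is pinned down by $\det(\rho_* M)$: using the Grothendieck–Riemann–Roch / pushforward formula one has $\det(\rho_* M) \cong \mathrm{Nm}(M) \otimes (\det \rho_* \mathcal{O}_S)$, and since $\rho_*\mathcal{O}_S = \mathcal{O}_\Sigma \oplus K^{-1}$ we get $\det \rho_*\mathcal{O}_S = K^{-1}$. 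Hence the constraint $\det E = \mathcal{O}$ becomes $\mathrm{Nm}(M) \cong K$.

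Next I would bring in the involution $\tau(\eta) = -\eta$. The traceless condition on $\Phi$ is what makes the equation $\eta^2 + a$ have no linear term, so $\tau$ acts on $S$; a short diagram chase with the defining exact sequence $0 \to \rho^*E \otimes \rho^*K^{-1} \xrightarrow{\eta-\Phi} \rho^* E \to M \to 0$ shows that applying $\tau^*$ and using $\tau^*\eta = -\eta$, $-\Phi \cong \Phi$ (up to the $GL(2)$-gauge, trace zero) identifies $\tau^*M$ with the cokernel of $-\eta-\Phi \cong \eta - \Phi$ twisted appropriately, which one computes to be $M^* \otimes \rho^* K \otimes (\text{ramification})$. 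Combining this with $\mathrm{Nm}(M) \cong K$ and the relation between $\mathrm{Nm}$, $\tau^*$ and the ramification divisor $R$ of $\rho$ (namely $\rho^* \mathrm{Nm}(M) \cong M \otimes \tau^* M$ and $\rho^* K \cong \mathcal{O}(R)$), the upshot is that after fixing the correct component of the Jacobian, $M$ satisfies exactly $\tau^* M \cong M^*$, i.e. $M \in \Pri(S,\Sigma)$. Translating along a fixed reference point (the section of the fibration) turns the torsor into the Prym variety itself; that the map is biholomorphic follows because both sides are smooth and the spectral correspondence is an isomorphism of moduli functors in families, so it is algebraic with algebraic inverse.

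The main obstacle I expect is the careful bookkeeping of line bundle twists and degrees: getting the ramification divisor $R$ and the factor $\det\rho_*\mathcal{O}_S = K^{-1}$ to appear with the right signs, so that the two conditions $\mathrm{Nm}(M)\cong K$ and "$\tau^*M \cong M^*$" are genuinely equivalent on the relevant component rather than off by a fixed twist. This is where one must be most careful; the conceptual content (spectral correspondence plus the observation that $\mathrm{tr}\,\Phi=0$ produces the involution $\tau$) is exactly as in Theorem 8.1 of \cite{N1} and Section 2 of \cite{N3}, which I would cite for the details once the skeleton above is in place.
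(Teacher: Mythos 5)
Your overall route is the same as the paper's: the BNR spectral correspondence identifies points of $h^{-1}(a)$ with line bundles on $S$, the determinant formula $\Lambda^{2}\rho_{*}L\cong{\rm Nm}(L)\otimes K^{-1}$ translates ${\rm det}\,E=\mathcal{O}$ into ${\rm Nm}(L)\cong K$, and twisting by $\rho^{*}K^{-1/2}$ moves this to the norm-zero locus, i.e.\ to the line bundles $M$ with $\tau^{*}M\cong M^{*}$. The paper only writes out the direction from ${\rm Prym}(S,\Sigma)$ to the fibre and cites Hitchin and Beauville--Narasimhan--Ramanan for the rest, so your fuller sketch of both directions is reasonable and the bookkeeping you worry about (the factor $\det\rho_{*}\mathcal{O}_{S}=K^{-1}$, the reference point $\rho^{*}K^{1/2}$ turning the torsor into the Prym itself) is exactly the right thing to track.

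One step in the middle is wrong as stated, however. You claim that applying $\tau^{*}$ to $0\to\rho^{*}E\otimes\rho^{*}K^{-1}\xrightarrow{\eta-\Phi}\rho^{*}E\to M\to 0$ and ``using $-\Phi\cong\Phi$ up to the $GL(2)$-gauge, trace zero'' lets you replace $-\eta-\Phi$ by $\eta-\Phi$. For a general traceless $\Phi$ it is \emph{not} true that $(E,-\Phi)\cong(E,\Phi)$: that isomorphism is precisely the fixed-point condition for the involution $\Theta$, i.e.\ the $SL(2,\mathbb{R})$ condition of Proposition \ref{ppp}, and it holds only at the points of order two in the fibre, not on the whole fibre. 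Moreover, if that identification were valid it would yield $\tau^{*}M\cong{\rm coker}(\eta-\Phi)=M$, i.e.\ $\tau^{*}M\cong M$ rather than the $M^{*}$ (up to twist) that you assert two lines later, so the step is internally inconsistent as well. The correct mechanism is either the adjugate identity $\eta+\Phi={\rm adj}(\eta-\Phi)$, which exchanges kernel and image and so produces the dual up to $\rho^{*}\det E$ and a $\rho^{*}K$ twist, or --- more simply, and already present in your write-up --- the norm relation $M\otimes\tau^{*}M\cong\rho^{*}{\rm Nm}(M)$ combined with ${\rm Nm}(M)\cong K$ and $\rho^{*}K\cong\mathcal{O}(R)$. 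Since that second route closes the argument on its own, the flawed diagram chase should simply be deleted; with it removed your proof is correct and coincides with the paper's.
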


To see this, given a line bundle $L$ on $S$ one may consider the rank two vector bundle $E:=\rho_{*}L$ and construct an associated Higgs bundle as follows.  For an open set $U\subset \Sigma$, multiplication by the tautological section $\eta$ gives a homomorphism
\begin{equation}\eta: H^{0}(\rho^{-1}(U),L)\rightarrow H^{0}(\rho^{-1}(U),L\otimes \rho^{*}K).\label{homo}\end{equation}
By the definition of a direct image sheaf, we have
\[H^{0}(U,E)=H^{0}(\rho^{-1}(U),L).\]
Hence, equation (\ref{homo}) defines the Higgs field
\begin{equation}
\Phi: E\rightarrow  E\otimes K,
\label{homo2}\end{equation}
which is stable since, by construction, it satisfies its characteristic equation $$\eta^{2}+a=0.$$ 

 From \cite{bobi} one has that
\[\Lambda^{2} E= {\rm Nm}(L)\otimes K^{-1}.\]
Recall that any line bundle $M$ on the curve $S$ such that  $\tau^{*}M\cong M^{*}$ satisfies ${\rm Nm}(M)=0$. Hence, for $K^{1/2}$ a choice of square root,   $M=L\otimes \rho^{*} K^{-1/2}$ is in the Prym variety and thus the induced vector bundle $E=\rho_{*}L$ has trivial determinant:
\[\Lambda^{2} E= {\rm Nm}(M\otimes \rho^{*}K^{1/2})\otimes K^{-1}=\mathcal{O}.\]

\label{sec2}

\section{$SL(2,\mathbb{R})$-Higgs bundles} \label{sl}

Recall from \cite{N2} and \cite{simpson88} that if a Higgs bundle $(E,\Phi)$ is stable and  ${\rm deg} ~E = 0$, then there is a unique unitary connection $A$ on $E$, compatible with the holomorphic structure, such that
\begin{eqnarray}
 F_{A}+ [\Phi,\Phi^{*}]=0~\in \Omega^{1,1}(\Sigma, {\rm End}E), \label{2.1}%\\
%d_{A}''\Phi&=&0 \label{2.1.bis}
\end{eqnarray}
 where $F_{A}$ is the curvature of the connection.   The equation (\ref{2.1}) and the holomorphicity condition
$
d_{A}''\Phi=0 \label{hit2}
$
 are known as the \textit{Hitchin equations}, where  $d_{A}''\Phi$ is the anti-holomorphic part of the covariant derivative of $\Phi$. 
Following \cite{N2}, in order to obtain an $SL(2,\mathbb{R})$-Higgs bundle, for $A$ the connection which solves the Hitchin equations (\ref{2.1}), one requires
\begin{eqnarray}
\nabla=\nabla_A+\Phi+ \Phi^{*}
\end{eqnarray}
to have  holonomy in  $SL(2,\mathbb{R})$.  For $\tau$ the anti-linear involution fixing $SL(2,\mathbb{R})$, requiring  $\nabla=\tau(\nabla)$ is equivalent to requiring \begin{eqnarray}
   \nabla_{A}=\tau(\nabla_{A})~{~\rm~and~}~
\Phi+\Phi^{*}=\tau(\Phi+\Phi^{*}).  \label{ecual1}                                                                                                                                                                     
\end{eqnarray}
For $\rho$ the compact form of $SL(2,\mathbb{C})$, one may consider the involution $\rho\tau$. In terms of $\rho\tau$, the equalities (\ref{ecual1}) are given by \begin{eqnarray}
\nabla_{A}=\rho\tau(\nabla_{A}) ~{~\rm~and ~}~
 \Phi-\rho(\Phi)%&=&\tau(\Phi-\rho(\Phi))\nonumber\\
                %&=&\tau(\Phi)-\rho\tau(\Phi)\nonumber\\
                =\rho\tau(\rho(\Phi)-\Phi). 
\end{eqnarray}
Hence,  $\nabla$ has holonomy in $SL(2,\mathbb{R})$ if 
$\nabla_{A}$ is invariant under  $\rho\tau$, and $\Phi$ anti-invariant, i.e., if 
the associated $SL(2,\mathbb{C})$-Higgs bundle $(E,\Phi)$ is fixed by the complex linear involution on the Higgs bundle moduli space
\[\Theta:(E,\Phi)\mapsto (E,-\Phi).\] 

Since we consider the moduli space $\mathcal{M}_{SL(2,\mathbb{R})}$ as sitting inside $\mathcal{M}$, two real representations may be conjugate in the space of complex representations but not by real elements.   
Thus, flat $SL(2,\mathbb{R})$ bundles can be equivalent as $SL(2,\mathbb{C})$ bundles but not as $SL(2,\mathbb{R})$ bundles.  The relation between Higgs bundles for real forms and fixed points of involutions has been studied by Garc\'ia-Prada in \cite{R12}.

In order to understand the fixed point set of $\Theta$, note that for a stable  $SL(2,\mathbb{C})$-Higgs bundle $(E,\Phi)$ whose isomorphism class is fixed by $\Theta$, there is an automorphism $\alpha:E\rightarrow E$ whose action by conjugation on $\Phi$ is given by\[\alpha^{-1}\Phi\alpha\mapsto -\Phi.\]
As $\alpha^{2}$ commutes with $\Phi$, by \cite{N1} it acts as $\lambda^{2}$ for some $\lambda\in \mathbb{C}^{*}$. Furthermore, $\alpha$ has constant eigenvalues $\pm \lambda$ and thus $E$ can be decomposed into the corresponding eigenspaces $E=V \oplus V^{*}$. Then, the Higgs field can be expressed as
\begin{equation}\Phi=\left(\begin{array}
              {cc}
a&\beta\\\gamma&-a
             \end{array}
\right)\in H^{0}(\Sigma, {\rm End}_{0}E \otimes K).\label{Higgs}\end{equation}
From this decomposition and the action of $\alpha$, one has that necessarily $\lambda=\pm i$ and $a=0$.
Then, the  involution $\Theta$ acts on $E$ via transformations of the form
\begin{equation}\pm \left(
\begin{array}{cc}
 i&0\\0&-i
\end{array}
\right),\label{sigma}\end{equation}
and, by stability, for $V$ of positive degree one has $\gamma\neq 0$.

\begin{proposition}
 The intersection of the subspace of the Higgs bundle moduli space $\M$ corresponding to $SL(2,\mathbb{R})$-Higgs bundles with the smooth fibres of the Hitchin fibration
\[h:~\M\rightarrow \A,\]
 is given by the elements of order two in those fibres. \label{ppp}
\end{proposition}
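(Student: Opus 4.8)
The plan is to translate everything into the spectral picture of Proposition~\ref{fibras} and to recognise $\Theta$ as the inversion map of the Prym variety. Fix $a\in\A$, let $\rho:S\to\Sigma$ be the (smooth) spectral curve with its involution $\tau(\eta)=-\eta$, and recall that $h^{-1}(a)$ is biholomorphic to $\Pri(S,\Sigma)$ via $M\mapsto(E_{M},\Phi_{M})$ with $E_{M}=\rho_{*}(M\otimes\rho^{*}K^{1/2})$ and $\Phi_{M}$ given by multiplication by $\eta$. Since $\det(-\Phi)=\det(\Phi)$ in rank $2$, the involution $\Theta:(E,\Phi)\mapsto(E,-\Phi)$ maps $h^{-1}(a)$ to itself, hence induces an involution of the abelian variety $\Pri(S,\Sigma)$; note in passing that the section of $h$ fixed by $\Theta$ mentioned in the introduction meets $h^{-1}(a)$ in the origin $M=\mathcal{O}$, consistently with what follows.

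The key step is to identify this induced involution. Put $L=M\otimes\rho^{*}K^{1/2}$, so $E_{M}=\rho_{*}L$. Then $\Theta(E_{M},\Phi_{M})=(E_{M},-\Phi_{M})$, and $-\Phi_{M}$ is multiplication by $-\eta=\tau^{*}\eta$. Since $\rho\circ\tau=\rho$, pullback by $\tau$ gives an isomorphism $\rho_{*}(\tau^{*}L)\cong\rho_{*}L$ intertwining multiplication by $\eta$ on the source with multiplication by $\tau^{*}\eta=-\eta$ on the target; concretely, a local eigensection of $\Phi_{M}$ for the eigenvalue $\eta(p)$ is an eigensection of $-\Phi_{M}$ for the eigenvalue $\eta(\tau(p))$, so the spectral line bundle of $(E_{M},-\Phi_{M})$ is $\tau^{*}L$. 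Hence $\Theta$ sends $M$ to the $M'$ with $M'\otimes\rho^{*}K^{1/2}=\tau^{*}L=\tau^{*}M\otimes\rho^{*}K^{1/2}$, that is $M'=\tau^{*}M$; and by (\ref{P1}) we have $\tau^{*}M\cong M^{*}=M^{-1}$ on $\Pri(S,\Sigma)$. Therefore $\Theta$ acts on $h^{-1}(a)$ as the inversion $M\mapsto M^{-1}$ of the abelian variety.

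To conclude, the fixed-point set of inversion on an abelian variety is precisely the subgroup of elements $M$ with $M^{2}=\mathcal{O}$, i.e. the elements of order two; so the $\Theta$-fixed points of $h^{-1}(a)$ are exactly the elements of order two in $h^{-1}(a)\cong\Pri(S,\Sigma)$. It then remains to identify these $\Theta$-fixed points with the locus of $SL(2,\mathbb{R})$-Higgs bundles inside $h^{-1}(a)$: every Higgs bundle in a regular fibre is stable (the spectral curve is smooth and irreducible and $L$ is a line bundle, as in Section~\ref{sec:fib}), and for a stable Higgs bundle of degree zero the discussion of Section~\ref{sl} shows that $\nabla=\nabla_{A}+\Phi+\Phi^{*}$ has holonomy in $SL(2,\mathbb{R})$ exactly when $(E,\Phi)$ is fixed by $\Theta$, in which case $E=V\oplus V^{*}$ with $\Phi$ off-diagonal, i.e. $(E,\Phi)$ is an $SL(2,\mathbb{R})$-Higgs bundle. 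Chaining the two identifications gives the statement.

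The step I expect to be the main obstacle is the middle one, namely checking carefully that $\Theta$ becomes the inversion $M\mapsto\tau^{*}M=M^{-1}$: one must keep track of the twist by $\rho^{*}K^{1/2}$, make sure the chosen origin of $\Pri(S,\Sigma)$ is the $\Theta$-fixed section point so that ``inversion'' is the right description, and verify that no strictly semistable, non-stable, Higgs bundles occur over $\A$, so that the equivalence ``stable and $\Theta$-fixed $\iff$ $SL(2,\mathbb{R})$-Higgs bundle'' from Section~\ref{sl} applies to every point of the fibre.
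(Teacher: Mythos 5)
Your proof is correct and follows essentially the same route as the paper: both translate $\Theta$ into the spectral picture, observe that $-\Phi$ corresponds to $\tau^{*}$ acting on the spectral line bundle, and combine the resulting fixed-point condition $\tau^{*}M\cong M$ with the Prym condition $\tau^{*}M\cong M^{*}$ to get $M^{2}\cong\mathcal{O}$. The only cosmetic difference is in the converse direction, where the paper explicitly decomposes $E=\rho_{*}L$ into $\tau$-invariant and anti-invariant parts $E_{+}\oplus E_{-}$ to exhibit the off-diagonal $SL(2,\mathbb{R})$ form, whereas you invoke the general $V\oplus V^{*}$ analysis of Section~\ref{sl}; both are valid.
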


\begin{proof} 
Recall from Proposition \ref{fibras} that any $SL(2,\mathbb{C})$-Higgs bundle $(E,\Phi)$ can be obtained by considering the direct image sheaf $\rho_{*}L=E$, for some line bundle $L$ satisfying $M=L\otimes \rho^{*}K^{-1/2}\in {\rm Prym}(S,\Sigma)$.  
From Section \ref{sec1},  the Higgs field $\Phi$ is obtained by multiplication by the tautological section $\eta$. Thus, the fixed points of the involution $\Theta$ correspond to fixed points of $\eta\mapsto -\eta$. Since the involution $\tau:\eta\mapsto -\eta$ acts trivially on the equation of $S$,  the fixed points correspond to the action on the Picard variety and thus $\tau^{*}M \cong M$. Furthermore, from Proposition \ref{fibras} and equation (\ref{P1}) the line bundle also satisfies  $\tau^{*}M \cong M^{*}$. Hence, $M^{2}\cong \mathcal{O}$ and so $M$ is contained in the set of elements of order 2 in the fibre.

Conversely, one can see that an element of order two in the fibres of the $SL(2,\mathbb{C})$ Hitchin fibration induces an $SL(2,\mathbb{R})$-Higgs bundle. 
Indeed, if $\tau^{*}M\cong M$, then $\tau^{*}L\cong L$ for $M=L\otimes \rho^{*}K^{-1/2}$. Thus,   for an invariant open set $\rho^{-1}(U)$ we may decompose the sections of $L$ into
\[H^{0}(\rho^{-1}(U),L)=H^{0}(\rho^{-1}(U),L)^{+}\oplus H^{0}(\rho^{-1}(U),L)^{-},\]   
where the upper indices $+$ and $-$ correspond, respectively, to the invariant and anti-invariant sections. By definition of direct image sheaf, there is a similar decomposition of the sections of $\rho_{*}L$ into
\[H^{0}( U,\rho_{*}L)=H^{0}(U,\rho_{*}L)^{+}\oplus H^{0}(U,\rho_{*}L)^{-}.\]  
Therefore,  we may write $\rho_{*}L=E$ as
\[E=E_{+}\oplus E_{-},\]
where $E_{\pm}$ are line bundles on $\Sigma$. Moreover, since $\Lambda^{2}E\cong \mathcal{O}$ one has $E_{+}\cong E_{-}^{*}$. 

Following Proposition \ref{fibras}, multiplication by the tautological section induces a Higgs field $\Phi: E_{+}\oplus E_{-} \mapsto (E_{+}\oplus E_{-} )\otimes K $. Since $\tau:\eta \mapsto -\eta$, the Higgs field interchanges the invariant and anti-invariant sections and thus maps $E_{+}\mapsto E_{-}\otimes K$ and $E_{-}\mapsto E_{+}\otimes K$. Hence, the induced pair $(E_{+}\oplus E_{-},\Phi)$ is an $SL(2,\mathbb{R})$-Higgs bundle.\end{proof}

Consider now the trivial line bundle  $\mathcal{O}\in {\rm Prym}(S,\Sigma)$. Since its invariant sections are functions on the base, its direct image is given by $\rho_{*}\mathcal{O}=\mathcal{O}\oplus K^{-1}$. Hence,  for $L=\rho^{*}K^{1/2}$ one has 
$$\rho_{*}L=\rho_{*}\rho^{*}K^{1/2}= K^{1/2}\otimes \rho_{*}\mathcal{O}=K^{1/2}\oplus K^{-1/2}.$$
It follows from Section \ref{sec2} that the line bundle $\mathcal{O}\in {\rm Prym}(S,\Sigma)$ has an associated Higgs bundle. 
Note that in this case $\gamma$ maps $K^{1/2}$ to $K^{-1/2}\otimes K=K^{1/2}$, and thus by stability it is nonzero everywhere. By using the $\mathbb{C}^{*}$ action $\gamma$ can be taken to be 1, obtaining a  family of Higgs bundles $(K^{1/2}\oplus K^{-1/2}, \Phi)$, where the Higgs  field $\Phi$ is constructed via Proposition \ref{fibras}: 
\[\Phi=\left(\begin{array}{cc}
              0&a\\1&0
             \end{array}
\right),~{~\rm~for~}~ a\in H^{2}(\Sigma,K^{2}).\]
This family of $SL(2,\mathbb{R})$-Higgs bundles, which defines an origin in the fibres of $\M$, is the Teichm\"uller space  (see \cite{N1} for details).

\begin{remark}
 Proposition \ref{ppp} can be shown to be true for the adjoint group of any split real form \cite{laura}.
\end{remark}

\begin{definition}\label{def:p}
 We denote by $P[2]$ the elements of order 2 in ${\rm Prym }(S,\Sigma)$,  for a fixed curve $S$.
\end{definition}

\begin{proposition} \label{prop:mono2}
The space $P[2]$ is equivalent to the mod 2 cohomology $H^{1}({\rm Prym}(S,\Sigma),\mathbb{Z}_{2})$.
\end{proposition}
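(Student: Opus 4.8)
\emph{Proof sketch.} The plan is to reduce the statement to a standard fact about complex tori and then to arrange the resulting identification so that it is compatible with the monodromy. By Proposition \ref{fibras} the fibre $h^{-1}(a)$, and hence the subset $P[2]$, lives inside the Prym variety $T:={\rm Prym}(S,\Sigma)$, a complex torus of dimension $3g-3$. Write $T=V/\Lambda$, where $\Lambda=H_{1}(T,\mathbb{Z})\cong\mathbb{Z}^{6g-6}$ is the period lattice and $V=\Lambda\otimes\mathbb{R}$ carries the complex structure. The elements of order two are then
\[
P[2]=T[2]=\tfrac{1}{2}\Lambda/\Lambda ,
\]
and multiplication by $2$ gives a canonical isomorphism $\tfrac{1}{2}\Lambda/\Lambda\cong\Lambda/2\Lambda=\Lambda\otimes\mathbb{Z}_{2}=H_{1}(T,\mathbb{Z}_{2})$. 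Since $H_{*}(T,\mathbb{Z})$ is torsion free we also have $H^{1}(T,\mathbb{Z})={\rm Hom}(\Lambda,\mathbb{Z})$, so by the universal coefficient theorem $H^{1}(T,\mathbb{Z}_{2})={\rm Hom}_{\mathbb{Z}_{2}}(H_{1}(T,\mathbb{Z}_{2}),\mathbb{Z}_{2})$. Thus $P[2]$ and $H^{1}(T,\mathbb{Z}_{2})$ are mutually dual $\mathbb{Z}_{2}$-vector spaces, each of dimension $6g-6$; already this yields an abstract isomorphism, and the remaining work is to make it natural.

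To upgrade the duality to a distinguished isomorphism I would use the polarisation. The principal polarisation of ${\rm Jac}(S)$ restricts to a polarisation on $T$, i.e.\ an alternating form on $\Lambda$; its reduction mod $2$ is the Weil pairing $P[2]\times P[2]\to\mathbb{Z}_{2}$, which under the canonical identification $P[2]\cong H_{1}(T,\mathbb{Z}_{2})$ above is the intersection pairing on $H_{1}(T,\mathbb{Z}_{2})$. When this pairing is non-degenerate it identifies $P[2]$ with $P[2]^{\vee}\cong H^{1}(T,\mathbb{Z}_{2})$, and since the monodromy acts through automorphisms of $T$ preserving the polarisation, the identification is equivariant, which is exactly what is needed for the matrix computations of Sections \ref{sec:group1}--\ref{sec:group3}. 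It is also worth recording the concrete description of $P[2]$ that makes the combinatorics of Section \ref{sec:copeland} visible: for $M\in{\rm Jac}(S)[2]$ the conditions $\tau^{*}M\cong M^{*}$ (membership in the Prym) and $M^{2}\cong\mathcal{O}$ together say simply $\tau^{*}M\cong M$, so $P[2]$ is canonically the $\tau^{*}$-fixed subspace of ${\rm Jac}(S)[2]=H^{1}(S,\mathbb{Z}_{2})$, on which the permutation action of the $4g-4$ branch points of $S\to\Sigma$ acts.

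The main obstacle is the canonicity, i.e.\ this last step: the double cover $S\to\Sigma$ is branched at the $4g-4$ simple zeros of $a$, so for $g\ge 2$ it is ramified at more than two points and the polarisation induced on $T$ is in general not principal; its elementary divisors then typically include even ones, the mod $2$ Weil pairing on $P[2]$ acquires a radical, and one does not get a self-duality for free. Two ways around this are (i) to work instead with the unimodular intersection form on all of $H_{1}(S,\mathbb{Z})$, restrict to its $\tau$-anti-invariant sublattice and keep track of the $2$-power index so as to recover a non-degenerate form on $H^{1}(T,\mathbb{Z}_{2})$, or (ii) simply to note that the monodromy representations on $H_{1}(T,\mathbb{Z}_{2})\cong P[2]$ and on $H^{1}(T,\mathbb{Z}_{2})$ are contragredient, so that computing either one determines the other, which is all that the sequel actually uses. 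Checking that the chosen isomorphism $P[2]\cong H^{1}(T,\mathbb{Z}_{2})$ intertwines the two monodromy actions, rather than merely matching dimensions, is the point on which I would spend the most care.
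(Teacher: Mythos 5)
Your first paragraph is exactly the paper's argument: write $\mathrm{Prym}(S,\Sigma)\cong\mathbb{R}^{6g-6}/\Lambda$ with $\Lambda\cong H_{1}(\mathrm{Prym}(S,\Sigma),\mathbb{Z})$, observe $P[2]=\tfrac{1}{2}\Lambda/\Lambda\cong\Lambda/2\Lambda\cong H_{1}(\mathrm{Prym}(S,\Sigma),\mathbb{Z}_{2})$, and conclude via $H^{1}(\mathrm{Prym}(S,\Sigma),\mathbb{Z}_{2})\cong\mathrm{Hom}(\Lambda,\mathbb{Z}_{2})\cong\Lambda/2\Lambda$. The paper stops there, content with this (non-canonical) equivalence of $\mathbb{Z}_{2}$-vector spaces of equal dimension, so your additional discussion of the Weil pairing, the non-principality of the induced polarisation, and equivariance is a thoughtful refinement but goes beyond what the paper's proof contains or the statement requires.
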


\begin{proof}
Recall that the Prym variety  is given by $$H_{1}({\rm Prym}(S,\Sigma), \mathbb{R})/H_{1}({\rm Prym}(S,\Sigma), \mathbb{Z}),$$ and thus for some lattice $\wedge$ we can write ${\rm Prym}(S,\Sigma)\cong \mathbb{R}^{6g-6}/\wedge$. In particular, one has $\wedge \cong H_{1}({\rm Prym}(S,\Sigma), \mathbb{Z})$. Hence, $P[2]$ is given by equivalence classes in $\mathbb{R}^{6g-6}$ of points $x$ such that $2x \in \wedge$, this is,  by $\frac{1}{2}\wedge$ modulo $\wedge$. Thus, since $\wedge$ is torsion free, \[P[2]\cong \wedge/2\wedge\cong H_{1}({\rm Prym}(S,\Sigma),\mathbb{Z}_{2}).\]
Moreover, $H^{1}({\rm Prym}(S,\Sigma),\mathbb{Z}_{2})\cong {\rm Hom}(H_{1}({\rm Prym}(S,\Sigma),\mathbb{Z}),\mathbb{Z}_{2})$ and so
\[H^{1}({\rm Prym}(S,\Sigma),\mathbb{Z}_{2})\cong {\rm Hom}(\wedge,\mathbb{Z}_{2})\cong \wedge/2\wedge\cong P[2],\]
whence proving the proposition. \end{proof}

\section{The monodromy action}\label{sec:mono}

The homologies of ${\rm Prym}(S,\Sigma)$ of the Hitchin fibration $\M\rightarrow \A$ carry a canonical connection, the \textit{Gauss-Manin connection}, and its holonomy is the \textit{monodromy action}. We shall recall here the construction of this connection and its holonomy.  

 Consider a fibration $p:Y \rightarrow B$ which is locally trivial, i.e. for any point $b\in B$ there is an open neighbourhood $U_{b}\in B$ such that $p^{-1}(U_{b})\cong U_{b}\times Y_{b}$, where $Y_{b}$ denotes the fibre at $b$.   The $n$th homologies of the fibres $Y_{b}$ form a locally trivial vector bundle over $B$, which we denote $\mathcal{H}_{n}(B)$. This bundle carries a canonical flat connection, the \textit{Gauss-Manin connection}.

To define this connection we  identify the fibres of $\mathcal{H}_{n}(B)$ at nearby points $b_{1},b_{2}\in B$, i.e., $H_{n}(Y_{b_{1}})$ and $H_{n}(Y_{b_{2}})$. Consider $N\subset B$ a contractible open set which includes $b_{1}$ and $b_{2}$.  The inclusion of the fibres $Y_{b_{1}}\hookrightarrow p^{-1}(N)$ and $Y_{b_{2}}\hookrightarrow p^{-1}(N)$ are homotopy equivalences,  and hence we obtain an isomorphism between the homology of a fibre over a point in a contractible open set  $N$ and $H_n(p^{-1}(N))$: \[H_{n}(Y_{b_{1}})\cong H_{n}(p^{-1}(N))\cong H_{n}(Y_{b_{2}}).\]
This means that the vector bundle $\mathcal{H}_n(B)$ over $B$  has a flat connection, the Gauss-Manin connection.   The \textit{monodromy} of $\mathcal{H}_n(B)$   is the holonomy of this connection, i.e.,   for a choice of base point $b_{0}$, a homomorphism $\pi_{1}(B)\rightarrow {\rm Aut}(H_{n}(Y_{b_{0}}))$.

As each regular fibre of $\M\rightarrow \A$ is a torus,  the monodromy action preserves the integral lattice $\wedge$ introduced in Proposition \ref{prop:mono2}, and for the base point $b_{0}$ defining the fixed curve $S$ in Definition \ref{def:p}, it is generated by the action of $\pi_{1}(\A)$ on $H^{1}( {\rm Prym}(S,\Sigma),\mathbb{Z})$. Hence,  there is an induced action on the quotient
\begin{eqnarray}
 P[2]=H_{1}({\rm Prym}(S,\Sigma), \mathbb{Z})/2\wedge.
\end{eqnarray}
The monodromy of the Hitchin fibration induces an action on the elements of order 2 in the regular fibres, which are equivalence classes of line bundles of order 2 invariant by the involution. This is precisely $P[2]$ and so we can calculate the monodromy by cohomological means.

\section{A combinatorial approach to monodromy for $SL(2,\mathbb{C})$}\label{sec:copeland}

 The generators and relations of the monodromy action of the  $SL(2,\mathbb{C})$ Hitchin fibration for hyperelliptic surfaces were studied from a combinatorial point of view by Copeland in \cite{cope1}. We shall dedicate this section to give a short account of \cite{cope1}. His main result for hyperelliptic curves, which may be extended by \cite{walker}  to any compact Riemann surface, is the following:

\begin{theorem}[\cite{cope1,walker}]
 To each compact Riemann surface $\Sigma$ of genus greater than 2, one may associate a graph $\check{\Gamma}$ with edge set $E$ and a skew bilinear pairing  $<e , e'>$ on edges  $e,e' \in \mathbb{Z}[E]$ such that
\begin{enumerate}
 \item[(i)] the monodromy representation of $\pi_{1}(\A)$ acting on $H_{1}({\rm Prym}(S,\Sigma),\mathbb{Z})$ is generated by elements $\sigma_{e}$ labelled by the edges $e\in E$,
\item[(ii)] one can define an action of $\pi_{1}(\A)$ on $e'\in \mathbb{Z}[E]$ given by
\[\sigma_{e}(e')=e'-<e',e>e,\]
\item[(iii)] the monodromy representation of the action of $\pi_{1}(\A)$ on $H_{1}({\rm Prym}(S,\Sigma),\mathbb{Z})$ is a quotient of this module $\mathbb{Z}[E]$.
\end{enumerate}\label{copeland}
\end{theorem}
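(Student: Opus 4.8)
The plan is to prove Theorem~\ref{copeland} by unwinding the classical theory of vanishing cycles for the family of spectral curves, following \cite{cope1} but organized so that the hyperelliptic hypothesis is used only where strictly necessary (so that \cite{walker} can remove it). First I would fix a base point $a_{0}\in\A$, recall that $\A$ is the complement of the discriminant locus $\Delta\subset H^{0}(\Sigma,K^{2})$ parametrizing quadratic differentials with a repeated zero, and that $\pi_{1}(\A)$ is generated by meridian loops around the smooth components of $\Delta$. Over the smooth part of $\Delta$ a single pair of the $4g-4$ zeros of the quadratic differential collides, so the spectral curve $S$ acquires one node; in the double cover $\rho\colon S\to\Sigma$ this corresponds to a vanishing cycle $\delta_{e}\in H_{1}(S,\mathbb{Z})$, and since the node is $\tau$-anti-invariant (the involution $\eta\mapsto -\eta$ swaps the two branches through the would-be node) the class $\delta_{e}$ lies in the anti-invariant part $H_{1}(S,\mathbb{Z})^{-}$, which after tensoring is exactly $H_{1}({\rm Prym}(S,\Sigma),\mathbb{Z})$ up to the usual index-two subtleties. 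The Picard--Lefschetz formula then gives, for the meridian $\sigma_{e}$, the transvection $\sigma_{e}(c)=c-\langle c,\delta_{e}\rangle\,\delta_{e}$ on cycles $c$, where $\langle\,,\,\rangle$ is the intersection form on $S$ restricted to the anti-invariant part; this is statement (ii) once one sets $e\leftrightarrow\delta_{e}$ and defines the graph $\check\Gamma$ to have one edge per local component of $\Delta$, i.e.\ per unordered pair of branch points, with the pairing $\langle e,e'\rangle$ read off from how the vanishing cycles intersect.

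Next I would make statement (i) precise: the meridians $\sigma_{e}$ generate $\pi_{1}(\A)$ because $\Delta$ has complex codimension one and $H^{0}(\Sigma,K^{2})$ is simply connected (indeed a vector space), so any loop in $\A$ is a product of conjugates of meridians; conjugation only changes which basepoint-identification of the vanishing cycle one uses, so the image in $\mathrm{Aut}\,H_{1}({\rm Prym}(S,\Sigma),\mathbb{Z})$ is generated by the $\sigma_{e}$ themselves. This is where the explicit combinatorial model of \cite{cope1} enters: for a hyperelliptic $\Sigma$ one has an explicit handle on $\A$ via the configuration space of the $4g-4$ zeros relative to the $2g+2$ branch points of $\Sigma$, and one identifies $\check\Gamma$ with a specific bipartite-type graph whose edges are these colliding pairs; \cite{walker} upgrades the configuration-space description to arbitrary $\Sigma$. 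For (iii), the point is that the abstract $\mathbb{Z}[E]$-module on which $\pi_{1}(\A)$ acts by the formula in (ii) surjects onto $H_{1}({\rm Prym}(S,\Sigma),\mathbb{Z})$ via $e\mapsto\delta_{e}$: surjectivity holds because the vanishing cycles of a Lefschetz pencil span the vanishing homology, and here all of $H_{1}({\rm Prym})$ is vanishing (the Prym is a product of vanishing cycles since the generic spectral curve degenerates to a totally nodal one), so the map is onto and the monodromy representation is the induced quotient action.

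The main obstacle — and the part that genuinely needs \cite{cope1}'s combinatorics rather than soft Picard--Lefschetz — is pinning down the graph $\check\Gamma$ and the \emph{skew} pairing $\langle e,e'\rangle$ explicitly, including the correct signs and the precise identification of the kernel of $\mathbb{Z}[E]\to H_{1}({\rm Prym}(S,\Sigma),\mathbb{Z})$. Two vanishing cycles $\delta_{e},\delta_{e'}$ for disjoint pairs of branch points are disjoint and pair to $0$; for pairs sharing a branch point they pair to $\pm1$; but to get a well-defined \emph{bilinear} pairing on $\mathbb{Z}[E]$ (as opposed to an antisymmetric form only up to the relations) one must choose orientations of the edges consistently with the $\tau$-action, and verify that the transvection formula is independent of these choices modulo the module relations. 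I would handle this by transporting the intersection form on $H_{1}(S,\mathbb{Z})^{-}$ through the explicit symplectic basis coming from the $\tau$-equivariant handle decomposition of $S$ used in \cite{cope1}, checking the three cases (disjoint pairs, adjacent pairs, equal pair) directly; the equal-pair case gives $\langle e,e\rangle=0$ by skew-symmetry, which is what makes $\sigma_{e}$ an honest involution-type transvection. I will treat the hyperelliptic case following \cite{cope1} and then invoke \cite{walker} for the reduction of the general case to it, so I will not reproduce the full configuration-space argument here.
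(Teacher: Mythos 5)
First, be aware that the paper does not prove Theorem~\ref{copeland}: it is imported wholesale from \cite{cope1} (hyperelliptic case) and \cite{walker} (general case), and Section~\ref{sec:copeland} only summarizes how $\check\Gamma$ is built from the explicit differential $\omega$ on $y^{2}=x^{2g+2}-1$. Measured against that, your proposal is a fair reconstruction of the strategy those references actually use: meridians around the discriminant generate $\pi_{1}(\A)$ because $H^{0}(\Sigma,K^{2})$ is a simply connected vector space; a generic collision of two zeros of $a$ produces a node on $S$ whose vanishing cycle is $\tau$-anti-invariant and hence lies in $H_{1}({\rm Prym}(S,\Sigma),\mathbb{Z})$; and Picard--Lefschetz yields the transvection in (ii).

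Two points, however, need correcting. First, the edge set of $\check\Gamma$ is \emph{not} ``one edge per unordered pair of branch points'': Copeland's graph has $4g-4$ vertices but only $6g-2$ edges (the ring of eight triangles and $2g-6$ quadrilaterals of Figure~\ref{annulus}, with edges $l_{i},u_{i},b_{i}$), whereas your description would produce $\binom{4g-4}{2}$ edges. Near a generic point of the discriminant all meridians are conjugate in $\pi_{1}(\A)$, so distinct transvections correspond to distinct homotopy classes of arcs along which a pair of zeros collides; the substantive content of (i) is that a particular \emph{finite} collection of such arcs --- the edges of $\check\Gamma$ --- already generates the image of the monodromy. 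Your argument only shows the image is generated by the set of all meridians (infinitely many distinct transvections), and the reduction to the finite set labelled by $E$ is precisely what you defer to \cite{cope1}; that is acceptable for a quoted theorem, but it is the essential step rather than a bookkeeping issue. Second, the surjectivity claimed in (iii) (``all of $H_{1}({\rm Prym})$ is vanishing'') is asserted rather than argued; in \cite{cope1} it falls out of the explicit presentation ${\rm Prym}(S,\Sigma)\cong\mathbb{R}[E]/(\mathbb{R}[F]\cap\frac{1}{2}\mathbb{Z}[E])$ recorded here as Proposition~\ref{prim}, not from a general Lefschetz-pencil principle, and your parenthetical justification (degeneration to a totally nodal spectral curve) would itself require proof.
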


In order to construct the graph $\check{\Gamma}$ Copeland looks at the particular case of $\Sigma$ given by the non-singular compactification of the zero set of $y^{2}=f(x)=x^{2g+2}-1$. Firstly, by considering $\omega \in \mathcal{A}$ given by
\[\omega=(x-2\zeta^{2})(x-2\zeta^{4})(x-2\zeta^{6})(x-2\zeta^{8})\prod_{9\leq j\leq 2g+2}(x-2\zeta^{j})\left(\frac{dx}{y}\right)^{2},\]
for $\zeta=e^{2\pi i /2g+2}$, it is shown in \cite{cope1} how interchanging two zeros of the differential provides information about the generators of the monodromy. Then, by means of the ramification points of the surface,  a dual graph to $\check{\Gamma}$ for which each zero of $\omega$ is in a face could be constructed.  Copeland's analysis extends  to any element in $\A$.

 Following  \cite{cope1}, we consider the graph $\check{\Gamma}$ whose $4g-4$ vertices are given by the ramification divisor of $\rho:S\rightarrow \Sigma$, i.e., the zeros of $a={\rm det}(\Phi)$. For genus $g=3,5,$ and $10$, the graph $\check{\Gamma}$ constructed by Copeland in \cite{cope1} is given by:

\begin{figure}[htbp]
\centering \epsfig{file=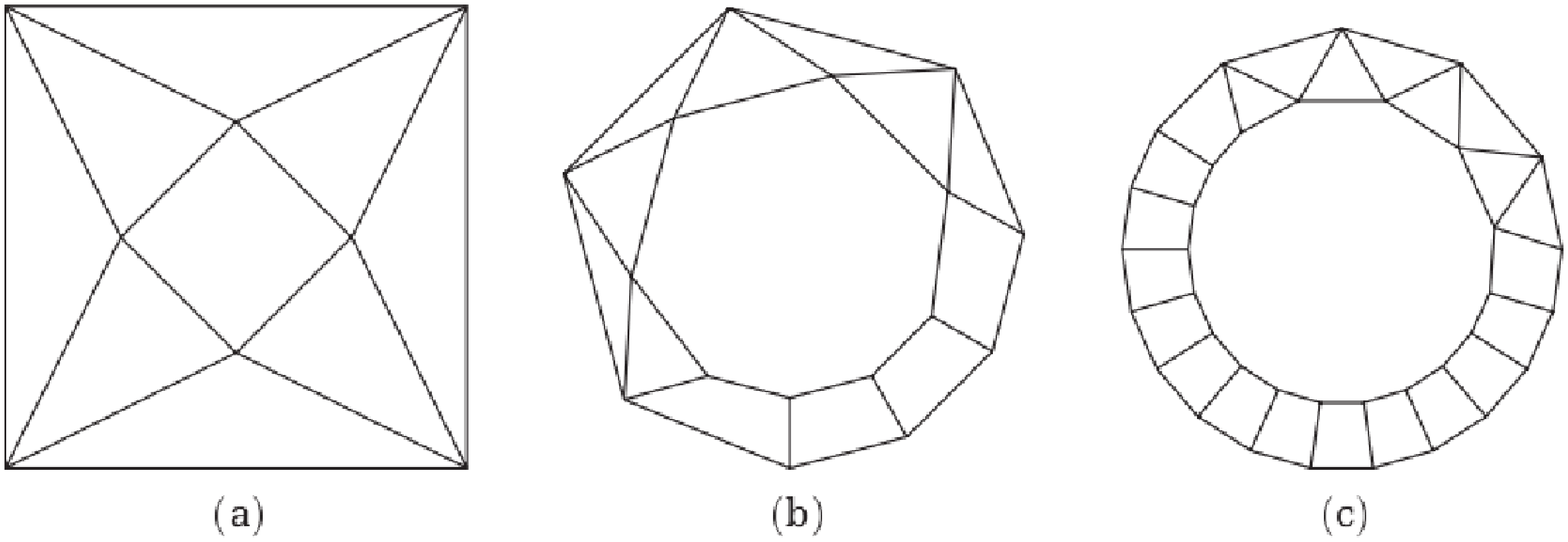, width=9 cm}
\caption{}
\label{grafo4}
\end{figure}
For  $g> 3$ the graph $\check{\Gamma}$ is given by a ring with 8 triangles next to each other, $2g-6$ quadrilaterals and $4g-4$ vertices, and  we shall label its edges as in \cite{cope1}:
\begin{figure}[htbp]
\centering \epsfig{file=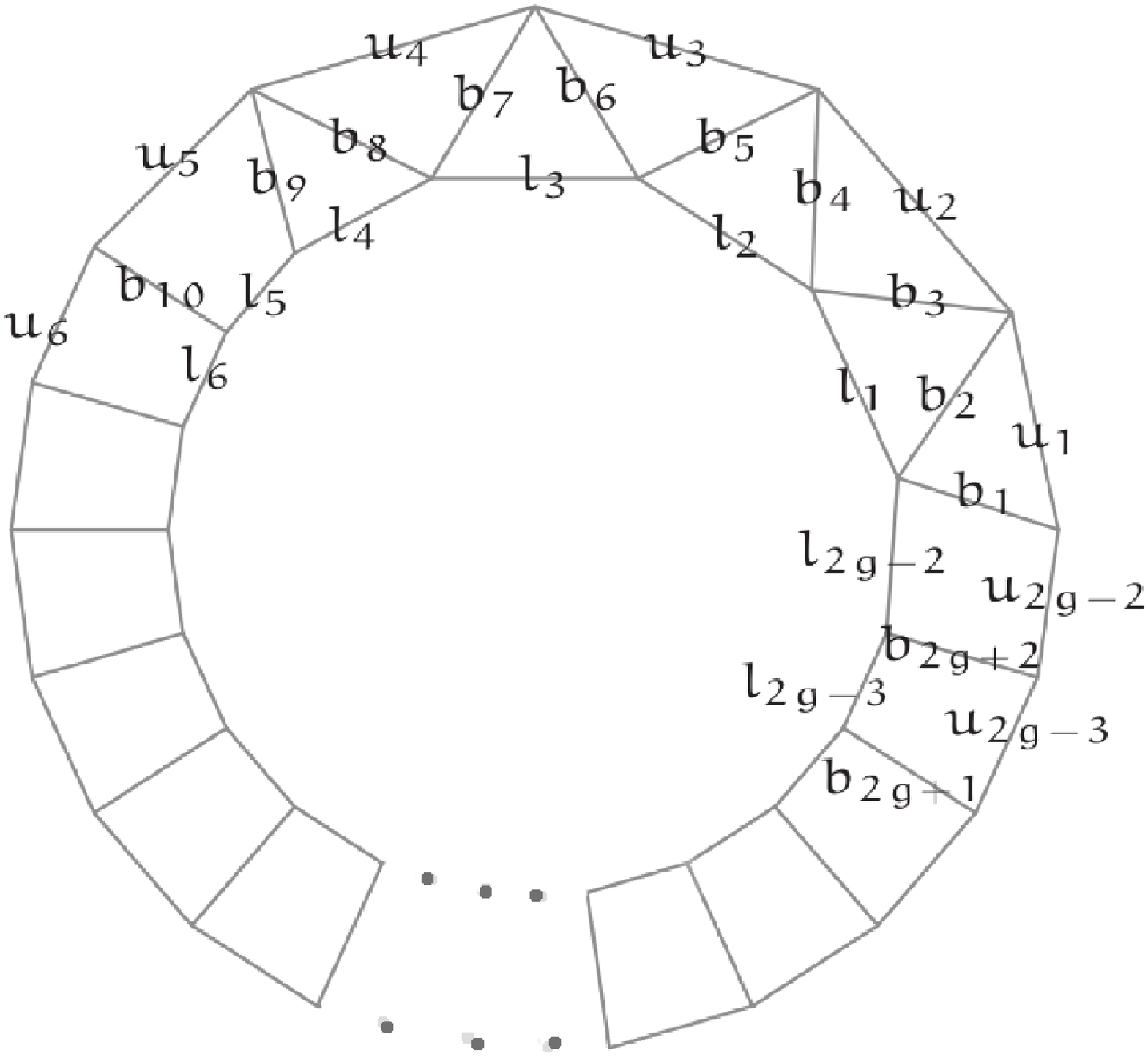, width=7 cm}
\caption{} 
\label{annulus}
\end{figure}

Considering the lifted graph of $\check{\Gamma}$ in the curve $S$ over $\Sigma$, Copeland could show the following in Theorem 16  and Proposition 21 \cite{cope1}:
\begin{proposition}\cite{cope1}\label{pro3}
If $E$ and $F$ are respectively the edge  and face sets of $\check{\Gamma}$, then  there is an induced homeomorphism
\begin{eqnarray}
 {\rm Prym}(S,\Sigma)\cong \frac{\mathbb{R}[E]}{\left(\mathbb{R}[F]\cap\frac{1}{2}\mathbb{Z}[E]\right)},\label{prince}
\end{eqnarray}
where the inclusion $\mathbb{R}[F] \subset \mathbb{R}[E]$ is defined by the following relations between the edges $\check{\Gamma}$ and the boundaries $\tilde{x}_{1},\ldots,\tilde{x}_{4}$ of the faces:
\begin{eqnarray}
 ~\tilde{x}_{1}&=&\sum_{i=1}^{2g-2}l_{i} ~;\nonumber\\ ~\tilde{x}_{2}&=&\sum_{i=1}^{2g-2}u_{i}~;\nonumber\\
~ \tilde{x}_{3}&=&~\sum_{even \geq  6}u_{i}-\sum_{odd \geq 5}l_{i}+\sum_{i=1}^{2g+2}b_{i}~; \nonumber\\
~\tilde{x}_{4}&=&~l_{1}+l_{3}-u_{2}-u_{4}+\sum_{odd}u_{i}-\sum_{even}l_{i}+\sum_{i=1}^{2g+2}b_{i}.\nonumber
\end{eqnarray}
  \label{principal}\label{prim}
\end{proposition}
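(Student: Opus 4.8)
The plan is to reduce the statement to a cellular computation for the ramified double cover $\rho\colon S\to\Sigma$ using the lift of $\check\Gamma$, after first recasting ${\rm Prym}(S,\Sigma)$ homologically. As a real torus the Prym is the identity component of $\Ker(1+\tau^{*})$ on ${\rm Jac}(S)$; its tangent space is the $\tau$-anti-invariant part of $H^{1}(S,\mathcal{O}_{S})$, and since $H_{1}(S,\mathbb{Z})$ is torsion free, the relation that ${\rm Nm}\circ\rho^{*}$ is multiplication by $2$ forces its period lattice to be the anti-invariant lattice $H_{1}(S,\mathbb{Z})^{-}=\Ker\big({\rm Nm}\colon H_{1}(S,\mathbb{Z})\to H_{1}(\Sigma,\mathbb{Z})\big)$. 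So it is enough to compute $H_{1}(S,\mathbb{R})^{-}$ together with its natural integral structure and to check that the resulting identification is the one induced by lifting cycles through $\rho$ and the $K^{1/2}$-twist of Proposition~\ref{fibras}.

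First I would fix the cell structure on $\Sigma$ carried by $\check\Gamma$: the $4g-4$ vertices are the simple zeros of $a={\rm det}(\Phi)$, the $1$-cells are the edges $l_{i},u_{i},b_{i}$ of Figure~\ref{annulus}, and the $2$-cells give, after recording their boundaries, the four relations $\tilde x_{1},\dots,\tilde x_{4}$; an Euler-characteristic count shows this is genuinely a cell decomposition of $\Sigma$ and that $\dim_{\mathbb{R}}\big(\mathbb{R}[E]/\mathbb{R}[F]\big)=6g-6=\dim_{\mathbb{R}}{\rm Prym}$. The essential structural point is that the branch locus of $\rho$ is exactly the vertex set: over the interior of each edge and over the interior of each face the cover is a trivial double cover, and over each vertex it is a single ramification point. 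Pulling back the cells, the lifted graph $\tilde\Gamma$ on $S$ has $4g-4$ vertices, $2|E|$ edges and $2|F|$ faces, with $\tau$ fixing each vertex and freely interchanging the two lifts $\tilde c,\tau\tilde c$ of every edge and every face; Riemann--Hurwitz gives $g_{S}=4g-3$, consistent with the dimension count above.

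Next I would run the cellular chain complex $C_{2}(\tilde\Gamma)\xrightarrow{\partial_{2}}C_{1}(\tilde\Gamma)\xrightarrow{\partial_{1}}C_{0}(\tilde\Gamma)$ on its $\tau$-eigenspaces. Since $\tau$ fixes every vertex, $C_{0}^{-}=0$, so $\partial_{1}$ vanishes on $C_{1}^{-}$ and \emph{every} anti-invariant $1$-chain is already a cycle; the maps $e\mapsto\tilde e-\tau\tilde e$ and $f\mapsto\tilde f-\tau\tilde f$ identify $C_{1}^{-}$ with $\mathbb{R}[E]$ and $C_{2}^{-}$ with $\mathbb{R}[F]$, and $\partial_{2}(\tilde f-\tau\tilde f)$ records the boundary relation attached to $f$. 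Walking around each lifted face and noting, edge by edge, whether the boundary is traversed on the ``$+$'' or the ``$-$'' sheet produces the four displayed expressions for $\tilde x_{1},\dots,\tilde x_{4}$ — this is precisely where Copeland's explicit labelling of $\check\Gamma$ is used and where all the signs come from. Hence $H_{1}(S,\mathbb{R})^{-}\cong\mathbb{R}[E]/\mathbb{R}[F]$, and the asserted homeomorphism is the one sending a point of ${\rm Prym}(S,\Sigma)$ to its vector of periods along these edge-cycles.

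I expect the main obstacle to be the \emph{integral} part of the statement, namely the appearance of $\tfrac12\mathbb{Z}[E]$ rather than $\mathbb{Z}[E]$. Because $\rho$ is ramified exactly over the vertices of $\tilde\Gamma$, the geometric cycles that link the branch points — carrying with them the half-integral shift built into the identification $M=L\otimes\rho^{*}K^{-1/2}$ of a fibre with the Prym in Proposition~\ref{fibras} — are most naturally represented by edge-chains with coefficients in $\tfrac12\mathbb{Z}$; one must then verify that $\mathbb{R}[F]\cap\tfrac12\mathbb{Z}[E]$ is a full-rank lattice in $\mathbb{R}[F]$, so that the quotient is compact, and that the quotient is ${\rm Prym}(S,\Sigma)$ on the nose rather than merely an isogenous torus. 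I would settle this by exhibiting compatible bases — for instance using the $b_{i}$ together with $2g-3$ horizontal loops downstairs and their lifts — and matching period and polarization data directly, instead of relying on a dimension count. The remainder is the bookkeeping of orientations and sheets in the boundary maps, which is routine once the cell structure and the lift have been pinned down.
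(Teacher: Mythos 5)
The paper does not actually prove this statement: it is imported from Copeland (Theorem 16 and Proposition 21 of \cite{cope1}), so your proposal can only be measured against Copeland's argument, and in outline it follows the same route the paper attributes to him --- the cell structure on $\Sigma$ with vertices at the $4g-4$ branch points, its lift to $S$, and the $\tau$-anti-invariant part of the cellular chain complex. The real-coefficient part of your sketch is sound: $\tau$ fixes every vertex, so $C_{0}^{-}=0$ and every anti-invariant $1$-chain is a cycle; $e\mapsto\tilde{e}-\tau\tilde{e}$ and $f\mapsto\tilde{f}-\tau\tilde{f}$ identify $C_{1}^{-}\cong\mathbb{R}[E]$ and $C_{2}^{-}\cong\mathbb{R}[F]$; and the counts $|E|=6g-2$, $|F|=4$, $g_{S}=4g-3$ give $\dim\bigl(\mathbb{R}[E]/\mathbb{R}[F]\bigr)=6g-6=\dim_{\mathbb{R}}H_{1}(S,\mathbb{R})^{-}$. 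Extracting the four displayed boundary relations is then genuinely just sheet-and-orientation bookkeeping against Copeland's labelling.

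The gap is precisely the point you flag and then defer: the integral structure. Over $\mathbb{R}$ taking $\tau$-anti-invariants is exact, but $H_{1}(S,\mathbb{Z})^{-}$ is \emph{not} the image of $\mathbb{Z}[E]$ under $e\mapsto\tilde{e}-\tau\tilde{e}$: a class $[c]$ with $\tau_{*}[c]=-[c]$ satisfies $c+\tau c=\partial w$ for an integral $2$-chain $w$, so its canonical anti-invariant representative $\frac{1}{2}(c-\tau c)=c-\frac{1}{2}\partial w$ lies in $\frac{1}{2}\mathbb{Z}[E]$, and one gets a priori only the sandwich ${\rm im}\,\mathbb{Z}[E]\subseteq H_{1}(S,\mathbb{Z})^{-}\subseteq{\rm im}\,\frac{1}{2}\mathbb{Z}[E]$. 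Showing that the second inclusion is an equality --- i.e.\ that the lattice produced by $\frac{1}{2}\mathbb{Z}[E]$ is the period lattice of ${\rm Prym}(S,\Sigma)$ and not a strictly larger commensurable one, which would only yield a finite quotient of the Prym --- is the actual content of Copeland's Proposition 21, and ``exhibit compatible bases and match period data'' names that task without performing it. Relatedly, your worry that $\mathbb{R}[F]\cap\frac{1}{2}\mathbb{Z}[E]$ be full rank in $\mathbb{R}[F]$ ``so that the quotient is compact'' does not parse: $\mathbb{R}[E]$ modulo a rank-$4$ lattice is never compact. The displayed formula must be read, as the paper itself does in the proof of Proposition \ref{Pmon}, as the quotient of $\mathbb{R}[E]/\mathbb{R}[F]\cong\mathbb{R}^{6g-6}$ by the lattice $\frac{1}{2}\mathbb{Z}[E]\big/\bigl(\mathbb{R}[F]\cap\frac{1}{2}\mathbb{Z}[E]\bigr)$; here $\mathbb{R}[F]\cap\frac{1}{2}\mathbb{Z}[E]$ is the kernel of $\frac{1}{2}\mathbb{Z}[E]\to\mathbb{R}[E]/\mathbb{R}[F]$, not a period lattice in its own right. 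Pinning down that reading and the lattice identification is what separates your sketch from a proof.
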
 
Note that $\mathbb{R}[F] \cap \frac{1}{2}\mathbb{Z}[E]$ can be understood by considering the following sum:
\[\tilde{x}_{3}+\tilde{x}_{4}+\tilde{x}_{1}-\tilde{x}_{2}= 2\left(l_{1}+l_{3}-u_{2}-u_{4}+\sum_{i=1}^{2g+2}b_{i}\right)=2 \tilde{x}_{5}.\]
Although the summands above are not individually in $\mathbb{R}[F] \cap \frac{1}{2}\mathbb{Z}[E]$, when summed they satisfy
\[\tilde{x}_{5}\in \mathbb{R}[F] \cap \frac{1}{2}\mathbb{Z}[E].\]

\begin{remark}\label{remar}
  From Proposition \ref{fibras} and Proposition \ref{pro3} one has that the fibres of $\M$ are isomorphic to the quotient of $\mathbb{R}[E]$ by the space spanned by $\{\tilde{x}_{1},\ldots, \tilde{x}_{5}\}.$
\end{remark}

\begin{remark} 
For $g=2$ it is known that  $\pi_{1}(\A)\cong \mathbb{Z}\times \pi_{1}(S_{6}^{2})$,  where $S_{6}^{2}$ is the sphere $S^{2}$ with $6$ holes (e.g. \cite{cope1}).
\end{remark}

\section{The action on $\mathbb{Z}_{2}[E]$}\label{sec:group1}\label{sec:mono2}

From Proposition \ref{prop:mono2},   the monodromy action for $SL(2,\mathbb{R})$-Higgs bundles can be understood through the monodromy of the Gauss-Manin connection on the mod 2 cohomology of the fibres of the Hitchin fibration. This is the action on $P[2]$, and thus it is convenient to first give a combinatorial description of this space.

\begin{proposition}\label{Pmon}
 The space $P[2]$ is   the quotient of $\mathbb{Z}_{2}[E]$ by the subspace generated by $x_{1}, x_{2}, x_{4}$ and $x_{5}$, where
 \begin{eqnarray}
~x_{1}&:=&\sum_{i=1}^{2g-2}l_{i};~\nonumber\\ 
~x_{2}&:=&\sum_{i=1}^{2g-2}u_{i};~ \nonumber\\
%~ x_{3}&:=&~l_{1}+l_{3}+u_{2}+u_{4}+\sum_{even}u_{i}+\sum_{odd}l_{i}+\sum_{i=1}^{2g+2}b_{i};~ ~{~~}~ \label{ceromas}\nonumber\\
~x_{4}&:=&l_{1}+l_{3}+u_{2}+u_{4}+\sum_{odd}u_{i}+\sum_{even}l_{i}+\sum_{i=1}^{2g+2}b_{i}~;~{~~}~\nonumber\\
x_{5}&:=& l_{1}+l_{3}+u_{2}+u_{4}+\sum_{i=1}^{2g+2}b_{i}.\label{cuarta}\nonumber
\end{eqnarray}

\end{proposition}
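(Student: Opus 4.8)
The plan is to reduce the mod~2 statement to the integral statement of Proposition~\ref{pro3} together with Remark~\ref{remar}, and then carefully track what happens to the various lattices under the reduction map $\mathbb{R}[E]\supset\mathbb{Z}[E]\to\mathbb{Z}_2[E]$. By Proposition~\ref{prop:mono2} we have $P[2]\cong H_1(\mathrm{Prym}(S,\Sigma),\mathbb{Z}_2)$, and by Remark~\ref{remar} the fibre is $\mathrm{Prym}(S,\Sigma)\cong \mathbb{R}[E]/W$, where $W\subset\mathbb{R}[E]$ is the real span of $\tilde{x}_1,\dots,\tilde{x}_5$. Writing $\Lambda$ for the lattice $W\cap\frac{1}{2}\mathbb{Z}[E]$ appearing in \eqref{prince}, so that $\mathrm{Prym}(S,\Sigma)\cong\mathbb{R}[E]/(\text{lattice generated by }\mathbb{Z}[E]\text{ and }\Lambda)$ — more precisely the identification of Proposition~\ref{pro3} exhibits the integral lattice $\wedge$ of the torus as the image of $\Lambda$ under $\mathbb{R}[E]\to\mathbb{R}[E]/W$. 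Then $P[2]=\frac{1}{2}\wedge/\wedge\cong\Lambda/2\Lambda$ (using that $\Lambda$ is torsion-free, exactly as in the proof of Proposition~\ref{prop:mono2}).

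The first substantive step is to describe $\Lambda=W\cap\frac{1}{2}\mathbb{Z}[E]$ explicitly. The five elements $\tilde{x}_1,\tilde{x}_2,\tilde{x}_3,\tilde{x}_4$ all lie in $\mathbb{Z}[E]$, and the computation displayed after Proposition~\ref{prim} shows that $\tilde{x}_3+\tilde{x}_4+\tilde{x}_1-\tilde{x}_2=2\tilde{x}_5$ with $\tilde{x}_5\in\frac{1}{2}\mathbb{Z}[E]$ but $\tilde{x}_5\notin\mathbb{Z}[E]$. I claim $\Lambda$ is freely generated by $\tilde{x}_1,\tilde{x}_2,\tilde{x}_4,\tilde{x}_5$: these are linearly independent over $\mathbb{R}$ (one checks this from the explicit edge-expansions, the $l_i$-, $u_i$-, $b_i$-supports being sufficiently distinct), they span the same real subspace $W$ as $\tilde{x}_1,\dots,\tilde{x}_5$ since $\tilde{x}_3=2\tilde{x}_5-\tilde{x}_4-\tilde{x}_1+\tilde{x}_2$, and any element of $W$ with half-integral coordinates, when written as $\sum c_i\tilde{x}_i$ in this basis, forces each $c_i\in\mathbb{Z}$ by examining coordinates of edges that appear in exactly one generator (for instance a $u_i$ or $l_i$ with $i$ large appears only in $\tilde{x}_1,\tilde{x}_2$, resp. in $\tilde{x}_3$-type combinations). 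This is the place where the precise combinatorial shape of the graph $\check{\Gamma}$ from Figure~\ref{annulus} is used, and it is the step I expect to be the main obstacle — not conceptually deep, but requiring a genuinely careful bookkeeping of which edges occur with which coefficients in each $\tilde{x}_j$.

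Granting that $\Lambda=\mathbb{Z}\tilde{x}_1\oplus\mathbb{Z}\tilde{x}_2\oplus\mathbb{Z}\tilde{x}_4\oplus\mathbb{Z}\tilde{x}_5$, we get $P[2]\cong\Lambda/2\Lambda$. Now I produce the identification with the claimed quotient of $\mathbb{Z}_2[E]$. Reduce mod~2 the inclusion $\Lambda\hookrightarrow\frac{1}{2}\mathbb{Z}[E]$: multiplying by~$2$ identifies $\frac{1}{2}\mathbb{Z}[E]$ with $\mathbb{Z}[E]$, under which $\tilde{x}_1,\tilde{x}_2,\tilde{x}_4,\tilde{x}_5$ go to $2\tilde{x}_1,2\tilde{x}_2,2\tilde{x}_4,2\tilde{x}_5$; but it is cleaner to argue directly. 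There is a natural surjection $\mathbb{Z}_2[E]\to P[2]$: an element of order two in the Prym is represented by a point $\frac{1}{2}v$ with $v\in\mathbb{Z}[E]$ whose class lies in $W$, i.e. by a vector $v\in\mathbb{Z}[E]$ that becomes a relation mod the lattice, and two such $v$ give the same point of $P[2]$ iff they differ by an element of $2\Lambda$ intersected appropriately with $\mathbb{Z}[E]$. Chasing this through, one finds $P[2]$ is the quotient of $\mathbb{Z}_2[E]$ by the image of $\{v\in\mathbb{Z}[E]: \tfrac12 v\in\Lambda\}$; and since $\Lambda$ has the above basis, $\tfrac12 v\in\Lambda$ with $v\in\mathbb{Z}[E]$ means $v$ is an integer combination of $\tilde x_1,\tilde x_2,\tilde x_4$ and of $2\tilde x_5$, together with the even multiples of everything — but reducing mod~2, the generators $\tilde x_1,\tilde x_2,\tilde x_4$ survive and $2\tilde x_5\equiv 0$, so one must instead take $v$ ranging over $2\Lambda\cap\mathbb{Z}[E]$, which is spanned mod~2 precisely by (the reductions of) $\tilde x_1,\tilde x_2,\tilde x_4$ and $\tilde x_5$ — the last because $2\tilde x_5\in\mathbb{Z}[E]$ reduces to the mod-2 vector whose support is the symmetric difference recorded in $\tilde x_5$, i.e. the displayed $x_5$. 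Finally, I verify that the mod-2 reductions of $\tilde{x}_1,\tilde{x}_2,\tilde{x}_4$ are exactly $x_1,x_2,x_4$ as written in \eqref{cuarta}: the only change is that the minus signs in $\tilde x_4$ (and the $2\tilde x_5$ identity) become plus signs over $\mathbb{Z}_2$, which accounts for the sign flips on $u_2,u_4$ and on $\sum_{even}l_i$ between $\tilde x_4$ and $x_4$, and likewise $x_5$ is the mod-2 reduction of $\tilde x_5=l_1+l_3-u_2-u_4+\sum_{i=1}^{2g+2}b_i$. Note that $\tilde x_3$ does not contribute an independent generator mod~2 because of the integral relation $\tilde x_3+\tilde x_4+\tilde x_1-\tilde x_2=2\tilde x_5$, which mod~2 reads $x_3\equiv x_1+x_2+x_4$; this is why only $x_1,x_2,x_4,x_5$ appear. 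Assembling these identifications gives the statement.
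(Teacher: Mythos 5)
Your route is the paper's own: combine Proposition \ref{prop:mono2} with the presentation of ${\rm Prym}(S,\Sigma)$ from Proposition \ref{pro3}, identify the integral lattice $\wedge$, and reduce the relations $\tilde{x}_1,\dots,\tilde{x}_5$ mod $2$; and your endpoint is right (the reductions of $\tilde{x}_1,\tilde{x}_2,\tilde{x}_4,\tilde{x}_5$ are the displayed $x_1,x_2,x_4,x_5$, and $\tilde{x}_3$ is redundant because $\tilde{x}_1-\tilde{x}_2+\tilde{x}_3+\tilde{x}_4=2\tilde{x}_5$ reads $x_3=x_1+x_2+x_4$ over $\mathbb{Z}_2$). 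But the lattice bookkeeping in between, which is the entire content of the proposition, goes wrong in three linked places. First, $\wedge$ is not ``the image of $\Lambda$ under $\mathbb{R}[E]\to\mathbb{R}[E]/W$'': since $\Lambda=W\cap\tfrac12\mathbb{Z}[E]$ is contained in $W$, that image is zero. The torus is $\bigl(\mathbb{R}[E]/W\bigr)/\wedge$ with $\wedge$ the image of $\tfrac12\mathbb{Z}[E]$, i.e.\ $\wedge=\tfrac12\mathbb{Z}[E]/\Lambda$, exactly as in the paper's proof. Second, and consequently, $P[2]\cong\Lambda/2\Lambda$ cannot hold: $\Lambda$ has rank $4$, so $\Lambda/2\Lambda\cong\mathbb{Z}_2^{4}$, whereas $P[2]\cong\mathbb{Z}_2^{6g-6}$. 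The correct chain is $P[2]=\wedge/2\wedge=\tfrac12\mathbb{Z}[E]\big/\bigl(\mathbb{Z}[E]+\Lambda\bigr)\cong\mathbb{Z}[E]\big/\bigl(2\mathbb{Z}[E]+2\Lambda\bigr)=\mathbb{Z}_2[E]\big/\overline{2\Lambda}$, where $2\Lambda=W\cap\mathbb{Z}[E]$ and the bar denotes the image in $\mathbb{Z}_2[E]$.

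Third, your claim that $\Lambda$ itself is freely generated by $\tilde{x}_1,\tilde{x}_2,\tilde{x}_4,\tilde{x}_5$ is false: $\tfrac12\tilde{x}_1$ lies in $W\cap\tfrac12\mathbb{Z}[E]=\Lambda$ but is not an integral combination of those four vectors. The statement actually needed --- and the one genuine combinatorial verification the paper leaves implicit, which you rightly flag as the crux --- is that $W\cap\mathbb{Z}[E]=2\Lambda$ is $\mathbb{Z}$-generated by $\tilde{x}_1,\tilde{x}_2,\tilde{x}_4,\tilde{x}_5$. You have attached that check to the wrong lattice, and your closing step inherits the confusion: with your $\Lambda$, the set ``$2\Lambda\cap\mathbb{Z}[E]$'' is spanned by the $2\tilde{x}_i$, all of which vanish mod $2$, so it cannot produce the four relations you want. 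Once $\wedge$ and the generation statement are corrected as above, the reduction mod $2$ closes the argument as in the paper.
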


\begin{proof}
Following the notation of Proposition \ref{prop:mono2}, from Proposition \ref{prim} one may write ${\rm Prym}(S,\Sigma)\cong \mathbb{R}^{6g-6}/\wedge$, where $$\wedge:=\frac{\frac{1}{2}\mathbb{Z}[E]}{\mathbb{R}[F]\cap\frac{1}{2}\mathbb{Z}[E]}.$$ Then, the result follows from Remark \ref{remar}, since over $\mathbb{Z}_{2}$, the equations for $\tilde{x}_{1},\tilde{x}_{2}, \tilde{x}_{3}, \tilde{x}_{4}$ and  $\tilde{x}_{5}$  are equivalent to the relations $x_{1},x_{2},X_{4}$ and $x_{5}$.
\end{proof}

Since the space $P[2]$ is given by the quotient of $\mathbb{Z}_{2}[E]$ by the subspace generated by $x_{1}, x_{2}, x_{4}$ and $x_{5}$, we shall dedicate this section to understand the action of the generators from Theorem \ref{copeland} on $\mathbb{Z}_{2}[E]$.

\begin{definition}
 Let $C_{1}$ be the the space  of 1-chains for a subdivision of the annulus in Figure \ref{annulus}, i.e., the space $\mathbb{Z}_{2}[E]$ spanned by the edge set $E$ over $\mathbb{Z}_{2}$. The boundary  map $\partial$ to the space $C_{0}$ of 0-chains (spanned by the vertices of $\check{\Gamma}$ over $\mathbb{Z}_{2}$) is defined   on an edge $e\in C_{1}$ with vertices $v_{1}, ~v_{2}$ as $\partial e= v_{1}+v_{2}$. 
\end{definition}

For $\Sigma^{[4g-4]}$ be the configuration space of $4g-4$ points in $\Sigma$, there is a natural map  $p:\A\rightarrow \Sigma^{[4g-4]}$ which takes a quadratic differential to its zero set. Furthermore, $p$ induces the following maps
\[\pi_{1}(\A) \rightarrow \pi_{1}(\Sigma^{[4g-4]}) \rightarrow S_{4g-4}~,\]
where $S_{4g-4}$ is the symmetric group of $4g-4$ elements. Thus there is  a natural  permutation action on  $C_{0}$,  and Copeland's generators in $\pi_{1}(\A)$ map to transpositions in $S_{4g-4}$. Concretely, a generator   $\sigma_{e}$ labelled by the edge $e$ as in Theorem \ref{copeland} acts on another edge $x$ by
\begin{equation}\sigma_{e}(x)= x ~+ <x,e>e,\label{generator}\end{equation}
where $<\cdot,\cdot>$ is the intersection pairing. As this pairing is skew over $\mathbb{Z}$, for any edge $e$ one has $<e,e>=0$.

\begin{definition}
We denote by $G_{1}$ the group of transformations of $C_{1}$ generated by $\sigma_{e}$, for $e\in E$. 
\end{definition}
 
\begin{proposition} The group $G_{1}$ acts trivially on $Z_{1}= \ker (\partial:C_{1}\rightarrow C_{0})$.   \label{trivial}
\end{proposition}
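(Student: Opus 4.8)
The plan is to exploit the formula \eqref{generator} directly: each generator $\sigma_e$ adds to any chain $x$ the multiple $\langle x,e\rangle e$ of the single edge $e$, so to prove that $G_1$ fixes every cycle it suffices to check that $\langle z,e\rangle=0$ for all $z\in Z_1$ and all $e\in E$. First I would observe that the pairing $\langle\cdot,\cdot\rangle$ appearing in Theorem~\ref{copeland} is, up to the reduction mod $2$ that is in force throughout this section, the algebraic intersection pairing of $1$-chains on the surface $S$ (the lifted graph living in $S$); this is how Copeland defines it. The key point is then the standard fact that the intersection pairing on $1$-chains descends to homology, or more precisely that a boundary pairs trivially with a cycle: if $e$ itself were a cycle we would be done by skew-symmetry ($\langle e,e\rangle=0$), but in general $e$ is just an edge with $\partial e=v_1+v_2\neq 0$. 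The fix is to pair $z$ against $e$ and use that $z$ is closed.

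Concretely, the second step is to package the generators. Each Copeland generator $\sigma_e$ maps to a transposition in $S_{4g-4}$ swapping the two endpoints $v_1,v_2$ of $e$, and geometrically $\sigma_e$ is (a lift of) the half-twist exchanging two zeros of the quadratic differential along the arc represented by $e$. For such a half-twist the associated symplectic transvection on $H_1$ is exactly $x\mapsto x+\langle x,e\rangle e$ where $e$ now denotes the corresponding vanishing cycle class; but the formula \eqref{generator} already records this on the nose on $\mathbb{Z}_2[E]$. So the whole question reduces to: for $z\in Z_1$, is $\langle z,e\rangle=0$? The third step answers this. Write $e$ as an oriented arc between the two branch points it connects; the intersection number $\langle z,e\rangle$ of a $1$-cycle $z$ with this arc counts (mod $2$) the signed passages of $z$ across $e$. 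Because $z$ is a cycle, $\partial z=0$, and the arc $e$ together with a small disk around each of its endpoints bounds a region; the number of times $z$ crosses $e$ equals, mod $2$, the intersection of $z$ with the \emph{closed} loop obtained by doubling $e$ (going out along $e$ and back), which by homological invariance of the intersection form depends only on the class $[z]\in H_1$ and the class of that loop, and the doubled loop is null-homologous (it bounds). Hence $\langle z,e\rangle=0$.

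I expect the main obstacle to be making the identification in the previous paragraph airtight at the chain level rather than waving at "intersection pairing descends to homology." The cleanest way is: the intersection pairing as Copeland sets it up is a bilinear form on $\mathbb{Z}[E]$, and one should check that $Z_1=\ker\partial$ lies in the \emph{radical} of this form — equivalently that every relation defining the boundary map is orthogonal, under $\langle\cdot,\cdot\rangle$, to every edge. Since $\partial$ and $\langle\cdot,\cdot\rangle$ are both explicitly combinatorial data on the labelled annulus of Figure~\ref{annulus}, this is in principle a finite check using the edge labels $l_i,u_i,b_i$; one verifies that for each vertex $v$ the "star" $1$-chain $\partial^{-1}$-type relation pairs to zero with each $l_i,u_i,b_i$, and that this suffices because $Z_1$ is spanned by sums of such local pieces together with the global cycles of the annulus, all of which are visibly $\langle\cdot,\cdot\rangle$-orthogonal to single edges by the skew-symmetry and the support conditions. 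Once $Z_1\subseteq\mathrm{rad}\langle\cdot,\cdot\rangle$ is established, \eqref{generator} gives $\sigma_e(z)=z+\langle z,e\rangle e=z$ for every generator $\sigma_e$ and every $z\in Z_1$, and since these $\sigma_e$ generate $G_1$ the proposition follows. $\hfill\square$
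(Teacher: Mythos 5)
Your core argument coincides with the paper's: since $\sigma_e(x)=x+\langle x,e\rangle e$ and the pairing of two distinct edges is nonzero mod $2$ exactly when they are adjacent, a cycle $z$ satisfies $\langle z,e\rangle=0$ because $\partial z=0$ forces an even number of edges of $z$ (with multiplicity) to meet each endpoint of $e$ --- this short parity count is the paper's entire proof. The topological detour about doubled arcs, and the assertion that vertex stars are orthogonal to every edge, are unnecessary (and the latter is not literally true: the star at a vertex $v$ of degree $d$ pairs with an incident edge to give $d-1$ mod $2$); all that is needed, and all your final reduction actually uses, is that $\langle z,e\rangle=\sum_{v\in\partial e}\deg_z(v)-2z_e\equiv 0$ for $z\in Z_1$.
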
  

\begin{proof} 
Consider  $a\in C_{1}$ such that $\partial a =0$, i.e., the edges of $a$ have vertices which occur an even number of times. By definition,  $\sigma_{e}\in G_{1}$ acts trivially on $a$ for any edge $e\in E$  non adjacent to $a$.  Furthermore, if $e\in E$ is adjacent to $a$, then $\partial a=0$ implies that an even number of edges in $a$ is adjacent to $e$, and thus the action $\sigma_{e}$ is also trivial on $a$.
\end{proof}

We shall give an ordering to the vertices in $\check{\Gamma}$ as in the figure below, and denote by  $E'\subset E$ the set of dark edges:
\begin{figure}[htbp]
\centering \epsfig{file=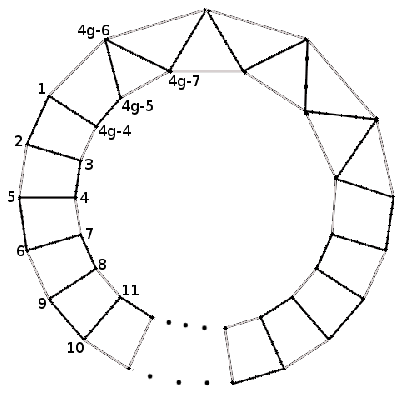, width=6 cm}
%\caption{ $E' \subset E$ in $\check{\Gamma}$}
%\label{grafo1}
\caption{}
\end{figure}

\begin{definition}
 For $(i,j)$ the edge between the vertices $i$ and $j$, the set  $E'$ is given by the edge $e_{4g-4}:=(4g-4,1)$ together with the natural succession of edges $e_{i}:=(i, i+1)$ for $i=1,\ldots,4g-5$.
\end{definition}

\begin{proposition}
 The reflections labelled by the edges in $E' \subset E$ generate a subgroup $S'_{4g-4}$ of $G_{1}$ isomorphic to the symmetric group $S_{4g-4}$. \label{propi1}
\end{proposition}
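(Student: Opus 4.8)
The plan is to exhibit an explicit isomorphism $S'_{4g-4} \cong S_{4g-4}$ by showing that the generators $\sigma_{e_i}$, $e_i \in E'$, satisfy exactly the Coxeter relations of the symmetric group on the $4g-4$ vertices, where $\sigma_{e_i}$ plays the role of the transposition $(i,i+1)$ (indices mod $4g-4$). First I would recall from \eqref{generator} that $\sigma_{e}(x) = x + \langle x,e\rangle e$, so the action of $\sigma_{e_i}$ on an edge $x$ depends only on the parity of the intersection number of $x$ with the edge $e_i = (i,i+1)$. The key computational input is that, for the subdivision of the annulus in Figure~\ref{annulus} with the vertex-ordering fixed above, the intersection pairing $\langle e_i, e_j \rangle$ between two edges in $E'$ is (mod $2$) nonzero precisely when $e_i$ and $e_j$ share a vertex, i.e. when $|i-j| = 1$ cyclically; one checks this from the planar picture, as adjacent edges of the cycle meet transversally at a single vertex while disjoint edges do not meet at all.

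Granting this, the verification of the relations is routine but must be done carefully. For $|i-j|\ge 2$ (cyclically) we have $\langle e_i,e_j\rangle = 0$, so $\sigma_{e_i}$ fixes $e_j$ and more generally $\sigma_{e_i}$ and $\sigma_{e_j}$ commute, since their actions modify $C_1$ only along $e_i$ and $e_j$ respectively and these directions are mutually annihilated by the pairing. For $\sigma_{e_i}^2$: since $\langle e_i,e_i\rangle = 0$ by skew-symmetry over $\mathbb{Z}_2$, and since $\sigma_{e_i}(x) = x + \langle x,e_i\rangle e_i$ gives $\sigma_{e_i}^2(x) = x + 2\langle x,e_i\rangle e_i = x$, each $\sigma_{e_i}$ is an involution. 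The braid relation $(\sigma_{e_i}\sigma_{e_{i+1}})^3 = 1$ for adjacent edges is the one genuine calculation: one computes the action of $\sigma_{e_i}\sigma_{e_{i+1}}$ on the three-dimensional span of $e_{i-1}, e_i, e_{i+1}$ (or rather on whichever edges have nonzero pairing with $e_i$ or $e_{i+1}$) and checks it has order $3$ there, while it is the identity on the complementary edges; this is exactly the computation underlying \cite{cope1}, so I would cite Copeland's Theorem~\ref{copeland} for the pairing values and reduce the relation check to transpositions $(i,i+1)$ in $S_{4g-4}$.

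Finally I would assemble these relations into a surjection $S_{4g-4} \twoheadrightarrow S'_{4g-4}$ via the Coxeter presentation, and argue injectivity: the induced permutation action on $C_0$ (described just before the statement, coming from $\pi_1(\mathcal{A}_{reg}) \to S_{4g-4}$) sends $\sigma_{e_i}$ to the transposition of vertices $i$ and $i+1$, so the composite $S_{4g-4} \to S'_{4g-4} \to S_{4g-4}$ is the identity, forcing the first map to be an isomorphism. Since the $\sigma_{e_i}$ for $e_i\in E'$ are by definition among the generators of $G_1$, the subgroup they generate is the claimed copy of $S_{4g-4}$.

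The main obstacle I anticipate is pinning down the intersection pairing $\langle e_i, e_j\rangle$ over $\mathbb{Z}_2$ precisely from the figures — in particular confirming that consecutive edges of the boundary cycle pair nontrivially and that there are no unexpected extra nonzero pairings forced by the lifted graph structure on $S$ rather than on $\check{\Gamma}$ itself. This is really a bookkeeping issue about Copeland's labelling in Figure~\ref{annulus} and the skew pairing of Theorem~\ref{copeland}, and once it is settled the group-theoretic part (Coxeter relations plus the retraction argument) is essentially formal.
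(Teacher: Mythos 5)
Your proposal follows essentially the same route as the paper: verify the Coxeter relations $\sigma_{e_i}^2=1$, commutation for non-adjacent edges, and the braid relation $(\sigma_{e_i}\sigma_{e_{i+1}})^3=1$ by computing the action on the span of the edges meeting $e_i$ and $e_{i+1}$ (the paper does this via an explicit block matrix $B$ with $B^3=I$ over $\mathbb{Z}_2$). Your additional retraction argument for injectivity --- composing with the permutation action on $C_0$ to rule out a proper quotient of $S_{4g-4}$ --- is a point the paper only supplies implicitly in the discussion following the proposition, where the homomorphism $\alpha:G_1\to S_{4g-4}$ is introduced, so it is a welcome but not divergent addition.
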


\begin{proof}
One needs to check that the following properties characterizing generators of the symmetric group apply to the reflections labelled by $E'$: 
\begin{enumerate}
 \item[(i)] $\sigma_{e_{i}}^{2}=1$ for all $i$,
 \item[(ii)] $\sigma_{e_{i}}\sigma_{e_{j}}=\sigma_{e_{j}}\sigma_{e_{i}}$ if $j\neq i\pm 1 $,
 \item[(iii)] $(\sigma_{e_{i}}\sigma_{e_{i+1}})^{3}=1.$
\end{enumerate}

By equation (\ref{generator}), it is straightforward to see that the properties (i) and (ii) are satisfied by $\sigma_{e_{i}}$ for all $e_{i}\in E$.
In order to check (iii) we shall consider different options for edges adjacent  to $e_{i}$ and $e_{i+1}$ when  $e_{i},e_{i+1}\in E'$.
Let $c_{1},c_{2},\cdots, c_{n}\in E$ be the  edges adjacent to $e_{i}$ and $e_{i+1}$, where $n$ may be $5,6$ or $7$. Taking the basis $\{c_{1}, \cdots, c_{n},e_{i},e_{i+1}\}$, the action  $\sigma_{e_{i}}\sigma_{e_{i+1}}$ is given by  matrices $B$ divided into blocks in the following manner:
\begin{equation}
B:=\left(
\begin{array}{ccc|cc}
   &  &  &   0 & 0 \\
   & I_{n} &   & \vdots & \vdots \\
   &  &  &   0 & 0 \\\hline
 a_{1} &   \cdots   & a_{n} & 0 & 1 \\
 b_{1} &  \cdots     &b_{n}  & 1 & 1
\end{array}
\right),\nonumber
\end{equation}
 where the entries $a_{i}$ and $b_{j}$ are $0$ or $1$, depending on the number of common vertices with the edges and their locations. Over $\mathbb{Z}_{2}$ one has that $B^{3}$ is the identity matrix
and so  property  (iii) is satisfied for all edges $e_{i}\in E'$. \end{proof}

The subgroup $S'_{4g-4}$ preserves $\mathbb{Z}_{2}[E']$, and the boundary $\partial: C_{1}\rightarrow C_{0}$ is compatible with the action of $S_{4g-4}'$ on $C_{1}$ and $S_{4g-4}$ on $C_{0}$.
Thus, from  Proposition \ref{propi1}, there is a natural homomorphism  $$\alpha:G_{1}\longrightarrow S_{4g-4}~,$$ which is an isomorphism when restricted to $S'_{4g-4}$. 

\begin{definition}
We shall denote by  $N$ the kernel  of $\alpha: G_{1}\longrightarrow S_{4g-4}$.
\end{definition}

From the above analysis, in terms of $N$ one has that
\begin{eqnarray}G_{1}=N \ltimes S'_{4g-4}~.\label{semid}\end{eqnarray}
 Any element $g\in G_{1}$ can  be expressed uniquely as $g=s \cdot h$, for $h\in N \subset G_{1}$ and $s \in S'_{4g-4}\subset G_{1}$. The  group action on $C_{1}$ may then be expressed as
\begin{equation}
 (h_{1}s_{1})(h_{2}s_{2})=h_{1}s_{1}h_{2}s_{1}^{-1}s_{1}s_{2}~,\label{conj}
\end{equation}
for $s_{1},s_{2}\in S_{4g-4}'$ and $h_{1},h_{2}\in N$.  In order to understand the subgroup $N$ we make the following definition.
\begin{definition}
 Let $E_{0}:=E-E'$,  and let $\Delta_{e}\in C_{1}$   denote the boundary of the square or triangle adjacent to the edge $e\in E_{0}$  in $\check{\Gamma}$.
\end{definition}

 Note that each edge $e\in E_{0}$ is contained in only one of such boundaries. 

\begin{definition}
 We define  $\tilde{E}_{0}:=\{\Delta_{e}~{\rm for}~e\in E_{0}\}$. 
\end{definition}
From Proposition \ref{trivial} the boundaries  $\Delta_{e}$ are acted on trivially by $G_{1}$. By comparing the action labelled by the  edges in $E_{0}$ with the action of the corresponding transposition in $S'_{4g-4}$ one can find out which elements are in $N$.

\begin{definition}
  We shall denote by $\sigma_{(i,j)}$ the action on $C_{1}$ labeled by the edge $(i,j)$, and let $s_{(i,j)}$ be the element in $S'_{4g-4}$ such that $\alpha(\sigma_{(i,j)})=\alpha(s_{(i,j)})\in S_{4g-4}$, where $\alpha(s_{(i,j)})$ interchanges the vertices of $(i,j)$.
\end{definition}

 Then, we have the following possible relations between the action of elements  $\sigma_{(i,j)}$ and $s_{(i,j)}$ :

\begin{itemize}
\item For a triangle with vertices  $i,i+1,i+2$, the generators of $S'_{3}\subset S'_{4g-4}$ are  given  by $\sigma_{(i,i+1)}$ and $\sigma_{(i+1,i+2)}$. Then,   $(i,i+1),(i+1,i+2) \in E'$ and the action labelled by  the edge $(i,i+2)\in E_{0}$ can be written as:
\[s_{(i,i+2)}=\sigma_{(i+1,i+2)}\sigma_{(i,i+1)}\sigma_{(i+1,i+2)}~.\]

\item For a square with vertices $i,i+1,i+2, i+3$, the generators of $S'_{4} \subset S'_{4g-4}$ are given by  $\sigma_{(i,i+1)}, \sigma_{(i+1,i+2)}$ and $\sigma_{(i+2,i+3)}$. In this case one has that the edges $(i,i+1),(i+1,i+2),(i+2,i+3)\in E'$ and the action labelled by the edge $(i,i+3)\in E_{0}$ can be written as:
\[s_{(i,i+3)}=\sigma_{(i+2,i+3)}\sigma_{(i+1,i+2)}\sigma_{(i,i+1)}\sigma_{(i+1,i+2)}\sigma_{(i+2,i+3)}~.\]
\end{itemize}

\begin{theorem} The action of  $\sigma_{e}$ labelled by $e\in E$ on $x\in C_{1}$ is given by
\[\sigma_{e} (x) =  h_{e}\cdot s_{e} ~(x)~, \]
where $s_{e}\in S'_{4g-4}$ is the element which maps under $\alpha$ to the transposition of the two vertices of $e$, and  the action of $h_{e} \in N$ is given by
\begin{equation}
  h_{e}(x)=\left\{ \begin{array}{ccc}
x&{\rm if}&e \in E'~,\\
x+<e,x>\Delta_{e}&{\rm if}& e\in E_{0}~.
                \end{array}\right. \label{funh}
\end{equation}\label{teomuy}
\end{theorem}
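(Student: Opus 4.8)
The plan is to treat the two cases $e\in E'$ and $e\in E_{0}$ separately. If $e\in E'$ then, by the very definition of the generators of $S'_{4g-4}$, the element $s_{e}$ is $\sigma_{e}$ itself, so $h_{e}:=\sigma_{e}s_{e}^{-1}={\rm id}$, which lies in $N$ and satisfies the stated formula $h_{e}(x)=x$. For $e\in E_{0}$ I would write $\sigma_{e}=h_{e}s_{e}$ with $h_{e}:=\sigma_{e}s_{e}^{-1}$, where $s_{e}\in S'_{4g-4}$ is the element with $\alpha(s_{e})=\alpha(\sigma_{e})$ equal to the transposition of the two endpoints of $e$; then $\alpha(h_{e})=1$, i.e.\ $h_{e}\in N=\ker\alpha$. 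So the only real content is the explicit formula for $h_{e}$ when $e\in E_{0}$.

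Here $e$ is the closing edge of a unique triangular or quadrilateral face of $\check\Gamma$, whose remaining edges $a,b$ (triangle) or $a,b,c$ (quadrilateral) lie in $E'$, so that $\Delta_{e}=a+b+e$, respectively $\Delta_{e}=a+b+c+e$. For these edges $s_{e}$ is the three-fold, respectively five-fold, product of the generators $\sigma_{a},\sigma_{b}$ (and $\sigma_{c}$) displayed just before the statement. I would substitute these into the transvection formula $\sigma_{e'}(x)=x+\langle x,e'\rangle e'$ of (\ref{generator}), using the mod $2$ intersection numbers among the edges of a single face (adjacent edges pair to $1$ and edges with no common vertex pair to $0$, as forced by $s_{e}$ inducing the transposition on $C_{0}$, cf.\ the proof of Proposition \ref{trivial}). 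Unwinding the triangle product gives $s_{e}(x)=x+(\langle x,a\rangle+\langle x,b\rangle)(a+b)$, and then, applying $\sigma_{e}$ and using $\langle a+b,e\rangle=0$,
\[ h_{e}(x)=\sigma_{e}\bigl(s_{e}^{-1}(x)\bigr)=x+(\langle x,a\rangle+\langle x,b\rangle)(a+b)+\langle x,e\rangle e; \]
the quadrilateral case is the same computation with one extra conjugation layer and yields $h_{e}(x)=x+(\langle x,a\rangle+\langle x,b\rangle+\langle x,c\rangle)(a+b+c)+\langle x,e\rangle e$.

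The final step is to collapse these expressions to $x+\langle e,x\rangle\Delta_{e}$. Since $\Delta_{e}$ is a face boundary it is a cycle, $\Delta_{e}\in Z_{1}=\ker\partial$, so by Proposition \ref{trivial} every $\sigma_{e'}$ fixes $\Delta_{e}$; as $\sigma_{e'}(\Delta_{e})=\Delta_{e}+\langle\Delta_{e},e'\rangle e'$, this forces $\langle\Delta_{e},e'\rangle=0$ for all $e'\in E$, hence $\langle\Delta_{e},y\rangle=0$ for every $y\in C_{1}$. Taking $y=x$ gives $\langle x,a\rangle+\langle x,b\rangle=\langle x,e\rangle$ in the triangle case (and the analogous identity for the quadrilateral), and substituting back one gets $h_{e}(x)=x+\langle x,e\rangle(a+b+e)=x+\langle e,x\rangle\Delta_{e}$, as claimed. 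The same identity $\langle\Delta_{e},\cdot\rangle=0$ also gives $h_{e}^{2}={\rm id}$ and $h_{e}s_{e}=s_{e}h_{e}$, consistent with $\sigma_{e}^{2}={\rm id}$.

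I expect the only genuine work to be the bookkeeping in the middle step --- running the transvection formula through the five-term product for the quadrilateral and keeping track of which pairs of edges of a face meet --- together with making sure $s_{e}$ is correctly matched with the transposition of the endpoints of $e$. Once the face intersection numbers and the orthogonality $\langle\Delta_{e},\cdot\rangle=0$ are in place, the collapse to $\langle e,x\rangle\Delta_{e}$ is forced, so I do not anticipate a serious obstacle beyond that computation.
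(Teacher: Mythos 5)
Your proposal is correct and follows essentially the same route as the paper: both reduce the theorem to comparing $\sigma_{e}$ with the product of generators $s_{e}$ on a single face and both invoke Proposition \ref{trivial} (equivalently, the vanishing of $\langle\Delta_{e},\cdot\rangle$) to identify the discrepancy with the transvection $x\mapsto x+\langle e,x\rangle\Delta_{e}$. The only difference is one of detail: the paper asserts the key identity $\sigma_{e}(x)=s_{e}(x)+\Delta_{e}$ for $x$ adjacent to $e$ with minimal justification, while you carry out the unwinding of the three- and five-fold products explicitly, and your bookkeeping checks out.
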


\begin{proof}For $e\in E'$, one has $\sigma_{e}\in S'_{4g-4}$. Furthermore, one can see that the action of $\sigma_{e}$ labelled by $e\in E_{0}$ on an adjacent edge $x$ is given by 
\[\sigma_{e}(x)=x+<x,e>e_{i}=s_{e}(x)+\Delta_{e}~.\]
From Proposition \ref{trivial}, the boundary $\Delta_{e}$ is acted on trivially by $G_{1}$ and thus the above action is given by
\[ h_{e}\cdot  s_{e} (x)
=h_{e}(\sigma_{e}(x)+\Delta_{e})%\nonumber\\
%&=&h_{e}(x+<x,e_{i}>e_{i}+\Delta_{e})\nonumber\\
=h_{e}(x)+<x,e_{i}>e_{i}+\Delta_{e}%\nonumber\\
%&=&x+<x,e>e_{i}\nonumber\\
=\sigma_{e}(x).
\]
\end{proof}

\begin{remark}\label{remi2} Note that for $e,e'\in E_{0}$, the maps $h_{e}$ and $h_{e'}$ satisfy
\begin{eqnarray}
 h_{e}h_{e'}(x)
%&=&h_{e_{1}}(x+<x,e_{2}>\Delta_{e_{2}})\nonumber\\
&=& x+<x,e'>\Delta_{e'}+<e,x>\Delta_{e}~. \label{note}
\end{eqnarray}
\end{remark}

\section{A representation of the action on $\mathbb{Z}_{2}[E]$} \label{sec:group2}

In order to construct a representation for the action of $\pi_{1}(\A)$ on $C_{1}$, we shall begin by studying the image $B_{0}$ and the kernel $Z_{1}$ of $\partial: C_{1} \rightarrow C_{0}$.

\begin{definition}
 For $y=(y_{1},\dots, y_{4g-4})\in C_{0}$,  we define the linear map $f:C_{0}\rightarrow \mathbb{Z}_{2}$   by
\begin{equation}
 f(y)=\sum_{i=1}^{4g-4}y_{i} ~.\label{effe}
\end{equation}
\end{definition}

\begin{proposition}
The image  $B_{0}$ of $\partial: C_{1} \rightarrow C_{0}$ is formed by elements with an even number of $1$'s, i.e.,  $B_{0}=\ker f.$\label{propi2}
\end{proposition}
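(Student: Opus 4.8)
The plan is to prove the two inclusions $B_0 \subseteq \ker f$ and $\ker f \subseteq B_0$ separately, using the fact that $\check\Gamma$ is connected.

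First I would show $B_0 \subseteq \ker f$. It suffices to check that $f(\partial e) = 0$ for every edge $e \in E$, since $\partial$ is linear and the $e$ span $C_1$. But $\partial e = v_1 + v_2$ where $v_1, v_2$ are the two (distinct) endpoints of $e$, so $f(\partial e) = 1 + 1 = 0$ in $\mathbb{Z}_2$. Hence the image of $\partial$ lies in $\ker f$.

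For the reverse inclusion $\ker f \subseteq B_0$, I would argue that any $0$-chain $y = v_{i_1} + \cdots + v_{i_{2m}}$ supported on an \emph{even} number of vertices (counted with multiplicity mod $2$, so on an even-size subset of distinct vertices) is a boundary. Since the graph $\check\Gamma$ is connected, any two vertices $v_a, v_b$ are joined by a path, and the boundary of the sum of the edges along that path equals $v_a + v_b$ (interior vertices cancel mod $2$). Pairing up the $2m$ vertices of $y$ arbitrarily and summing the corresponding path-chains produces an element of $C_1$ whose boundary is exactly $y$. Therefore $\ker f \subseteq B_0$, and combined with the first inclusion we get $B_0 = \ker f$. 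Finally, $\ker f$ is visibly the set of elements of $C_0 \cong \mathbb{Z}_2^{4g-4}$ with an even number of $1$'s, which is the stated description.

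The only point requiring care — and the mild obstacle here — is justifying that $\check\Gamma$ is connected so that the path-joining argument goes through; this is immediate from the explicit description of $\check\Gamma$ (a ring of $8$ triangles together with $2g-6$ quadrilaterals sharing edges, as in Figure \ref{annulus}), which is manifestly connected. One also notes $\dim_{\mathbb{Z}_2} B_0 = 4g-5$, consistent with $\dim Z_1 = |E| - (4g-5)$, though this dimension count is not needed for the statement itself.
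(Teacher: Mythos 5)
Your proof is correct and follows essentially the same route as the paper: the paper's elements $R^{k}=\partial(e_{1}+\cdots+e_{k-1})$ are exactly the boundaries of paths from vertex $1$ to vertex $k$ along the distinguished edges $e_{i}=(i,i+1)\in E'$, so your path-joining argument via connectedness of $\check{\Gamma}$ is the same idea in slightly more general language. Both arguments reduce to the observation that every even-weight $0$-chain is a sum of pairs $v_{a}+v_{b}$, each of which is the boundary of a path chain.
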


\begin{proof}It is clear that $B_{0}\subset \ker f$. In order to check surjectivity we consider the edges  $e_{i}=(i,i+1)\in E'$, for $i=1, \dots,4g-5$. 
%For these edges, $\partial e_{i}$ is a vector in $C_{0}$ which has only two non-zero entries in the places corresponding to $i$ and $i+1$.
Given the elements $R^{k}\in B_{0}$ for $k=2,\cdots,4g-4$ defined as
 \begin{eqnarray}
R^{2}&:=&\partial e_{1}  =(1,1,0,\dots,0)~,\nonumber\\
%R^{3}&=&R^{2}+\partial e_{2}=(1,0,1,\dots,0)\label{eres}
%\\
&\vdots&\nonumber\\
R^{k}&:=& R^{k-1}+\partial e_{k-1}=(1,0,\dots,0,1,0,\dots,0),\nonumber
 \end{eqnarray}
one may generate any distribution of an even number of $1$'s. Hence,  $B_{0}={\rm span}\{R^{k}\}$ which is the kernel of $f$. \end{proof}

\begin{proposition} The dimensions of the image $B_{0}$ and the kernel $Z_{1}$ of the derivative $~\partial: C_{1} \rightarrow C_{0}~$ are, respectively,  $5g-5$ and $2g+3$.
\end{proposition}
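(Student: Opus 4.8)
The plan is to deduce both numbers from the rank--nullity identity $\dim C_{1}=\dim Z_{1}+\dim B_{0}$, using that the image $B_{0}$ has already been identified in Proposition \ref{propi2}. So there are really only two quantities to pin down: $\dim C_{1}=|E|$, and the corank in $C_{0}$ of the functional $f$ of (\ref{effe}).

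First I would compute $|E|$. The quickest route is to read it off the notation of Proposition \ref{prim}: the edge set $E$ of $\check{\Gamma}$ consists of the $2g-2$ edges $l_{i}$, the $2g-2$ edges $u_{i}$ and the $2g+2$ edges $b_{i}$, so $|E|=(2g-2)+(2g-2)+(2g+2)=6g-2$. There are two consistency checks. On the one hand $E$ splits as $E'\sqcup E_{0}$, with $|E'|=4g-4$ (the cyclic edges $e_{i}$) and $|E_{0}|$ equal to the number of faces of $\check{\Gamma}$, since each triangle and each quadrilateral of the ring contributes exactly one edge of $E_{0}$; for $g\ge 3$ this gives $|E_{0}|=8+(2g-6)=2g+2$, hence again $|E|=6g-2$. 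On the other hand Euler's formula for the cell structure on the annulus, $|V|-|E|+|F|=\chi=0$ with $|V|=4g-4$ and $|F|=2g+2$, forces the same value.

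Next, by Proposition \ref{propi2} one has $B_{0}=\ker f$, and $f\colon C_{0}\to\mathbb{Z}_{2}$ is surjective since it sends a single vertex to $1$; as $\dim C_{0}$ equals the number $4g-4$ of vertices of $\check{\Gamma}$, this gives $\dim B_{0}=\dim C_{0}-1=4g-5$. Substituting into rank--nullity, $\dim Z_{1}=\dim C_{1}-\dim B_{0}=(6g-2)-(4g-5)=2g+3$, which together with the previous line proves the proposition. The only step carrying any real content is the edge count, i.e.\ keeping the combinatorics of $\check{\Gamma}$ straight; once $|E|=6g-2$ is established everything else is immediate from Proposition \ref{propi2} and linear algebra over $\mathbb{Z}_{2}$. (Genus $g=2$ is excluded, as there the base $\A$ has a different fundamental group and is handled separately.)
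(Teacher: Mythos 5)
Your proof is correct and follows essentially the same route as the paper's: it invokes Proposition \ref{propi2} to get $\dim B_{0}=\dim\ker f=\dim C_{0}-1=4g-5$ and then applies rank--nullity with $\dim C_{1}=|E|=6g-2$; the only difference is that you verify the edge count $|E|=(2g-2)+(2g-2)+(2g+2)=6g-2$ (with consistent cross-checks via $E'\sqcup E_{0}$ and Euler's formula), whereas the paper takes it for granted. Note that both your argument and the paper's own proof yield $\dim B_{0}=4g-5$, so the value $5g-5$ printed in the statement is a typo --- indeed $B_{0}\subset C_{0}$ and $\dim C_{0}=4g-4<5g-5$ --- and $4g-5$ is the value consistent with the rest of the paper (e.g.\ the quotient $B_{0}/\langle(1,\dots,1)\rangle\cong\mathbb{Z}_{2}^{4g-6}$ in (\ref{exact})).
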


\begin{proof} From Prop. \ref{propi2} one has that $\dim(B_{0})=\dim(\ker f)=4g-5$. Furthermore, as $\dim C_{1}=\dim Z_{1} + \dim B_{0},$ the kernel $Z_{1}$ of $\partial$ has dimension $2g+3$. \end{proof}

Note that  $x_{1},x_{2},x_{4}\in Z_{1}$ and $x_{5}\notin Z_{1}$. From a homological viewpoint one can see that $x_{4}$ and  $\Delta_{e}~{\rm for}~e\in E_{0}$ form a basis for the kernel $Z_{1}$. We can extend this to a basis of $C_{1}$ by taking the edges $$\beta':=   \{e_{i}=(i,i+1) ~{\rm for}~ 1\leq i\leq 4g-5\}\subset E'~,$$ whose images under $\partial$ form a basis for $B_{0}$, and hence a basis for a complementary subspace $V$ of $Z_{1}$.

\begin{definition}
  We   denote by $\beta:=\{\beta_{0},\beta'\}$ the basis of $C_{1}$, for $\beta_{0}:=\{\tilde{E}_{0},x_{4}\}$.
\end{definition}

In order to generate the whole group $G_{1}$,  we shall study the action of $S'_{4g-4}$  by conjugation on $N$. Considering the basis $\beta$  one may construct a matrix representation of the maps $h_{e_{i}}$ for $e_{i} \in  E=\{E_{0},E'\}$. 

\begin{proposition} For $e\in E$, the matrix $[h_{e}]$ associated to $h_{e}$ in the basis $\beta$  is given by
\begin{equation}
 [h_{e}]=\left(\begin{array} {c|c}
  I_{2g+3}&A_{e}\\ \hline
0&I_{4g-5}\\
 \end{array}
\right)~,\nonumber
\end{equation}
\noindent  where the $(2g+3) \times (4g-5)$ matrix $A_{e}$ satisfies one of the following:
\begin{itemize}
 \item it is the zero matrix for $e\in E'$,
 \item it has only four non-zero entries  in the intersection of the row corresponding to $\Delta_{e}$ and the columns corresponding to an adjacent edge of $e$, for $e\in E_{0}-\{u_{5},l_{6}\}$,
 \item it has three non-zero entries  in the intersection of the row corresponding to $\Delta_{e}$ and the columns corresponding to an adjacent edge of $e$, for $e=u_{5},l_{6}$.
\end{itemize}
\end{proposition}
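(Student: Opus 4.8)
The plan is to compute the matrix $[h_e]$ directly from the defining formula \eqref{funh} for $h_e$, using the chosen basis $\beta=\{\beta_0,\beta'\}$ with $\beta_0=\{\tilde E_0,x_4\}$ spanning $Z_1$ and $\beta'=\{e_i=(i,i+1):1\le i\le 4g-5\}$ mapping to a basis of $B_0$. First I would dispose of the case $e\in E'$: by Theorem~\ref{teomuy} we have $h_e(x)=x$ for all $x$, so $A_e$ is the zero matrix and $[h_e]=I_{6g-6}$, which is of the stated block form. For $e\in E_0$ the formula reads $h_e(x)=x+\langle e,x\rangle\Delta_e$, and since $\Delta_e\in\tilde E_0\subset\beta_0\subset Z_1$, the image of every basis vector differs from itself only by a multiple of a single element of $\beta_0$. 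This immediately shows $[h_e]$ has the block shape $\left(\begin{smallmatrix} I_{2g+3}&A_e\\ 0&I_{4g-5}\end{smallmatrix}\right)$: the top-left block is $I_{2g+3}$ because $\langle e,\cdot\rangle$ vanishes on $Z_1$ by the skew-symmetry argument used in Proposition~\ref{trivial} (any cycle meets a given edge in an even number of edges), the bottom blocks are $0$ and $I_{4g-5}$ because $h_e$ fixes $Z_1$ pointwise and acts as identity plus a $Z_1$-valued correction, and the only nonzero entries of $A_e$ sit in the single row indexed by $\Delta_e$.

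Next I would pin down the entries of that distinguished row of $A_e$. For a basis edge $e_j=(j,j+1)\in\beta'$, the coefficient of $\Delta_e$ in $h_e(e_j)$ is $\langle e,e_j\rangle\in\mathbb{Z}_2$, which is $1$ precisely when $e_j$ is adjacent to $e$, i.e. when $e_j$ shares exactly one vertex with $e$. So I must count, for each $e\in E_0$, how many of the consecutive edges $(j,j+1)$ are adjacent to $e$. For a chord $e=(i,i+2)$ of a triangle the adjacent edges among the $e_j$ are $(i-1,i)$, $(i,i+1)$, $(i+1,i+2)$, $(i+2,i+3)$ — generically four of them. For a chord $e=(i,i+3)$ of a quadrilateral the adjacent edges are $(i-1,i),(i,i+1),(i+2,i+3),(i+3,i+4)$ — again generically four. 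This gives the generic "four non-zero entries in the row corresponding to $\Delta_e$ and the columns of the edges adjacent to $e$" statement.

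The remaining point — and the only real obstacle — is the exceptional behaviour at $e=u_5$ and $e=l_6$, where I claim there are only three nonzero entries. Here I would go back to the explicit labelling of $\check\Gamma$ in Figure~\ref{annulus}: the ring structure means that one of the four a-priori adjacent consecutive edges $e_j$ for these two chords either coincides with $e$ itself (so it contributes $\langle e,e\rangle=0$ by skew-symmetry) or is not in $\beta'$ but rather lies in $E_0$ or wraps around as $e_{4g-4}=(4g-4,1)$, which is excluded from $\beta'$; in either case one of the four potential entries drops out. So the careful bookkeeping is: trace through Copeland's edge labels, identify for $u_5$ and $l_6$ which of the four neighbouring edges fails to be a member of $\beta'$, and conclude exactly three surviving entries. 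Everything else is a routine verification against \eqref{funh}, and the block-triangular form with $I_{2g+3}$ and $I_{4g-5}$ on the diagonal is essentially forced once one knows $h_e-\mathrm{id}$ takes values in $\mathrm{span}(\Delta_e)\subset Z_1$ and annihilates $Z_1$.
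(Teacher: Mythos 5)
Your proposal is correct and follows essentially the same route as the paper: read off $[h_e]$ from the formula $h_e(x)=x+\langle e,x\rangle\Delta_e$, note that $h_e$ fixes $Z_1$ (hence $\beta_0$) pointwise so the only nonzero row of $A_e$ is the one indexed by $\Delta_e$, and count the edges of $\beta'$ adjacent to $e$ (four generically, three for $u_5,l_6$ because one of the four $E'$-neighbours is the excluded edge $e_{4g-4}=(4g-4,1)$). The paper's proof is just a terser version of the same bookkeeping.
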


\begin{proof}As we have seen before, for $e\in E_{0}$ the map $h_{e}$ acts as the identity on the elements of $\beta_{0}$. Furthermore, any edge $e\in E_{0}-\{u_{5},l_{6}\}$ is adjacent to exactly four edges in $\beta'$. In this case $h_{e}$ has exactly four non-zero elements in the intersection of the row corresponding to $\Delta_{e}$ and the columns corresponding to edges in $\beta'$ adjacent to $e$. In the case of $e=u_{5},l_{6}$, the edge $e$ is adjacent to exactly 3 edges in $\beta'$ and thus $h_{e}$ has only 3 non-zero entries. \end{proof}

Recall that $S'_{4g-4}$ preserves the space spanned by $E'$, and hence also the subspace $V$ spanned by $\beta'$, and acts trivially on $Z_{1}$. In the basis $\beta$ the action of an element $s\in S'_{4g-4}$ has a matrix representation given by
\begin{equation}
 [s]=\left(\begin{array} {c|c}
  I_{2g+3}&0\\ \hline
0&\pi_{s}\\
 \end{array}
\right)~,\nonumber
\end{equation}
where $\pi$ is the permutation action corresponding to $s$. Hence,  for $f\in E_{0}$ we may construct the matrix for a conjugate of $h_{f}$ as follows:
\begin{eqnarray}
[s] [h_{f}] [s]^{-1}&=&
 \left(\begin{array} {c|c}
  I_{2g+3}&0\\ \hline
0&\pi_{s}\\
 \end{array}
\right) 
%%%%%%%%%%%%
\left(\begin{array} {c|c}
  I_{2g+3}&A_{f}\\ \hline
0&I_{4g-5}\\
 \end{array}
\right) 
%%%%%%%5
\left(\begin{array} {c|c}
  I_{2g+3}&0\\ \hline
0&\pi_{s}^{-1}\\
 \end{array}
\right)%\nonumber\\
%%%%%%%%%%%%%5
=\left(\begin{array} {c|c}
  I_{2g+3}&A_{f}\pi_{s}^{-1}\\ \hline
0&I_{4g-5}\\
 \end{array}
\right).\nonumber
\end{eqnarray}

\begin{proposition} The normal subgroup $N \subset G_{1}$  consists of all matrices of the form
\begin{equation}
H=\left(\begin{array} {c|c}
  I_{2g+3}&A\\ \hline
0&I_{4g-5}\\
 \end{array}
\right)\nonumber
\end{equation}  

\noindent where $A$ is any matrix whose rows corresponding to $\Delta_{e}$ for $e\in E_{0}-\{u_{5},l_{6}\}$ have an even number of $1$'s, the row corresponding to $x_{4}$ is zero and the rows corresponding to $\Delta_{u_{5}},\Delta_{l_{6}}$ have any distribution of $1$'s.   
\label{muy}
\end{proposition}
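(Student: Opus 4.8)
The goal is to identify the normal subgroup $N = \ker(\alpha)$ with an explicit set of unipotent matrices. The plan is to argue by two inclusions. For the easy inclusion, note that $N$ is the kernel of $\alpha$ restricted to $G_1$, and every element of $G_1$ has, in the basis $\beta$, a block-triangular form $\left(\begin{smallmatrix} I & * \\ 0 & \pi \end{smallmatrix}\right)$; an element lies in $N$ precisely when $\pi$ is trivial, i.e. the matrix has the form $H$ displayed in the statement. So the content is entirely in pinning down \emph{which} upper-right blocks $A$ actually occur, and here I would use Theorem \ref{teomuy} together with equations (\ref{funh}) and (\ref{note}): the maps $h_e$ for $e\in E_0$ generate (as $h_e^2 = 1$ and they commute, by Remark \ref{remi2}) an abelian 2-group, and conjugating them by $S'_{4g-4}$ as computed just above the proposition replaces $A_f$ by $A_f\pi_s^{-1}$, i.e. permutes columns. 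Since $N$ is normal in $G_1 = N\ltimes S'_{4g-4}$ and contains each $h_e$, it must contain the subgroup generated by all $\{[s][h_f][s]^{-1} : s\in S'_{4g-4}, f\in E_0\}$ — that gives a lower bound on $N$.

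\textbf{Key steps, in order.} First I would record that the block-triangular shape forces any $H\in N$ to have $A$ with zero row in the slot of $x_4$ (since $h_e$ acts trivially on all of $\beta_0 = \{\tilde E_0, x_4\}$, and $S'_{4g-4}$ acts trivially on $Z_1 \supset \langle x_4,\tilde E_0\rangle$, this property is preserved under the group operations — the whole $(2g+3)$-block acts as the identity on $Z_1$, but the $x_4$-row of $A$ being zero is the extra constraint beyond $\tilde E_0$-rows, which can be nonzero). Second, for each $e\in E_0$, I would enumerate the columns (edges of $\beta'$) adjacent to $e$: four of them for $e\in E_0-\{u_5,l_6\}$, three for $e=u_5,l_6$, exactly as in the preceding proposition. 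Third, I would show that the column-permutation action of $S_{4g-4}$ on the $\Delta_e$-row of $h_e$ generates, within the row space, all vectors with an even number of $1$'s when the starting vector has an even number ($e\neq u_5,l_6$), and all vectors when the starting vector has an odd number ($e=u_5,l_6$): this is the same elementary fact used in Proposition \ref{propi2} (the orbit of a fixed-parity $0/1$ vector under $S_n$, plus $\mathbb{Z}_2$-span, is the full parity class, and once the odd-parity class appears it spans everything). Fourth, I would combine rows: since distinct $\Delta_e$ rows can be populated independently (the $h_e$ commute and multiply by adding their $A$-blocks, by (\ref{note})), the set of achievable $A$ is exactly the direct sum over rows of the per-row spans just described — which is precisely the description in the statement. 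Finally I would check the reverse inclusion: every such $H$ is a product of conjugates $[s][h_f][s]^{-1}$, hence lies in $N$ — this is immediate from the spanning statements in steps three and four.

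\textbf{Main obstacle.} The routine-but-delicate part is step three: one must verify that for the two exceptional edges $u_5$ and $l_6$, which are adjacent to an \emph{odd} number (three) of edges in $\beta'$, the $S_{4g-4}$-orbit together with $\mathbb{Z}_2$-spanning of a single weight-$3$ vector really fills the whole $(4g-5)$-dimensional row space — i.e. that the relevant transpositions move these three columns around enough, given that the ring-with-triangles-and-quadrilaterals geometry of $\check\Gamma$ (Figures \ref{grafo4}, \ref{annulus}) constrains which transpositions are available. Concretely one checks that the adjacent edges of $u_5, l_6$ sit among the $e_i = (i,i+1)$ in positions that the symmetric group acts transitively on, so weight-$1$ vectors are reached, hence everything. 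The other genuinely load-bearing point is the claim that the $x_4$-row of $A$ is forced to vanish while the $\tilde E_0$-rows are unconstrained in parity — this requires knowing that $x_4$ is \emph{not} in the span of the $\Delta_e$ (it is part of the chosen basis $\beta_0$) and that no generator or conjugate thereof ever populates the $x_4$-row, which follows because each $h_e$ adds only a multiple of $\Delta_e \in \tilde E_0$.
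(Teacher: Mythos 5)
Your proposal is correct and follows essentially the same route as the paper's proof: both generate $N$ by the conjugates $[s][h_f][s]^{-1}$, use the column action $A_f\mapsto A_f\pi_s^{-1}$ together with Remark \ref{remi2} to fill each $\Delta_e$-row with its full parity class (even for the four-adjacency edges, unrestricted for $u_5,l_6$), and note that the $x_4$-row is never populated. The only difference is cosmetic: you phrase the per-row spanning as an orbit-plus-$\mathbb{Z}_2$-span fact and state the reverse inclusion explicitly, whereas the paper constructs the vectors $R^k$ directly and leaves the upper bound implicit.
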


\begin{proof} Given $e_{i} \in E_{0}-\{u_{5},l_{6}\}$,  the matrix $A_{e_{i}}$ has only four non-zero entries in the row corresponding to $\Delta_{e_{i}}$. Thus, for $g>2$ , there exist  elements $s_{1},s_{2}\in S'_{4g-4}$ with associated permutations $\pi_{1}$ and $\pi_{2}$ such that the matrix
$\tilde{A}_{e_{i}}:=A_{e_{i}}\pi_{1}^{-1}+A_{e_{i}}\pi_{2}^{-1}$ has only two non-zero entries in the row corresponding to $\Delta_{e_{i}}$, given by the vector $R^{5}$  defined in the proof of Proposition \ref{propi2}.
Furthermore, by Remark \ref{remi2}, we have
\begin{equation}[s_{1}h_{e_{i}}s^{-1}_{1}s_{2}h_{e_{i}}s^{-1}_{2}]
%&=&[s_{1}h_{e_{i}}s^{-1}_{1}][s_{2}h_{e_{i}}s^{-1}_{2}]
=
\left(\begin{array} {c|c}
  I_{2g+3}&A_{e_{i}}\pi_{1}^{-1}\\ \hline
0&I_{4g-5}\\
 \end{array}
\right)\left(\begin{array} {c|c}
  I_{2g+3}&A_{e_{i}}\pi_{2}^{-1}\\ \hline
0&I_{4g-5}\\
 \end{array}
\right)
%&=&
=\left(\begin{array} {c|c}
  I_{2g+3}&\tilde{A}_{e_{i}}\\ \hline
0&I_{4g-5}\\
 \end{array}
\right).\nonumber
\end{equation}
Considering different $s\in S'_{4g-4}$ acting on $s_{1}h_{e_{i}}s^{-1}_{1}s_{2}h_{e_{i}}s^{-1}_{2}$, one can obtain the matrices $\{A_{e_{i}}^{k}\}_{k=2}^{4g-5}$ with $R^{k}$ as the only non-zero row corresponding to $\Delta_{e_{i}}$.
Thus, by composing the elements of $N$ to which each $A^{k}_{e_{i}}$ corresponds, we can obtain any possible distribution of an even number of $1$'s in the only non-zero row.

Similar arguments  can be used for the matrices corresponding to $u_{5},l_{6}$ which in this case may have any number of $1$'s in the only non-zero row. 
From Remark \ref{remi2} we are then able to generate any matrix $A\in N$ as described in the proposition.\end{proof}

From (\ref{semid}), recall that $G_{1}$ is a semi-direct product of $S'_{4g-4}$ and $N$. Thus, from the description of the action of the semi direct product  in Theorem \ref{teomuy}, and the study of the kernel $N$ of Proposition \ref{muy},   we have the following theorem:

\begin{theorem}\label{arribat}
 The representation of any $\sigma \in G_{1}$ in the basis $\beta$ can be written as
\begin{equation}
 [\sigma]=\left(\begin{array} {c|c}
I&A\\\hline
0&\pi                 
                \end{array}
\right) ~,
\end{equation}
where  $\pi$ represents a permutation on $\mathbb{Z}_{2}^{4g-5}$ and  $A$ is a matrix whose rows corresponding to $\Delta_{e}$ for $e\in E_{0}-\{u_{5},l_{6}\}$ have an even number of $1$'s, the row corresponding to $x_{4}$ is zero and the rows corresponding to $\Delta_{u_{5}}$ and $\Delta_{l_{6}}$ have any distribution of $1$'s.
\end{theorem}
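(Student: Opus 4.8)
The plan is to assemble the statement by combining the semidirect product decomposition $G_{1} = N \ltimes S'_{4g-4}$ from~(\ref{semid}) with the explicit matrix descriptions already obtained for each factor. First I would recall that, in the chosen basis $\beta = \{\beta_{0}, \beta'\}$, Theorem~\ref{teomuy} writes any generator $\sigma_{e}$ as $h_{e}\cdot s_{e}$ with $s_{e}\in S'_{4g-4}$ and $h_{e}\in N$, and that $S'_{4g-4}$ preserves the splitting $C_{1} = Z_{1}\oplus V$ (acting trivially on $Z_{1}$, by a permutation $\pi_{s}$ on the span $V$ of $\beta'$), while every element of $N$ has matrix of the block-upper-triangular shape $\left(\begin{smallmatrix} I_{2g+3} & A \\ 0 & I_{4g-5}\end{smallmatrix}\right)$. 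Multiplying an element $[s]$ of $S'_{4g-4}$ by an element $[H]$ of $N$ in either order therefore yields a block matrix $\left(\begin{smallmatrix} I & A' \\ 0 & \pi\end{smallmatrix}\right)$ with $\pi$ a permutation matrix; this shows every element of $G_{1}$ has the asserted form.

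Next I would pin down exactly which $A$ occur. The constraints on the rows of $A$ — even number of $1$'s in the rows indexed by $\Delta_{e}$ for $e\in E_{0}\setminus\{u_{5},l_{6}\}$, zero row for $x_{4}$, arbitrary rows for $\Delta_{u_{5}},\Delta_{l_{6}}$ — are precisely the description of $N$ established in Proposition~\ref{muy}. So it remains to verify that the set of matrices satisfying these row conditions, together with all permutation matrices $\pi$ in the lower-right block, is closed under multiplication; this is exactly the content of the group law~(\ref{conj}) for the semidirect product: $(h_{1}s_{1})(h_{2}s_{2}) = h_{1}\,(s_{1}h_{2}s_{1}^{-1})\,s_{1}s_{2}$, where $s_{1}h_{2}s_{1}^{-1}\in N$ again by normality (and concretely because conjugating $[h_{f}]$ by $[s]$ only replaces $A_{f}$ by $A_{f}\pi_{s}^{-1}$, which permutes columns and hence preserves the row-parity conditions). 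Thus $G_{1}$ is contained in, and by Proposition~\ref{muy} equals, the group of all such matrices.

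The only genuine point to argue carefully — and the step I expect to be the main obstacle — is the claim that $\pi$ ranges over \emph{all} of $S_{4g-4}$ (equivalently, that $\alpha:G_{1}\to S_{4g-4}$ is surjective), and dually that $N$ is exactly the kernel $\left(\begin{smallmatrix} I & A \\ 0 & I\end{smallmatrix}\right)$ with $A$ as described, not a proper subgroup. Surjectivity onto $S_{4g-4}$ is Proposition~\ref{propi1}: the reflections labelled by $E'$ satisfy the Coxeter relations (i)–(iii) for the standard generators of $S_{4g-4}$, so $S'_{4g-4}\cong S_{4g-4}$ maps isomorphically under $\alpha$. The identification of $N$ with the prescribed matrix group is Proposition~\ref{muy}, whose proof uses $g>2$ to have enough room in $S'_{4g-4}$ to realize the vectors $R^{k}$ as single nonzero rows. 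I would therefore present the proof as: (1) cite Theorem~\ref{teomuy} and the block forms of $[s]$ and $[h_{e}]$ to get the shape; (2) cite Proposition~\ref{propi1} to identify the $\pi$-part with an arbitrary permutation; (3) cite Proposition~\ref{muy} to identify the $A$-part; (4) invoke~(\ref{conj}) to confirm closure, completing the description of $G_{1}$.
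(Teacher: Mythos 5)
Your proposal is correct and follows essentially the same route as the paper: the theorem there is stated as an immediate consequence of the semidirect decomposition $G_{1}=N\ltimes S'_{4g-4}$, the block form of the generators from Theorem \ref{teomuy}, and the identification of $N$ in Proposition \ref{muy}. The extra details you supply (closure under the group law via conjugation permuting columns of $A_{f}$, and surjectivity of $\alpha$ via Proposition \ref{propi1}) are exactly what the paper leaves implicit.
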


\begin{remark}
 Note that Theorem \ref{arribat} implies that as $\sigma$ runs through all elements in $G_{1}$, all possible matrices $A$ as described in the theorem appear.
\end{remark}

\section{ The monodromy action of $\pi_{1}(\A)$ on $P[2]$} \label{sec:group3}

As seen previously,  $P[2]$ can be obtained as the quotient of $C_{1}$ by the four relations $x_{1},x_{2},x_{4}$ and $x_{5}$.

\begin{proposition}
 The relations $x_{1},x_{2},x_{4}$ and $x_{5}$  are preserved by the action of $\pi_{1}(\A)$. 
\end{proposition}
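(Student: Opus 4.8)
The plan is to show that each generator $\sigma_e$ of $\pi_1(\A)$ (for $e\in E$) maps each of $x_1,x_2,x_4$ into the span of $\{x_1,x_2,x_4,x_5\}$, and that $x_5$ is taken to an element of the same span modulo $Z_1$-boundaries that already vanish in $P[2]$; since these four relations generate the subspace by which we quotient to form $P[2]$, invariance of the subspace follows. By the semidirect product decomposition $G_1 = N\ltimes S'_{4g-4}$ of (\ref{semid}) and Theorem \ref{teomuy}, it suffices to check the claim separately for $s_e\in S'_{4g-4}$ and for $h_e\in N$, using $\sigma_e = h_e\cdot s_e$.

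First I would treat the $S'_{4g-4}$ part. Since $x_1,x_2,x_4\in Z_1$ (noted just before the proposition) and $S'_{4g-4}$ acts trivially on $Z_1$ by Proposition \ref{trivial}, all three relations are fixed by every $s\in S'_{4g-4}$. For $x_5$, which is not in $Z_1$, I would use the identity $x_5 = \tilde x_3 + \tilde x_4 + \tilde x_1 - \tilde x_2 - 2\tilde x_5 \pmod{\cdots}$ displayed after Proposition \ref{prim}; more directly, over $\mathbb{Z}_2$ one has $x_5 = x_1 + x_2 + x_4 + x_3$ where $x_3$ is the mod-$2$ reduction of $\tilde x_3$, so it is enough to show that $x_3$ (equivalently $x_4$, hence $x_5$) transforms correctly. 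Since $x_4\in Z_1$ is fixed and, examining the explicit formulas for $\tilde x_1,\dots,\tilde x_4$ in Proposition \ref{prim}, each is a sum over an $S_{4g-4}$-orbit of edges (the full rings of $l_i$'s, $u_i$'s, and $b_i$'s permuted by the symmetric group action coming from $p:\A\to\Sigma^{[4g-4]}$), these sums are symmetric and hence fixed up to reindexing; I would verify this by checking that the transposition $s_e$ attached to any edge $e$ permutes the edge-labels appearing in $x_1,x_2,x_4$ among themselves, which is a direct inspection of the graph $\check\Gamma$ and its labelling in Figure \ref{annulus}.

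Next I would treat the $N$ part. By (\ref{funh}), $h_e$ is the identity for $e\in E'$, so only $e\in E_0$ matters, and there $h_e(x) = x + \langle e,x\rangle\,\Delta_e$. Thus I must compute the pairings $\langle e, x_1\rangle$, $\langle e, x_2\rangle$, $\langle e, x_4\rangle$, $\langle e, x_5\rangle$ for each $e\in E_0$. Because $x_1,x_2,x_4\in Z_1$ are cycles and $\Delta_e$ is a face boundary (also a cycle), and by Poincaré–Lefschetz duality on the annulus the intersection pairing is computable combinatorially, I expect most of these pairings to vanish for degree/parity reasons; the only possibly nonzero contributions come from edges $e\in E_0$ adjacent to the support of $x_1,x_2,x_4$. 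When $\langle e, x_i\rangle \neq 0$, the correction term is $\Delta_e$, and I would then show $\Delta_e$ itself already lies in the span of $\{x_1,x_2,x_4,x_5\}$ modulo the relations — or, better, that the parity of adjacency forces $\langle e, x_i\rangle = 0$ outright, which I believe is the case since each $x_i$ is a full symmetric sum and hence meets every face an even number of times. This parity computation for $\langle e, x_i\rangle$ is the main obstacle: it requires careful bookkeeping of which edges $l_j, u_j, b_j$ bound which triangles and quadrilaterals in the ring structure of $\check\Gamma$, and one must handle the exceptional edges $u_5, l_6$ separately as elsewhere in the paper.

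Finally, assembling the two parts via $\sigma_e = h_e\cdot s_e$ and using that the span $\langle x_1,x_2,x_4,x_5\rangle$ is $S'_{4g-4}$-invariant (shown in the first step) and $N$-invariant (second step), I conclude that $\pi_1(\A)$ preserves this subspace, and therefore descends to a well-defined action on the quotient $P[2] = C_1/\langle x_1,x_2,x_4,x_5\rangle$. I would close by remarking that this is exactly what licenses the matrix computations of Theorem \ref{arribat} to pass to $P[2]$ and yield the monodromy group of the main theorem.
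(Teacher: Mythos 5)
There is a genuine gap here, together with one outright error. The entire content of the paper's proof is the adjacency-parity count that you explicitly defer: since $\sigma_{e}(x)=x+\langle x,e\rangle e$ and, over $\mathbb{Z}_{2}$, $\langle x_{i},e\rangle$ is the number of edges in the support of $x_{i}$ adjacent to $e$, one only has to observe that every edge of $\check{\Gamma}$ meets $0$ or $2$ edges of each of $x_{1},x_{2},x_{5}$ and $2$ or $4$ edges of $x_{4}$, so each $\sigma_{e}$ in fact \emph{fixes} all four relations (not merely preserves their span). You correctly identify this as the decisive computation but then write that you ``expect'' and ``believe'' the pairings vanish and call it ``the main obstacle''; a proof that leaves its only nontrivial step unverified is not a proof. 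Your fallback — that when $\langle e,x_{i}\rangle\neq 0$ the correction $\Delta_{e}$ might already lie in ${\rm span}\{x_{1},x_{2},x_{4},x_{5}\}$ — would not rescue the argument: the face boundaries $\Delta_{e}$ for $e\in E_{0}$ are linearly independent elements of $Z_{1}$ (they form part of the basis $\beta_{0}$) and are not contained in that four-dimensional span.

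Two further points. First, your treatment of $x_{5}$ rests on a false identity: reducing $\tilde{x}_{3}+\tilde{x}_{4}+\tilde{x}_{1}-\tilde{x}_{2}=2\tilde{x}_{5}$ mod $2$ gives $x_{3}=x_{1}+x_{2}+x_{4}$, not $x_{5}=x_{1}+x_{2}+x_{4}+x_{3}$; indeed $x_{5}$ cannot lie in the span of $x_{1},x_{2},x_{4}$ since those are cycles while $\partial x_{5}=(1,\dots,1)\neq 0$. This is precisely why $x_{3}$ is dropped from the list of relations and the half-lattice element $x_{5}$ appears as the fourth independent one, so the ``equivalently $x_{4}$, hence $x_{5}$'' chain does not hold. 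Second, the detour through $\sigma_{e}=h_{e}\cdot s_{e}$ is unnecessary and partly circular, since Theorem \ref{teomuy} is itself derived from the generator formula you would be verifying. On the positive side, your observation that $x_{1},x_{2},x_{4}\in Z_{1}$ combined with Proposition \ref{trivial} (which asserts that all of $G_{1}$, not only $S'_{4g-4}$, acts trivially on $Z_{1}$) disposes of three of the four relations immediately and is a legitimate shortcut; had you followed it with the direct parity count for $x_{5}$ alone — every edge is adjacent to an even number of edges among $l_{1},l_{3},u_{2},u_{4},b_{1},\dots,b_{2g+2}$ — you would have recovered the paper's argument.
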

\begin{proof}
 Any edge $e\in E$  is adjacent to either 2 or no edge in $x_{1}$  (or $x_{2},x_{5}$) thus labelling a trivial action on $x_{1}, x_{2}$ and $x_{5}$. Similarly, any edge $e\in E$ is adjacent to 2 or 4 edges in $x_{4}$ and thus is also trivial on $x_{4}$.
\end{proof}
 Note that $x_{1},x_{2},x_{4} \in Z_{1}$ and  $\partial  x_{5} = (1,1,\cdots,1)\in B_{0}$. Hence, we have the following maps
\begin{equation}\mathbb{Z}_{2}^{2g}\cong \frac{Z_{1}}{<x_{1},x_{2},x_{4}>}\rightarrow P[2]\rightarrow \frac{B_{0}}{<(1,1,\cdots,1)>}\cong \mathbb{Z}_{2}^{4g-6}~. \label{exact}\end{equation}
\begin{definition}The monodromy group $G$ is given by the action  on the quotient $P[2]$ induced by the action of $G_{1}$ on $C_{1}$. 
\end{definition}
Note that $x_{1},x_{2},x_{4}=0$ imply 
\begin{eqnarray}
 \Delta_{l_{6}}=\sum_{l_{i}\in E_{0}-\{l_{6}\}}\Delta_{l_{i}}\label{sum1}~;~{~}~
\Delta_{u_{5}}=\sum_{l_{i}\in E_{0}-\{u_{5}\}}\Delta_{u_{i}}~.\label{sum2}
\end{eqnarray}
Moreover, as $x_{1}+x_{2}+x_{4}+x_{5}=0$, one can express the edge $u_{6}$ in terms of  elements in $\beta'-\{u_{6}\}$.

\begin{definition}
  For $\tilde{\beta}_{0}:=\beta_{0}-\{x_{4},\Delta_{l_{6}},\Delta_{u_{5}} \}$ and $\tilde{\beta}':=\beta'-\{u_{6}\}$, let $\tilde{\beta}:=\{\tilde{\beta}_{0},\tilde{\beta}'\}$. 
\end{definition}

The elements in $\tilde{\beta}_{0}$ generate 
$Z_{1}/<x_{1},x_{2},x_{4}>$,
and $\tilde{\beta}'$ generates a complementary subspace in $P[2]$. 
Hence, we obtain our main theorem, which gives an explicit description of the monodromy action on $P[2]$:

\setcounter{theorem}{0}

\begin{theorem}\label{teo}
 The monodromy group $G\subset GL(6g-6,\mathbb{Z}_{2})$ on the first mod 2 cohomology of the fibres of the Hitchin fibration is given by the group of matrices   of the form
\begin{equation}
 [\sigma]=\left(\begin{array} {c|c}
I_{2g}&A\\\hline
0&\pi                 
                \end{array}
\right),
\end{equation}
where
\begin{itemize}
                   \item  $\pi$ is the quotient action on $\mathbb{Z}_{2}^{4g-5}/(1,\cdots,1)$ induced by the permutation representation on $\mathbb{Z}_{2}^{4g-5}$,
\item $A$  is any $(2g)\times (4g-6)$ matrix with entries in $\mathbb{Z}_{2}$.
                  \end{itemize}
\end{theorem}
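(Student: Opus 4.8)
The plan is to obtain Theorem~\ref{teo} by reading off the matrix of an arbitrary $\sigma\in G$ in the basis $\tilde\beta=\{\tilde\beta_0,\tilde\beta'\}$, using the structure already assembled. First I would check that $\tilde\beta$ really is a basis of $P[2]$: the sequence (\ref{exact}) gives $\dim P[2]=2g+(4g-6)=6g-6$, so it suffices to see that the $2g$ elements of $\tilde\beta_0$ map to a basis of $Z_1/\langle x_1,x_2,x_4\rangle$ and the $4g-6$ elements of $\tilde\beta'$ to a basis of a complement. The first holds because $\{x_4\}\cup\tilde E_0$ is a basis of $Z_1$ and $x_1,x_2,x_4$ are independent in $Z_1$ (recorded in (\ref{exact})); the second because $\partial$ carries $\mathrm{span}(\beta')$ isomorphically onto $B_0$, and modulo $\langle x_1,x_2,x_4\rangle$ together with the relation contributed by $x_5$ this descends to $B_0/\langle(1,\dots,1)\rangle$. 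Since $\pi_1(\A)$ preserves $x_1,x_2,x_4,x_5$ by the preceding proposition, the $G_1$--action on $C_1$ descends to $G$ on $P[2]$, and only the four blocks remain to be identified.

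Block--triangularity and the diagonal blocks follow at once from earlier results. Because $\partial\colon C_1\to C_0$ intertwines the $G_1$--action with the permutation action of $S_{4g-4}$ (the maps $h_e$ only add boundaries $\Delta_e\in\ker\partial$, and $S'_{4g-4}$ is equivariant), $G_1$ preserves $Z_1=\ker\partial$, hence $G$ preserves its image $\overline{Z_1}=Z_1/\langle x_1,x_2,x_4\rangle$ in $P[2]$, which kills the lower--left block. On $\overline{Z_1}$ the group $G_1$ acts trivially by Proposition~\ref{trivial}, so the top--left block is $I_{2g}$. On the quotient $P[2]/\overline{Z_1}$, which $\partial$ identifies with $B_0/\langle(1,\dots,1)\rangle$ (here $(1,\dots,1)\in B_0$ since $4g-4$ is even, cf.\ Proposition~\ref{propi2}, and $\partial x_5=(1,\dots,1)$), the induced action is the permutation action of $S_{4g-4}$ on $C_0=\mathbb{Z}_2^{4g-4}$ restricted to $\ker f\cong\mathbb{Z}_2^{4g-5}$ and pushed to the quotient by $(1,\dots,1)$; transported along $\partial$ into the basis $\tilde\beta'$ this is exactly $\pi$. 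Finally, Copeland's generators map to transpositions and generate the monodromy (Theorem~\ref{copeland}), so $\pi_1(\A)\to S_{4g-4}$ is onto and $\pi$ ranges over the full permutation quotient (faithfully for $g\ge 3$).

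It remains to show the top--right block $A$ realizes every $(2g)\times(4g-6)$ matrix. The matrices of the asserted form do form a subgroup of $GL(6g-6,\mathbb{Z}_2)$ --- composition multiplies the permutation blocks and, these being invertible, lets the $A$--block vary freely --- so it is enough to prove that $G$ contains all $\left(\begin{smallmatrix}I_{2g}&A\\0&I\end{smallmatrix}\right)$ and, for each admissible $\pi$, at least one $\left(\begin{smallmatrix}I_{2g}&\ast\\0&\pi\end{smallmatrix}\right)$; the latter is immediate by lifting a permutation through the generators $\sigma_e$. For the former I would start from Proposition~\ref{muy}: in the basis $\beta$, $N$ is exactly the set of $\left(\begin{smallmatrix}I_{2g+3}&A'\\0&I\end{smallmatrix}\right)$ whose $x_4$--row is zero, whose $\Delta_e$--rows for $e\in E_0\setminus\{u_5,l_6\}$ have even weight, and whose $\Delta_{u_5},\Delta_{l_6}$--rows are arbitrary. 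Passing to $\tilde\beta$ amounts to deleting the identically zero $x_4$--row and the redundant $u_6$--column and substituting $\Delta_{l_6}=\sum_{l_i\in E_0\setminus\{l_6\}}\Delta_{l_i}$, $\Delta_{u_5}=\sum_{u_i\in E_0\setminus\{u_5\}}\Delta_{u_i}$ from (\ref{sum1})--(\ref{sum2}); this replaces each surviving $\Delta_{l_i}$--row by itself plus the unconstrained $\Delta_{l_6}$--row, and likewise for the $u$'s, so every even--weight constraint disappears and the induced $(2g)\times(4g-6)$ block is arbitrary. Together with the inclusion in the other direction this gives equality, proving the theorem.

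The main obstacle is precisely this last step: the explicit bookkeeping of the change of basis $\beta\rightsquigarrow\tilde\beta$. One must confirm that the $2g$ rows carrying the parity constraint in Theorem~\ref{arribat} are exactly those into which $\Delta_{l_6}$ or $\Delta_{u_5}$ gets absorbed (equivalently, that $E_0\setminus\{u_5,l_6\}$ contains no chord of a type absent from (\ref{sum1})--(\ref{sum2})), and that rewriting the $u_6$--column via $x_1+x_2+x_4+x_5=0$ introduces no new constraint; this requires the precise incidence pattern of the edges $l_i,u_i,b_i$ in Figure~\ref{annulus}. Everything else is a formal consequence of Propositions~\ref{trivial} and~\ref{muy}, Theorem~\ref{arribat}, and the surjectivity of $\pi_1(\A)\to S_{4g-4}$.
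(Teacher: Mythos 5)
Your proposal is correct and follows essentially the same route as the paper: the block-triangular shape and the $I_{2g}$ and $\pi$ blocks are read off from Proposition \ref{trivial}, Theorem \ref{arribat} and the surjection $\pi_1(\A)\to S_{4g-4}$, and the arbitrariness of $A$ is obtained by substituting the relations (\ref{sum1}) for $\Delta_{u_5}$, $\Delta_{l_6}$ (and the $x_5$-relation for $u_6$) into the description of $N$ in Proposition \ref{muy}, exactly as in the paper's two-sentence argument. You are in fact more explicit than the paper, and you correctly identify the one point that genuinely needs checking --- that the change of basis $\beta\rightsquigarrow\tilde\beta$ kills every parity constraint --- which the paper leaves as ``similar arguments to the ones in Proposition \ref{muy}''.
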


\begin{proof} From the above analysis,  the action of $G$ on $B_{0}/<\partial x_{5}=(1,1,\cdots,1)>$ is given by the quotient action of the symmetric group. Furthermore, replacing $\Delta_{u_{5}}$ and $\Delta_{l_{6}}$ by the sums in (\ref{sum1}), one can use similar arguments to the ones in Proposition \ref{muy}  to obtain any number of 1's in all the rows of the matrix $A$.
\end{proof}
\setcounter{theorem}{18}

%\begin{rem}   For an algebraic context, the reader should refer to the work of  Gow and  Kleshchev \cite{kles}. 
% via the permutation representation $\pi$ which, by means of the intersection pairing, is an irreducible  representation of the symplectic group $Sp(4g-6, \mathbb{Z}_{2})$. 
%\end{rem}

\begin{remark}We have seen in Proposition \ref{trivial} that the action of $G_{1}$ is trivial on $Z_{1}$. Moreover, the space $\mathbb{Z}_{2}[x_{1},x_{2},x_{4},x_{5}]$ is preserved by the action of $G_{1}$, and thus one can see that the induced monodromy action  on the $2g$-dimensional subspace  $Z_{1}/\mathbb{Z}_{2}[x_{1},x_{2},x_{4},x_{5}]$ is trivial. Geometrically this space is defined as follows. There are $2^{2g}$ sections of the Hitchin fibration given by the choice of the square root of $K$, which identifies the fibre with the Prym variety.  These sections meet each Prym in $2^{2g}$ points which also lie in $P[2]$. Since we can lift a closed curve by  a section, these are acted on trivially by the monodromy.
\end{remark}

\section{The orbits of the monodromy group}\label{sec:orbit}

We shall study now the orbits $G_{(s,x)}$  of each $(s,x)\in P[2]\cong \mathbb{Z}_{2}^{2g}\oplus\mathbb{Z}_{2}^{4g-6} $ under the action of $G$. Note that for $g\in G$, its action  on $(s,x)$ is given by

\begin{equation}
 g\cdot (s,x)^{t}=\left(\begin{array}
  {c|c}I&A\\ \hline 0&\pi
 \end{array}\right)\left(\begin{array}{c}s\\x
\end{array}\right)=\left(\begin{array}{c}s+Ax\\\pi x
\end{array}\right) ~.\label{arriba}
\end{equation}

\begin{proposition}\label{numero}
 The action of $G$ on $P[2]$ has $2^{2g}+g-1$ different orbits.
\end{proposition}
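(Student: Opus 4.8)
The plan is to count the orbits of $G$ on $P[2] \cong \mathbb{Z}_2^{2g} \oplus \mathbb{Z}_2^{4g-6}$ using the explicit description of the action from equation~(\ref{arriba}). First I would observe that the group $G$ fits into an extension with kernel the ``lower-triangular'' unipotent part $\{(\,^{\,I}_{\,0}\,{}^A_I\,)\}$ acting by $(s,x)\mapsto(s+Ax,x)$ and quotient the permutation group $S_{4g-4}$ acting on $x \in \mathbb{Z}_2^{4g-5}/(1,\dots,1) \cong \mathbb{Z}_2^{4g-6}$. So the strategy is: first partition the $x$-coordinates into $S_{4g-4}$-orbits, and then, for each such orbit, count how many $G$-orbits lie above it by analyzing what the unipotent part can do to the $s$-coordinate once $x$ is fixed.

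The key case split is on the $x$-component. When $x = 0$ (the unique fixed point of the permutation action on $\mathbb{Z}_2^{4g-6}$, which lifts to $(1,\dots,1)$ and $(0,\dots,0)$ in $\mathbb{Z}_2^{4g-5}$), the matrix $A$ acts trivially, so the entire $\mathbb{Z}_2^{2g}$ worth of $s$-values are all fixed: this contributes $2^{2g}$ orbits. For $x \neq 0$, I would use the stabilizer description: the $S_{4g-4}$-orbit of a nonzero $x$ is determined by the number $k$ (with $1 \le k \le 2g-2$, up to the complementation $k \leftrightarrow 4g-4-k$ coming from the quotient by $(1,\dots,1)$) of coordinates equal to $1$ in a representative lift, so there are $2g-2$ such orbits. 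Within a fixed such orbit, the claim to establish is that the unipotent part acts transitively on the $s$-fibre $\mathbb{Z}_2^{2g}$, i.e. that as $A$ ranges over all admissible matrices and we also conjugate by permutations fixing $x$, the vectors $Ax$ sweep out all of $\mathbb{Z}_2^{2g}$. Since by Theorem~\ref{teo} the matrix $A$ is an \emph{arbitrary} $(2g)\times(4g-6)$ matrix over $\mathbb{Z}_2$, and $x \neq 0$, the map $A \mapsto Ax$ is already surjective onto $\mathbb{Z}_2^{2g}$. Hence each of the $2g-2$ nonzero $x$-orbits contributes exactly one $G$-orbit.

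Adding up: $2^{2g}$ orbits over $x=0$, plus $2g-2$ orbits over the nonzero $x$-orbits, gives $2^{2g} + 2g - 2$. This does not match the claimed $2^{2g}+g-1$, so the plan must account for the extra identifications I have glossed over. The subtlety --- and the step I expect to be the main obstacle --- is that the counting of $S_{4g-4}$-orbits on $\mathbb{Z}_2^{4g-5}/(1,\dots,1)$ must be done carefully: an element of $\mathbb{Z}_2^{4g-5}$ with $k$ ones and its complement with $4g-5-k$ ones represent the same class, so the orbits are indexed by $k \in \{0,1,\dots,2g-2\}$ with $k=0$ and $k=4g-4$ identified and otherwise $k$ identified with $4g-4-k$; this gives $2g-1$ values of the pair-class for $x$ including $x=0$. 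Moreover I need to recheck whether $x=0$ really kills $A$: in $P[2]$ the lift of $x=0$ is the class of $(1,\dots,1)$, on which $A$ need \emph{not} act trivially, so some of the $2^{2g}$ values of $s$ over $x=0$ may get folded together, or conversely the $x\neq 0$ fibres may fail to be full $\mathbb{Z}_2^{2g}$'s because the relevant $x$ representatives interact with the columns removed in passing to $\tilde\beta$. The right bookkeeping should reconcile $2^{2g}$ (from genuinely fixed points where no nontrivial $A$ moves $s$) with $g-1$ (from the nonzero $x$-pair-classes, each contributing a single orbit, but with the ``$2g-2$'' collapsing to ``$g-1$'' precisely because of the $k \leftrightarrow 4g-4-k$ complementation identifying the $x$-orbit with index $k$ and index $4g-4-k$). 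I would therefore carry out the proof by: (1) enumerating $S_{4g-4}$-orbits on the quotient $\mathbb{Z}_2^{4g-5}/(1,\dots,1)$ explicitly as the $g$ pair-classes $\{k,4g-4-k\}$ for $k=0,1,\dots,2g-2$; (2) showing that over the trivial class exactly $2^{2g}$ orbits sit (using that genuinely $A$ acts trivially there after the reduction); (3) showing each of the remaining $g-1$ classes contributes exactly one orbit by surjectivity of $A \mapsto Ax$ for the corresponding nonzero $x$; and (4) summing. The main obstacle is step~(2)--(3): verifying the precise interaction between the complementation relation $(1,\dots,1)$ and the structure of admissible $A$-matrices so that the fibre counts come out as $2^{2g}$ and $1$ respectively rather than something coarser or finer.
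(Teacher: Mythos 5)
Your overall strategy coincides with the paper's: split on whether the $x$-component is zero, obtain $2^{2g}$ orbits from the trivial action on the set $\{(s,0)\}$, and show that each nonzero $S_{4g-4}$-orbit of $x$-classes carries a single $G$-orbit because $A\mapsto Ax$ is onto $\mathbb{Z}_2^{2g}$ for $x\neq 0$ when $A$ is arbitrary. However, there is a genuine gap at the one step that actually produces the summand $g-1$: you never correctly count the $S_{4g-4}$-orbits on the nonzero classes. Your first computation yields $2g-2$ nonzero classes; your ``corrected'' plan then asserts ``$g$ pair-classes $\{k,4g-4-k\}$ for $k=0,1,\dots,2g-2$'', which as written is $2g-1$ classes, and you explicitly defer the reconciliation as ``the main obstacle'' rather than resolving it. The missing ingredient is Proposition \ref{propi2} together with (\ref{exact}): the $x$-coordinates are not arbitrary elements of $\mathbb{Z}_2^{4g-5}$ but classes of vectors in $B_0\subset\mathbb{Z}_2^{4g-4}$, i.e.\ vectors with an \emph{even} number of ones, taken modulo $(1,\dots,1)$. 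A nonzero class therefore has a representative of weight $2m$ with $1\le m\le 2g-3$, complementation by $(1,\dots,1)$ identifies $m$ with $2g-2-m$, and this involution on $\{1,\dots,2g-3\}$ has the single fixed point $m=g-1$ and $g-2$ free pairs, giving exactly $g-1$ nonzero classes. Without the evenness constraint the count cannot come out to $g-1$, so this is not mere bookkeeping but the substantive content of the proof, and your proposal effectively reverse-engineers the answer instead of deriving it.

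Your secondary worry --- that over $x=0$ the matrix $A$ might not act trivially because the zero class also lifts to $(1,\dots,1)$ --- dissolves once you note that Theorem \ref{teo} presents $A$ as a $(2g)\times(4g-6)$ matrix in the basis $\tilde{\beta}$ of the quotient $P[2]$ itself; there $x=0$ literally means the zero vector of $\mathbb{Z}_2^{4g-6}$, so $Ax=0$ and the fibre over the trivial class is fixed pointwise, contributing $2^{2g}$ orbits exactly as in (\ref{arriba}). No folding or refinement of fibres occurs, and the two contributions $2^{2g}$ and $g-1$ simply add.
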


\begin{proof} The matrices $A$ have any possible number of $1$'s in each row,  and so for $x \neq 0$  any $s'\in \mathbb{Z}_{2}^{2g}$ may be written as  $s'=s+Ax$ for some $A$. Hence, the number of orbits $G_{(s,x)}$ for $x\neq0$ is determined by the number of orbits of the action in $\mathbb{Z}_{2}^{4g-6}$ defined by 
$\xi:x\rightarrow \pi x$.  
Since the map $\xi$ permutes the non-zero entries of $x$,   the orbits of this action are given by elements with the same number of $1$'s.

From equation (\ref{exact}) the space $\mathbb{Z}_{2}^{4g-6}$ can be thought of as  vectors in $ \mathbb{Z}_{2}^{4g-4}$  with an even number of $1$'s modulo $(1,\dots,1)$. 
  Thus, for $x\in \mathbb{Z}_{2}^{4g-6}$ and $x\neq 0$, each orbit $\xi_{x}$ is defined by a constant $m$ such that $x$ has $2m$ non-zero entries, for $0<2m\leq4g-4$. We shall denote by $\bar{x}\in \mathbb{Z}^{4g-6}$  the  element  defined by the constant $\bar{m}$ such that $2\bar{m}=(4g-4)-2m$. With this notation, we  can see that $\bar{x}$ and $x$ belong to the same equivalence class in $\mathbb{Z}_{2}^{4g-6}$.  Note that for $m\neq g-1,2g-2$, the corresponding $x$ is equivalent to $\bar{x}$ under $\xi$ and thus in this case there are $g-2$ equivalent classes $\xi_{x}$. Then, considering  the equivalence class for $m=g-1$, one has $g-1$ different classes for the action of $\xi$.
From equation (\ref{arriba}), the action of $G$ on an element $(s,0)\in P[2]$ is trivial, and so one has $2^{2g}$ different orbits of $G$.
\end{proof}

\section{Connected components of $\mathcal{M}_{SL(2,\mathbb{R})}$} \label{sec:component}
As seen in Section \ref{sl}, the fixed point set in $\mathcal{M}$ of the involution $\Theta$ gives the subspace  of semistable $SL(2,\mathbb{R})$-Higgs bundles.  As an application of our results, we shall study the connected components corresponding to stable and strictly semistable $SL(2,\mathbb{R})$-Higgs bundles. In particular, we shall check  that no connected component of the fixed point set of $\Theta$ lies entirely over the discriminant locus of $\mathcal{A}$. 

Recall that the regular fibres of the  Hitchin fibration $h: \M\rightarrow \A$ are isomorphic to ${\rm Prym}(S,\Sigma)$, and that $SL(2,\mathbb{R})$-Higgs bundles correspond to the points of order 2 in these fibres, which we have denoted by $P[2]$. Hence, there is a finite covering of $\A$ of degree $2^{6g-6}$ whose fibres are isomorphic to  $P[2]$. 
In this section we  first  calculate the number of connected components of $\mathcal{M}_{SL(2,\mathbb{R})}$ via the inclusion $\mathcal{M}_{SL(2,\mathbb{R})}\subset \mathcal{M}$, and then add the number of connected components  which could not be seen through the monodromy analysis.

If $p$ and $q$ are in the same orbit of the monodromy group, they are connected by the lift of a closed path in $\A$. Conversely if $p$ and $q$ are connected in $\mathcal{M}_{SL(2,\mathbb{R})}$, the monodromy around the projection of the path takes one to the other.  Hence, if $p$ and $q$ are in the same orbit, then they are in the same connected component  of the fixed point submanifold of  $\Theta:(V,\Phi) \mapsto (V,-\Phi)$. 

In the case of strictly semistable Higgs bundles, the following Proposition applies:

\begin{proposition}\label{part1}
 Any point representing a strictly semistable Higgs bundle in $\mathcal{M}$  fixed by $\Theta$ is in the connected component of a Higgs bundle with zero Higgs field.
\end{proposition}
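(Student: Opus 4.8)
The plan is to reduce to the polystable representative of the $S$-equivalence class and then to scale the Higgs field down to zero along a path that remains inside the fixed-point locus of $\Theta$.

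First I would use that a point of $\mathcal{M}$ representing a strictly semistable Higgs bundle is represented by its polystable semisimplification, and that in rank two with $\det E=\mathcal{O}$ and traceless Higgs field this graded object necessarily has the form $(L,\phi)\oplus(L^{-1},-\phi)$, where $L$ is a line bundle of degree $0$ and $\phi\in H^{0}(\Sigma,K)$; here I write $\phi$ both for the Higgs field $L\to L\otimes K$ and for the holomorphic differential it defines, the two agreeing canonically for a line bundle. Since this polystable object represents a point fixed by $\Theta$, it must be genuinely isomorphic to $(L,-\phi)\oplus(L^{-1},\phi)$. Writing such an isomorphism in block form, with diagonal entries $a,d\in\mathrm{Hom}(L,L)\cong\mathbb{C}$ and off-diagonal entries $b\in H^{0}(L^{2})$, $c\in H^{0}(L^{-2})$, and imposing that it intertwine the two Higgs fields, one gets $2a\phi=2d\phi=0$, so that $a=d=0$ as soon as $\phi\neq 0$; then $b$ and $c$ must be nowhere-vanishing sections, which forces $L^{2}\cong\mathcal{O}$. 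The conclusion of this step is the dichotomy: either $\phi=0$, or $L^{2}\cong\mathcal{O}$.

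In the first case the Higgs bundle already has zero Higgs field and there is nothing to prove. In the second case I would consider the algebraic family
\[
(E_{t},\Phi_{t}):=(L,t\phi)\oplus(L^{-1},-t\phi),\qquad t\in[0,1],
\]
each member of which is a direct sum of two rank-one Higgs bundles of degree $0$, hence a polystable Higgs bundle of slope $0$, so that $t\mapsto[(E_{t},\Phi_{t})]$ is a continuous path in $\mathcal{M}$. Interchanging the two summands by a fixed isomorphism $L\cong L^{-1}$ identifies $\Theta(E_{t},\Phi_{t})=(L,-t\phi)\oplus(L^{-1},t\phi)$ with $(E_{t},\Phi_{t})$ for every $t$, so the whole path lies in the fixed-point locus of $\Theta$; at $t=0$ it reaches $(L\oplus L^{-1},0)$, which has zero Higgs field. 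This exhibits the given point in the same connected component of the fixed-point locus as a Higgs bundle with zero Higgs field.

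The step I expect to be the crux is establishing the dichotomy ``$\phi=0$ or $L^{2}\cong\mathcal{O}$'': the naive rescaling $t\mapsto[(E,t\Phi)]$ of the original bundle need not remain $\Theta$-fixed, because $\Theta$ sends the graded object to the one with $\phi$ replaced by $-\phi$, and a generic degree-zero line bundle $L$ is not isomorphic to $L^{-1}$; so it is precisely the constraint imposed by $\Theta$-invariance that has to be extracted before the rescaling argument can be carried out.
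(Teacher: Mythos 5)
Your proposal is correct and follows essentially the same route as the paper: reduce to the polystable diagonal representative $(V\oplus V^{*},\mathrm{diag}(a,-a))$, observe that $\Theta$-invariance forces either $a=0$ or $V^{2}\cong\mathcal{O}$ (in which case the swap automorphism conjugates $\Phi$ to $-\Phi$), and then scale the differential to zero through strictly semistable pairs. Your block-matrix derivation of the dichotomy is just a more explicit version of the paper's remark that for $V^{2}$ nontrivial only diagonal automorphisms exist.
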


\begin{proof} The moduli space $\mathcal{M}$ is the space of $S$-equivalence classes of semistable Higgs bundles. A strictly semistable $SL(2,\mathbb{C})$-Higgs bundle $(E,\Phi)$ is represented by  $E=V\oplus V^{*}$ for a degree zero line bundle $V$, and
\[\Phi=\left(\begin{array}
              {cc}a&0\\0&-a
             \end{array}\right)~{~\rm for~}~a\in H^{0}(\Sigma,  K).\]
 
If $V^2$ is nontrivial, only the automorphisms fix $\Phi$ so this is   an $SL(2,\mathbb{R})$-Higgs bundle only for $\Phi= 0$, and corresponds to a flat connection with  holonomy 
in $SO(2)\subset SL(2,R)$. If $V^2$ is trivial then the automorphism $(u,v)\mapsto (v,-u)$ takes $\Phi$ to $-\Phi$, corresponding to a flat bundle with holonomy in 
$\mathbb{R}^*\subset SL(2,\mathbb{R})$.     By scaling $\Phi$ to zero this is connected to the zero  Higgs field.
The differential $a$ can be continuously deformed to zero by considering $ta$ for $0\leq t\leq 1$.  Hence, by stability of line bundles,   one can continuously deform  $(E,\Phi)$ to a Higgs bundle with zero Higgs field via strictly semistable pairs.  
\end{proof}

In the case of stable $SL(2,\mathbb{R})$-Higgs bundles we have the following result:

\begin{proposition}\label{prop2}  Any stable $SL(2,\mathbb{R})$-Higgs bundle is in a connected component which intersects $\M$.
\end{proposition}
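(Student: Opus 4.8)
The plan is to show that any stable $SL(2,\mathbb{R})$-Higgs bundle $(E=V\oplus V^{*},\Phi)$ can be deformed, through stable $SL(2,\mathbb{R})$-Higgs bundles, to one whose quadratic differential $\det(\Phi)$ is regular, i.e. has only simple zeros; such a Higgs bundle then lies in a regular fibre of the Hitchin map $h:\M\to\A$, hence in a connected component meeting $\M$. First I would recall from Section \ref{sl} that a stable $SL(2,\mathbb{R})$-Higgs bundle has Higgs field $\Phi=\begin{pmatrix}0&\beta\\\gamma&0\end{pmatrix}$ with $\beta:V^{*}\to V\otimes K$ and $\gamma:V\to V^{*}\otimes K$, and $\det(\Phi)=-\beta\gamma\in H^{0}(\Sigma,K^{2})$. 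The key observation is that the stability condition for such a pair only requires, for $V$ of positive degree, that $\gamma\neq 0$ (and analogously), so one has genuine freedom to move $\beta$ within $H^{0}(\Sigma,\mathrm{Hom}(V^{*},V\otimes K))$ while preserving stability.

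The main step is a transversality/genericity argument: I would fix the underlying holomorphic data (in particular $V$ and $\gamma$) and vary $\beta$, showing that for a generic choice of $\beta$ in its (nonempty, since the original $\beta$ works) space of sections the product $\beta\gamma$ has only simple zeros. When $\deg V>0$ this is clean because $\gamma$ is forced to be nonzero but $\beta$ may even be deformed freely, including scaling to make $\beta\gamma$ as generic as possible; the zeros of $\beta\gamma$ away from the zeros of $\gamma$ can be made simple by a small perturbation of $\beta$, and near each zero of $\gamma$ one perturbs so that $\gamma$ itself has simple zeros and $\beta$ is nonvanishing there. One must also treat the boundary cases: if $\beta\gamma\equiv 0$ the original Higgs bundle is strictly semistable, not stable, so this does not occur; and the case $\deg V=0$ needs separate handling (where both $\beta,\gamma$ may be constrained), but there stability still forces $\Phi\neq 0$, so at least one of $\beta,\gamma$ is a nonzero section of a degree-$2g-2$ line bundle and again a small deformation of the other factor achieves simple zeros. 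Throughout one checks that the deformation path stays in the stable locus and stays fixed by $\Theta$, i.e. keeps the form $E=V\oplus V^{*}$, $\Phi$ off-diagonal — this is automatic since we only deform $\beta,\gamma$.

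I expect the main obstacle to be the case analysis on $\deg V$ together with verifying that the perturbation of $\beta$ (or $\gamma$) can be performed inside the linear system while simultaneously (i) keeping stability, (ii) keeping the $SL(2,\mathbb{R})$ structure, and (iii) achieving simple zeros of the resulting quadratic differential; concretely the subtlety is that $\beta$ and $\gamma$ live in fixed linear systems determined by $\deg V$, and one needs these systems to be large enough that a generic member of $\{\beta\gamma\}$ lies in $\A$. For $g\geq 2$ and the relevant range of $\deg V$ this follows from Riemann--Roch bounds on $h^{0}(V^{2}K)$, but spelling this out for all components (parametrized by the Euler number, equivalently by $\deg V$ with $0\le\deg V\le 2g-2$ up to duality) is where the real work lies. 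Once a point of $\M$ is reached in the same connected component, Proposition \ref{ppp} identifies it with an element of $P[2]$ in a regular fibre, and the claim follows.
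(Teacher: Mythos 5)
Your overall strategy (deform the pair until $\det\Phi$ has simple zeros) is the right one, but the central step as you describe it has a genuine gap: you propose to fix $V$ and perturb $\beta$ and $\gamma$ \emph{within their linear systems} $|V^{2}K|$ and $|V^{-2}K|$. This fails in general because the linear system containing $\gamma$ is determined by the actual line bundle $V$, not just by $\deg V$, and it can be zero-dimensional with its unique divisor non-reduced. For example, on a non-hyperelliptic $\Sigma$ take $V$ with $V^{2}=K(-2p)$ for a point $p$; then $h^{0}(V^{-2}K)=h^{0}(\mathcal{O}(2p))=1$ and every nonzero $\gamma$ vanishes to order $2$ at $p$, so $\det\Phi=-\beta\gamma$ has a multiple zero at $p$ for \emph{every} choice of $\beta$. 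No perturbation inside the fixed linear systems can rescue this, and your appeal to Riemann--Roch bounds on $h^{0}(V^{2}K)$ addresses only the $\beta$-factor, not this obstruction coming from $\gamma$. In other words, with $V$ frozen the family $\{(V\oplus V^{*},\Phi)\}$ can lie entirely over the discriminant locus, which is exactly the scenario the proposition must rule out.

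The paper's proof supplies the missing idea: instead of fixing $V$, it first scales $\beta$ to zero (harmless for stability since $\gamma\neq0$) and then deforms the \emph{divisor} $[\gamma]$ inside the symmetric product $S^{2g-2-2d}\Sigma$, which is connected. Moving the divisor moves the line bundle, replacing $V$ by some $U$ of the same degree for which $[\tilde\gamma]$ consists of $n=2g-2-2d$ distinct points chosen among the zeros $x_{1},\dots,x_{4g-4}$ of a fixed $a\in\A$; the remaining zeros then form a divisor in $|U^{2}K|$, and one turns $\tilde\beta$ back on with exactly that divisor. This lands the pair in the fibre over $a\in\A$. So the essential difference is that the connectedness of the symmetric product lets the holomorphic type of $V$ vary along the path --- your deformation space is too small precisely because it excludes this. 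If you incorporate that step (and note that throughout, stability is preserved because $\gamma$ never vanishes identically and the pair stays off-diagonal, hence fixed by $\Theta$), your argument closes.
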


\begin{proof} Let $(E=V\oplus V^{*},\Phi)$ be a stable $SL(2,\mathbb{R})$-Higgs bundle with  
\begin{equation}
 \Phi=\left(\begin{array}
              {cc}0&\beta\\\gamma&0
             \end{array}\right)\in H^{0}(\Sigma,{\rm End}_{0}E\otimes K),\nonumber
\end{equation}
and let $d:={\rm deg}(V)\geq 0$. Stability implies that   the section $\gamma\in H^{0}(\Sigma, V^{-2}K)$ is non-zero, and thus   $0\leq 2d \leq 2g-2$. Moreover, the section $\beta$ of $V^{2}K$ can be deformed continuously to zero without affecting  stability.

 The section $\gamma$ defines a point $[\gamma]$ in the symmetric product $S^{2g-2-2d}\Sigma$. As this space is connected, one can continuously deform the divisor $[\gamma]$ to any $[\tilde{\gamma}]$ composed of distinct points. For   $a\in \A$ with zeros $x_{1},\ldots,x_{4g-4} $, we may deform $[\gamma]$ to the divisor  $[\tilde{\gamma}]$ given by the points $x_{1},\ldots,x_{n}\in \Sigma$ for  $n:= 2g-2-2d$, and such that $\tilde{\gamma}$ is a section of $U^{-2}K$ for some line bundle $U$.

 The complementary zeros  $x_{n+1},\ldots, x_{g-4}$ of $a$ form a divisor of $U^{2}K$. Any section $\tilde{\beta}$ with this divisor can be reached by continuously deforming $\beta$ from zero to $\tilde{\beta}$ through sections without affecting  stability. Hence,  we may continuously deform any stable pair  $(V\oplus V^{*},\Phi=\{\beta,\gamma\})$ to a Higgs bundle $(U\oplus U^{*}, \tilde{\Phi}=\{\tilde{\beta}, \tilde{\gamma}\})$ in the regular fibres over $\A$.
\end{proof}

The above analysis establishes  that   the number of connected components of the fixed point set of the involution $\Theta$ on $\mathcal{M}$ is less than or equal to the number of orbits of the monodromy group in $P[2]$.
 A flat $SL(2,\mathbb{R})$-Higgs bundle has an associated $\mathbb{R}\mathbb{P}^{1}$ bundle whose Euler class $k$ is a topological invariant which satisfies the Milnor-Wood  inequality $-(g-1)\leq k\leq g-1$.  In particular,  $SL(2,\mathbb{R})$-Higgs bundles with different Euler class lie in different connected components $\mathcal{M}^{k}_{SL(2,\mathbb{R})}$ of the fixed  point set of $\Theta$. Moreover, for $k=g-1, -(g-1)$ one has $2^{2g}$ connected components corresponding to the so-called Hitchin components. Hence, the lower bound  to the number of connected components of the fixed point set of the involution $\Theta$ for $k\geq 0$ is $2^{2g}+g-1$. As this lower bound equals the number of orbits of the monodromy group on the fixed points of $\Theta$ on $\M$, one has that the closures of these orbits can not intersect.   

Since we consider $k\geq 0$, the number of connected components of the fixed points of the involution $\Theta:~(E,\Phi)\mapsto (E,-\Phi)$ on  $\mathcal{M}_{reg}$ is equal to the number of orbits of the monodromy group on the points of order two of the regular fibres $\M$, i.e. $2^{2g}+g-1$. From the above analysis, one has the following application of our main theorem:

\begin{corollary} The number of connected components of the moduli space of semistable $SL(2,\mathbb{R})$-Higgs bundles  as $SL(2,\mathbb{C})$-Higgs bundles is $2^{2g}+g-1$. 
\end{corollary}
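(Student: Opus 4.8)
The plan is to pin down the count $2^{2g}+g-1$ by trapping the number of connected components of the fixed-point set of $\Theta$ in $\mathcal{M}_{reg}$ between a monodromy upper bound and an Euler-class lower bound that happen to coincide; the qualifier ``as $SL(2,\mathbb{C})$-Higgs bundles'' is precisely what collapses the full $SL(2,\mathbb{R})$ tally to the range $k\ge 0$.

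For the upper bound I would argue as follows. By Proposition \ref{ppp} the $SL(2,\mathbb{R})$-Higgs bundles sitting in the regular fibres are exactly the order-two points $P[2]$, and two such points lying in one orbit of the monodromy group $G$ are joined by the lift of a loop in $\A$, hence lie in a single connected component of the fixed-point set. Thus the number of components meeting $\M$ is at most the number of $G$-orbits on $P[2]$, which Proposition \ref{numero} evaluates as $2^{2g}+g-1$. To rule out components hidden over the discriminant I would invoke Propositions \ref{part1} and \ref{prop2}: every strictly semistable fixed point deforms, through strictly semistable pairs, to a Higgs bundle with zero Higgs field, and every stable $SL(2,\mathbb{R})$-Higgs bundle deforms into the regular locus. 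Consequently every component of the fixed-point set already meets $\M$, and the bound $2^{2g}+g-1$ applies to the whole fixed-point set.

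For the matching lower bound I would use the Euler class $k$ of the associated flat $\mathbb{RP}^{1}$-bundle, a locally constant invariant satisfying the Milnor--Wood inequality $|k|\le g-1$; since $k$ separates components and the extremal value $k=g-1$ alone carries the $2^{2g}$ Hitchin (Teichm\"uller) components, the values $k=0,1,\dots,g-1$ already force at least $2^{2g}+(g-1)$ components. Here I would stress the meaning of the phrase in the statement: conjugation by $\mathrm{diag}(i,-i)\in SL(2,\mathbb{C})$ is an outer automorphism of $SL(2,\mathbb{R})$ reversing the Euler class, and since inner conjugations act trivially on the $SL(2,\mathbb{C})$ moduli space, a representation of Euler class $k$ and one of class $-k$ are literally the \emph{same} point of $\mathcal{M}$. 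This is exactly why only $k\ge 0$ is counted when the bundles are regarded as $SL(2,\mathbb{C})$-Higgs bundles, so the lower bound reads $2^{2g}+g-1$.

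Since the two bounds agree, the number of connected components equals $2^{2g}+g-1$, and in particular the closures of the monodromy orbits cannot merge. I expect the delicate step to be the treatment of the boundary strata in the upper-bound argument: one must check that the deformations supplied by Propositions \ref{part1} and \ref{prop2} stay inside the fixed-point locus of $\Theta$ throughout and neither create nor destroy components, so that the orbit count over the regular locus genuinely controls the component count of the entire fixed-point set and the squeeze between the two bounds is legitimate.
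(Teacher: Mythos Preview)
Your proposal is correct and follows essentially the same squeeze argument as the paper: Propositions~\ref{part1} and~\ref{prop2} push every component into $\M$, Proposition~\ref{numero} bounds the components above by the $2^{2g}+g-1$ monodromy orbits, and the Euler class together with the $2^{2g}$ Hitchin components at $k=g-1$ gives the matching lower bound. Your explicit justification via $\mathrm{diag}(i,-i)$ for restricting to $k\ge 0$ makes precise a point the paper leaves implicit; the one step both you and the paper elide is that Proposition~\ref{part1} lands at zero Higgs field rather than in $\M$, but this is harmless since $(V\oplus V^{*},0)$ deforms through stable $SL(2,\mathbb{R})$ pairs (turn on $\gamma$) and then Proposition~\ref{prop2} applies.
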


\setcounter{theorem}{1}
As mentioned previously,  the Euler classes $-k$ for $0<k< g-1$  should also be considered, since this invariant labels connected components  $\mathcal{M}^{-k}_{SL(2,\mathbb{R})}$
which are mapped into $\mathcal{M}^{k}_{SL(2,\mathbb{R})}\subset \mathcal{M}.$
Hence, we have a decomposition 
\begin{eqnarray}\mathcal{M}_{ SL(2,\mathbb{R})}= \mathcal{M}^{k=0}_{ SL(2,\mathbb{R})} \sqcup \left(\bigsqcup_{i=1}^{2^{2g}}  \mathcal{M}^{k=\pm (g-1),i}_{SL(2,\mathbb{R})}    \right) \sqcup  \left( \bigsqcup_{k=1}^{g-2} \mathcal{M}^{\pm k}_{ SL(2,\mathbb{R})} \right),\nonumber \end{eqnarray}
which implies the following result:
\begin{proposition} The number of connected components of the moduli space of semistable $SL(2,\mathbb{R})$-Higgs bundles  is $
2\cdot 2^{2g}+2g-3
$. \label{coro}
\end{proposition}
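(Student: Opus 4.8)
The plan is to assemble the component count from the three pieces appearing in the stated decomposition of $\mathcal{M}_{SL(2,\mathbb{R})}$, being careful to distinguish between counting components inside $\mathcal{M}$ (as $SL(2,\mathbb{C})$-Higgs bundles) and counting intrinsic components of $\mathcal{M}_{SL(2,\mathbb{R})}$. First I would recall that by the preceding corollary the fixed-point set of $\Theta$ inside $\mathcal{M}_{reg}$ has exactly $2^{2g}+g-1$ connected components, matching the orbit count of the monodromy group on $P[2]$ from Proposition \ref{numero}; and by Propositions \ref{part1} and \ref{prop2} every semistable $SL(2,\mathbb{R})$-Higgs bundle (stable or strictly semistable) lies in a component that meets $\mathcal{M}_{reg}$, so no component of the fixed-point set is lost by restricting to the regular locus. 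Thus the $\Theta$-fixed locus in $\mathcal{M}$ has precisely $2^{2g}+g-1$ components.

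Next I would organize these $2^{2g}+g-1$ components according to the Euler class $k$ of the associated flat $\mathbb{RP}^{1}$-bundle, which ranges over $0 \le k \le g-1$ on the $\mathcal{M}$ side (since $k$ and $-k$ give isomorphic $SL(2,\mathbb{C})$-Higgs bundles via the involution interchanging the two summands). The extremal value $k = g-1$ accounts for the $2^{2g}$ Hitchin components; the value $k=0$ accounts for $1$ component (the one containing Higgs bundles with zero Higgs field, by Proposition \ref{part1}); and the intermediate values $k = 1, \dots, g-2$ account for $g-2$ components. That gives $2^{2g} + 1 + (g-2) = 2^{2g} + g - 1$, consistent with the corollary. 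The point is that the monodromy orbits are in bijection with these $k$-labelled pieces.

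Then I would pass from $\mathcal{M}$ to $\mathcal{M}_{SL(2,\mathbb{R})}$ itself: the map $\mathcal{M}_{SL(2,\mathbb{R})} \to \mathcal{M}$ is not injective on components, because a real representation and its conjugate by $\mathrm{diag}(1,-1)$ (or any orientation-reversing element) are identified in $\mathcal{M}$ but not in $\mathcal{M}_{SL(2,\mathbb{R})}$; concretely, the component $\mathcal{M}^{-k}_{SL(2,\mathbb{R})}$ maps onto $\mathcal{M}^{k}_{SL(2,\mathbb{R})} \subset \mathcal{M}$. Hence each component with $k \ne 0$ in $\mathcal{M}$ splits into two components in $\mathcal{M}_{SL(2,\mathbb{R})}$, one for $+k$ and one for $-k$, while the $k=0$ component does not split. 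Using the displayed decomposition, the total is therefore $1 + 2 \cdot 2^{2g} + 2(g-2) = 2\cdot 2^{2g} + 2g - 3$.

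The main obstacle — and the step deserving the most care — is the doubling argument: one must verify that for each nonzero $k$ the component $\mathcal{M}^{k}_{SL(2,\mathbb{R})}$ is genuinely distinct from $\mathcal{M}^{-k}_{SL(2,\mathbb{R})}$ as subsets of $\mathcal{M}_{SL(2,\mathbb{R})}$, i.e. that the sign of the Euler class is a well-defined invariant of an $SL(2,\mathbb{R})$-representation (not just its absolute value), whereas $k=0$ yields only one component. This is where the real-form subtlety (conjugate but not $SL(2,\mathbb{R})$-conjugate representations) enters, and where one invokes the Milnor--Wood inequality $-(g-1) \le k \le g-1$ together with the known fact that each strict value of $k$ is realized by a single connected component for $|k|<g-1$ and by $2^{2g}$ components for $|k| = g-1$; everything else is the bookkeeping displayed above.
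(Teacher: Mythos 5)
Your proposal is correct and follows essentially the same route as the paper: it takes the count $2^{2g}+g-1$ of monodromy orbits (equivalently, components of the $\Theta$-fixed locus in $\mathcal{M}$), organizes them by the non-negative Euler class $k$, and then doubles every $k\neq 0$ piece to account for $\mathcal{M}^{-k}_{SL(2,\mathbb{R})}$ being identified with $\mathcal{M}^{k}_{SL(2,\mathbb{R})}$ inside $\mathcal{M}$, arriving at $1+2\cdot 2^{2g}+2(g-2)=2\cdot 2^{2g}+2g-3$. The only difference is that you make explicit the justification for the doubling (the sign of the Euler class being a genuine invariant of the real representation, lost only under the non-inner conjugation), which the paper leaves implicit in its displayed decomposition.
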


\setcounter{theorem}{23}

The construction of the orbits of the monodromy group provides a decomposition of the  $4g-4$ zeros of $\det \Phi$ via the $2m$ non-zero entries in Proposition \ref{numero}.
Considering the notation of Section \ref{sec2}, the distinguished subset of zeros correspond to the points in the spectral curve $S$ where the action on the line bundle $L$ is trivial.

\section*{Acknowledgements} The author is thankful to Prof. Nigel Hitchin for suggesting this problem and for  many helpful discussions, and to S. Ramanan for pointing out a subtlety that needed attention. This work was funded by the Oxford University Press through a Clarendon Award, and by New College, Oxford.

\newpage


\begin{thebibliography}{99}

\bibitem{bobi} A. Beauville, M.S. Narasimhan and S.  Ramanan, \textit{ Spectral curves and the generalised theta divisor.} J. reine angew. Math. 398 (1989), 169-179.

\bibitem{cope1}  J. Copeland, \textit{ Monodromy of the Hitchin map over hyperelliptic curves.}   IMRN (2005), 1743-1785.

\bibitem{cor}  K. Corlette, \textit{Flat $G$-bundles with canonical metrics}, J. Differential Geom. {\bf 28} (1988), 361-382.

\bibitem{donald}  S.K. Donaldson, \textit{Twisted harmonic maps and the self-duality equations}, Proc. London Math. Soc.
(3) {\bf 55} (1987), 127-131.

\bibitem{R12} O. Garcia-Prada, \textit{Involutions of the moduli space of SL(n;C)-Higgs bundles and real forms},
In "Vector Bundles and Low Codimensional Subvarieties: State of the Art
and Recent Developments" Quaderni di Matematica, Editors: G. Casnati, F.
Catanese and R. Notari (2007). 

\bibitem{goldman2}  W.M. Goldman, \textit{Topological components of spaces of representations}, Invent. Math. \textbf{93} (1988), 557-607.

\bibitem{goand} P.B. Gothen and A. Oliveira, \textit{The singular fibre of the Hitchin map}, IMRN (2012).

\bibitem{N1} N.J.~Hitchin, \textit{The self-duality equations on a Riemann surface}. Proc. LMS 55, 3 (1987), 59-126.

\bibitem{N2} N.J.~Hitchin, \textit{Stable bundles and integrable systems},  Duke Math. J. \textbf{54, 1} (1987), 91-114.

\bibitem{N3} N.J.~Hitchin, \textit{Langlands duality and $G2$ spectral curves}, Q.J. Math., \textbf{58} (2007), 319-344.

\bibitem{laura} L.P. Schaposnik, \textit{Spectral data for G-Higgs bundles}, D. Phil. Thesis, University of Oxford, (2012) arXiv:1301.1981. [math.DG].

\bibitem{simpson88} C.T. Simpson, \textit{Constructing variations of Hodge structure using Yang-Mills theory and applications to
uniformization}, Journal of the AMS {\bf 1} (1988). 867-918.

\bibitem{simpson}  C.T. Simpson,  \textit{Higgs bundles and local systems}, Publ. Math. I.H.E.S. \textbf{75} (1992), 5–95.

\bibitem{walker}  K. Walker, \textit{Quotient groups of the fundamental groups of certain strata
of the moduli space of quadratic differentials.} Geometry and Topology, 14 (2010), 1129-1164.





\end{thebibliography}
\end{document}